\theoremstyle{plain}
\newtheorem{theorem}{Theorem}[section]
\newtheorem{corollary}[theorem]{Corollary}
\newtheorem{lemma}[theorem]{Lemma}
\newtheorem{proposition}[theorem]{Proposition}
\newtheorem{fact}[theorem]{Fact}
\newtheorem{claim}[theorem]{Claim}
\theoremstyle{definition}
\newtheorem{definition}[theorem]{Definition}
\newtheorem{remark}[theorem]{Remark}
\newtheorem{notation}[theorem]{Notation}
\theoremstyle{remark}
\newcommand{\mor}{\textup{Mor}}
\newcommand{\dom}{\operatorname{dom}}
\newcommand{\tp}{\operatorname{tp}}
\newcommand{\acl}{\operatorname{acl}}
\newcommand{\dcl}{\operatorname{dcl}}
\newcommand{\cl}{\operatorname{cl}}
\newcommand{\id}{\operatorname{id}}
\newcommand{\ov}{\overline}
\newcommand{\be}{\begin{enumerate}}
\newcommand{\ee}{\end{enumerate}}
\renewcommand{\phi}{\varphi}
\def\Ind{\setbox0=\hbox{$x$}\kern\wd0\hbox to 0pt{\hss$\mid$\hss}
\lower.9\ht0\hbox to 0pt{\hss$\smile$\hss}\kern\wd0}
\def\Notind{\setbox0=\hbox{$x$}\kern\wd0\hbox to 0pt{\mathchardef
\nn=12854\hss$\nn$\kern1.4\wd0\hss}\hbox to
0pt{\hss$\mid$\hss}\lower.9\ht0 \hbox to
0pt{\hss$\smile$\hss}\kern\wd0}
\def\ind{\mathop{\mathpalette\Ind{}}}
\newbox\noforkbox \newdimen\forklinewidth
\noforkbox\hbox{\lower 2pt\box1\lower
2pt\box0\relax}
\def\unionstick{\mathop{\copy\noforkbox}\limits}
\def\nonfork_#1{\unionstick_{\textstyle #1}}
\newbox\doesforkbox
\doesforkbox\hbox{\lower 2pt\box1 \lower
2pt\box2\lower2pt\box0\relax}
\def\nunionstick{\mathop{\copy\doesforkbox}\limits}
\def\fork_#1{\nunionstick_{\textstyle #1}}
\newcommand{\C}{\operatorname{\mathfrak{C}}}
\renewcommand{\P}{{\mathcal P}}
\newcommand{\Pm}{{\mathcal P}^{-}}
\newcommand{\Aut}{\operatorname{Aut}}
\newcommand{\bd}{\partial}
\def\cA{\mathcal{A}}
\def\cC{\mathcal{C}}
\def\bd{\partial}
\def\supp{\textup{supp}}
\def\ov{\overline}
\def\Star{\textup{Star}}
\title[Amalgamation and polygroupoids]{Type-amalgamation properties and polygroupoids in stable theories}
\author{John Goodrick}
\author{Byunghan Kim}
\author{Alexei Kolesnikov}
\address{Department of Mathematics\\ Universidad de los Andes \\
Bogot\'{a}, Colombia}
\address{Department of Mathematics\\ Yonsei University\\
134 Shinchon-dong, Seodaemun-gu\\
Seoul 120-749, South Korea}
\address{Department of Mathematics\\ Towson University, MD\\
USA}
\email{jr.goodrick427@uniandes.edu.co}
\email{bkim@yonsei.ac.kr}
\email{akolesnikov@towson.edu}
\thanks{The  second author was supported by NRF of Korea grant 2011-0021916, and Samsung
Science Technology Foundation grant BA1301-03.}
\thanks{The third author was partially supported by NSF grant DMS-0901315.}
\begin{document}

\begin{abstract}
We show that in a stable first-order theory, the failure of higher-dimensional type amalgamation can always be witnessed by algebraic structures we call \emph{$n$-ary polygroupoids}. This generalizes a result of Hrushovski in \cite{Hr} that failures of $4$-amalgamation are witnessed by definable groupoids (which correspond to $2$-ary polygroupoids in our terminology).  The $n$-ary polygroupoids are definable in a mild expansion of the language (adding a predicate for a Morley sequence).
\end{abstract}

\maketitle

\section*{Introduction and outline of results}

This paper provides a characterization of the failure of type amalga\-mation properties in a stable first-order theory $T$ in terms of definable algebraic objects. Suppose that $n$ is the smallest integer such that $T$ \textbf{fails} to have $(n+2)$-amalgamation (we explain what this means in the next paragraph). We give a method for constructing an algebraic object $\mathcal{H}$ in $T$ which we call an \emph{$n$-ary polygroupoid} that witnesses this failure of amalgamation and which is definable in a mild expansion of the language of $T$ (adding a predicate for a Morley sequence). This generalizes to higher dimensions the results of Hrushovski in \cite{Hr} relating failures of $3$-uniqueness (or $4$-amalgamation) to definable groupoids. In future work, we hope to apply these results this to construct a canonical minimal expansion $T^* \supseteq T$ to new sorts which has $n$-amalgamation for \emph{every} integer $n$ by a natural generalization of Hrushovski's method in \cite{Hr} of eliminating ``groupoid imaginaries.''

\subsection*{Background on type amalgamation}

The amalgamation properties considered in this article concern the simultaneous realization of systems of types. By a \emph{type} we mean a collection of formulas $p(\overline{x})$ with free variables from a possibly infinite tuple $\overline{x}$ which is consistent with $T$, and the systems of types which we consider are of the form 
$$\left\{p_s(\overline{x}_s) : s \subsetneq \{1, \ldots, n\} \right\} $$ where $p_s$ is the complete type of a set of $|s|$ elements which are independent over a fixed base set (which we can take to be some realization of $p_\emptyset$). The theory $T$ has $n$-amalgamation (or $n$-existence) if systems of the type above always have consistent unions, and $T$ has $n$-uniqueness if there is never more than one way to amalgamate such a system. In Subsection~1.2 below we give different-looking definitions of the amalgamation properties in terms of elementary maps between \emph{realizations} of the types $p_s$. Applications of the type-amalgamation properties in simple theories include the construction of canonical bases via $3$-amalgamation and a version of the group configuration theorem under the hypothesis of $4$-amalgamation over models (see \cite{DKM}). See also the recent work of Sustretov (\cite{sustretov}) which recasts $4$-amalgamation in terms of Morita equivalence of definable groupoids and gives some applications concerning the interpretability of non-standard Zariski structures.

It was first pointed out by Shelah that any stable $T$ has $2$-uniqueness and $3$-amalgamation, at least if we replace $T$ with the ``harmless'' canonical extension to additional sorts, $T^{eq}$ (see \cite{Sh:c}) . This is now folklore in the field of model theory. More recently, Hrushovski (in \cite{Hr}) developed an intriguing generalization of this, proving that for a stable theory for which $T = T^{eq}$, the following are equivalent: $3$-uniqueness; $4$-amalgamation; any finitary definable connected groupoid is ``eliminable'' (definably equivalent, in a category theory sense, to a group); and any finite internal cover of $T$ is almost split. Using \emph{groupoid imaginaries}, one can then construct a canonical minimal expansion of $T$ to new sorts which has $4$-amalgamation. Hrushovski posed the problem of how to generalize this to higher dimensions.

\subsection*{Summary of results}

Section~1 is devoted to defining all the type amalgamation properties we will use in this paper ($n$-existence, $n$-uniqueness, $B(n)$, and others) and reviewing some basic facts concerning these properties.

In Section~2, we define the general concepts of $n$-ary quasigroupoids and $n$-ary polygroupoids and study some of their basic properties. An $n$-ary polygroupoid is just and $n$-ary quasigroupoid which satisfies an additional ``associativity'' axiom which generalizes the associativity of composition in an ordinary groupoid; see Definition~\ref{associativity} below.

In Section~3, we prove our main result of the paper (Theorem~\ref{polygroup_definability}): in a stable theory with $(\leq n)$-uniqueness but not $(n+1)$-uniqueness, then this failure of $(n+1)$-uniqueness is witnessed by an $n$-ary polygroupoid which is ``almost definable.'' More precisely, to make one of the sorts in the polygroupoid definable, in general we would need to add a unary predicate for a Morley sequence to make one of the sorts definable, but relative to this the rest of the structure is first-order definable in the language of $T$.

Finally, in Section~4 we define a family of canonical examples of connected $n$-ary polygroupoids and study their first-order theories (which we call $T_{G,n}$, where $n \geq 2$ is an integer and $G$ is a finite abelian group).

\subsection*{Previously studied examples}

The polygroupoids described in this paper are closely related to the structure described by Hrushovski in~Example~4.7b of~\cite{tot_cat_struct} and simultaneously extend two other previously-studied families of examples. 

One such family was described by Pastori and Spiga. In \cite{PS}, they construct a family of complete theories $T_n$ (for $n \geq 2$) which are totally categorical, stable, have $k$-existence for every $k \leq n+1$, but do not have $(n+2)$-existence. For the case $n=2$, the theory $T_2$ is, up to interdefinability, exactly Hrushovski's original example of failure of $4$-existence that appeared in~\cite{DKM}.

The theories $T_n$ are (up to bi-interpretability) just the theories $T_{G, n}$ for $G = \mathbb{Z} / {2 \mathbb{Z}}$ as constructed below in Section~4, so our examples can be seen as a natural generalization of those in~\cite{PS} to an arbitrary abelian group. In the notation of~\cite{PS}, there is a basic indiscernible set $\Omega$ (what we would call ``$I$'') and sorts for $[\Omega]^n$ (all $n$-element subsets of $\Omega$) and for $[\Omega]^n \times \mathbb{F}_2$, for the ``fiber elements'' (what we call the sort $P$).

The approach of \cite{PS} is to describe the structures via their permutation groups using the machinery of modules over infinite symmetric groups. We note that it is not immediately clear how to generalize the techniques of \cite{PS} beyond the context of characteristic $2$ since a crucial lemma of that paper, Proposition~2.2, is false over fields of odd characteristic.

On the other hand, Hrushovski in~\cite{Hr} essentially showed that the theories that we call $T_{G,2}$ below (that is, the theory of a connected infinite groupoid with $\textup{Mor}(a,a) \cong G$) canonically witness the failure of $4$-existence in a stable theory. The theories we will define as $T_{G,n}$ in Section~4 below are a simultaneous generalization of these groupoids and the examples $T_n$ of \cite{PS}.

\subsection*{Notation and conventions}

Throughout this paper, $T$ will denote a complete, \textbf{multisorted} theory with elimination of imaginaries ($T = T^{eq}$) and with a monster model $\mathfrak{C} \models T$ (that is, $\mathfrak{C}$ is $\kappa$-saturated and strongly $\kappa$-homogeneous from some ``large'' cardinal $\kappa$ which is bigger than $2^{|T|}$ and any of the other sets under consideration). Unless otherwise specified, all elements and sets come from $\mathfrak{C}$.

Throughout the article, $[n]$ refers to the $n$-element set $\{1, \ldots, n\}$ (and not $\{0, \ldots, n-1\}$, as is more common in algebraic topology).

For any set $A$ and any integer $n \geq 1$, $A^{(n)}$ denotes the set of all ordered tuples from $A^n$ consisting of $n$ distinct elements. $A^{(\leq n)}$ denotes $A^{(1)} \cup \ldots \cup A^{(n)}$.

Often we will refer to finite indexed sets of elements such as $\{a_1, \ldots, a_n\}$. In this context, if $s \subseteq [n]$, then we will write $A_s$ for the set $\{ a_i : i \in s\}$ and we will write $\ov{A}_s$ for $\acl(A_s)$. The capital letters are to distinguish these from the \emph{ordered tuples} $\ov{a}_s$ which we will sometimes refer to (for example, in Definition~\ref{n_system} below).

\section{The type amalgamation properties ($n$-existence and $n$-uniqueness)}

\subsection{Definitions and basic facts}

In this subsection, we briefly review the definitions of the type amalgamation properties that we will use in this paper ($n$-existence or $n$-amalgamation, and $n$-uniqueness). This is to make the current paper more self-contained; more details can be found in \cite{gkk}. Our ``functorial'' approach to the amalgamation properties comes from Hrushovski (\cite{Hr}), and these properties were studied even earlier by researchers in simple theories (for example, in \cite{KKT}).

Everywhere below, we will assume that $T$ is a simple theory so that we have the standard well-behaved notion of independence (nonforking) from model theory.

Let $\mathcal{P}^{-}([n]) = \{s : s \subsetneq [n]\}$. If $S\subseteq \P([n])$ is closed under subsets, we view $S$ as a category, where the objects are the elements of $S$ and morphisms are the inclusion maps. Let $\cC$ be a category whose objects are all algebraically closed subsets of $\C$ and whose morphisms are the elementary maps (which are not always surjective).

\begin{definition}
An \emph{$n$-amalgamation problem} is a functor $\cA: \mathcal{P}^{-}([n]) \rightarrow \cC$. A \emph{solution} to an $n$-amalgamation problem $\cA$ is a functor $\cA': \mathcal{P}([n]) \rightarrow \cC$ that extends $\cA$.

We say that $\cA$ is an amalgamation problem \emph{over} the set $\cA(\emptyset)$ (which is also called the ``base set'').
\end{definition}

\begin{definition}
If $S$ is a subset of $[n]$ closed under subsets and $\cA:S\to \cC$ is a functor, then for $s \subseteq t \in S$, let the \emph{transition map} $$\cA_{s,t} : \cA(s) \rightarrow \cA(t)$$ be the image of the inclusion $s \subseteq t$.  Functoriality implies that $\cA_{t,u} \circ \cA_{s,t} = \cA_{s,u}$ whenever the composition is defined.
\end{definition}

\begin{notation}
If $S$ is a subset of $[n]$ closed under subsets, $\cA:S\to \cC$ is a functor, and $s\subset t\in S$, we use the symbol $\cA_t(s)$ to denote the subset $\cA_{s,t}(\cA(s))$ of $\cA(t)$.

If $\cA:\P([n])\to \cC$ is a functor, then $\cA^-$ denotes the functor $\cA\restriction \Pm([n])$.
\end{notation}

\begin{definition}
Suppose that $S$ is a subset of $\P([n])$ closed under subsets and $\cA:S\to \cC$ is a functor.

\begin{enumerate}

\item We say that $\cA$ is \emph{independent} if for every nonempty $s \in S$, $\left\{\cA_{s} (\{i\}) : i \in s \right\}$ is an $\cA_{s}(\emptyset)$-independent set.

\item We say that $\cA$ is \emph{closed} if for every nonempty $s \in S$, $$\cA(s) = \acl \left( \cup \left\{ \cA_{s} (\{i\}): i \in s \right\} \right).$$

\item We say that $\cA$ is \emph{over $B$} if $\cA(\emptyset) = B$ and every transition map $\cA_{s,t}$ fixes $B$ pointwise.
\end{enumerate}

\end{definition}

\begin{remark}
If the transition maps are all inclusions in a functor $\cA: S \rightarrow \cC$, then $\cA$ is independent if and only if for every $s, t \in \dom(\cA)$, $\cA(t) \nonfork_{\cA(t \cap u)} \cA(u)$.
\end{remark}

\begin{definition}
Two solutions $\cA'$ and $\cA''$ of the $n$-amalgamation problem $\cA$ are \emph{isomorphic} if there is an elementary map $\sigma: \cA'([n]) \rightarrow \cA''([n])$ such that for any $s \subsetneq [n]$, $$\sigma \circ \cA'_{s,[n]} = \cA''_{s,[n]}.$$ They are \emph{isomorphic over $B$} if $\cA'$, $\cA''$ are both over $B$ and the isomorphism $\sigma$ can be chosen so that it fixes $B$ pointwise.

\end{definition}

\begin{definition}
\begin{enumerate}

\item A theory $T$ has \emph{$n$-existence}, equivalently \emph{$n$-amalgamation}, if every closed independent $n$-amalgamation problem has an independent solution.

\item A theory $T$ has \emph{$n$-uniqueness} if whenever a closed independent $n$-amalgamation problem $\cA$ has closed independent solutions $\cA'$ and $\cA''$, then the solutions $\cA'$ and $\cA''$ must be isomorphic. 

\item $T$ has \emph{($< n$)-uniqueness} if it has $k$-uniqueness for every $k$ such that $2 \leq k < n$. Similarly for ($\leq n$)-uniqueness, ($< n$)-amalgamation, \emph{et cetera}.

\end{enumerate}

\end{definition}

Note that the existence of non-forking extensions of types and the independence theorem implies that \emph{any} simple theory has both 2- and 3-existence. In addition, stationarity is equivalent to 2-uniqueness property, so any stable theory has $2$-uniqueness.

Various useful equivalent definitions of $n$-existence and $n$-uniqueness can be found in \cite{gkk} and \cite{Hr}, of which we recall some of them below.

\begin{definition}
Let $T$ be a theory and $A \subseteq \mathfrak{C}$ be a small algebraically closed set.

\begin{enumerate}
\item $T$ has \emph{$n$-existence over $A$} if every closed independent $n$-amalgamation problem $\cA$ over $A$ has an independent solution.
\item $T$ has \emph{$n$-uniqueness over $A$} if every closed independent $n$-amalgamation problem over $A$ has at most one independent solution modulo isomorphism over $A$.
\end{enumerate}

\end{definition}

\begin{remark}
In considering the amalgamation properties, without loss of generality one can restrict attention to closed independent functors $\cA$ in which every transition map fixes $\cA(\emptyset)$ pointwise. Therefore it follows immediately from the definitions that $T$ has $n$-existence if and only if $T$ has $n$-existence over every algebraically closed $A$, and similarly for the uniqueness properties. However, having $n$-existence over $A$ does not necessarily imply having $n$-existence over every algebraically closed $B \supseteq A$, as observed in Remark~3.3 of \cite{ICM_homology}.
\end{remark}

\begin{proposition}
\label{over_model}
(See Section~1 of \cite{DKM}) If $T$ is stable and $M \models T$, then $T$ has $n$-existence over $M$ for every $n$.
\end{proposition}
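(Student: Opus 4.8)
The plan is to build the top object $\cA([n])$ directly as an independent amalgam, using that over a model stationarity (equivalently, $2$-uniqueness, which the stable $T$ enjoys) pins down the joint type of an $M$-independent family from the types of its members. By the Remark preceding the statement we may assume $\cA$ is over $M$ with all transition maps inclusions, so the faces $\cA(s)$ ($s\subsetneq[n]$) are genuine algebraically closed subsets of $\C$ containing $M$, nested by inclusion, and for every $s$ with $|s|\le n-1$ the family $\{\cA(\{i\}):i\in s\}$ is $M$-independent. Writing $A_i:=\cA(\{i\})$, the goal is to produce an algebraically closed $\cA([n])\supseteq M$ together with elementary maps $f_s\colon\cA(s)\to\cA([n])$ fixing $M$ and commuting with the inclusions, so that the images $f_{\{i\}}(A_i)$ are $M$-independent and $\cA([n])=\acl\bigl(\bigcup_i f_{\{i\}}(A_i)\bigr)$; the resulting functor $\cA'$ extending $\cA$ is then an independent solution. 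Note that we must be free to take the maps \emph{into} the top face to be non-inclusions: the full family $\{A_i:i\in[n]\}$ need not be $M$-independent (only its proper subfamilies are), so the independence required of a solution has to be arranged by re-placing the faces rather than by taking a literal algebraic closure of $\bigcup_i A_i$.

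First I would fix, inside $\C$, an $M$-independent family $\{B_i:i\in[n]\}$ with $B_i=\acl(M\bar b_i)$ and $\bar b_i\models\tp(A_i/M)$ (taking $\bar a_i$ an enumeration of $A_i$ and realizing its type successively with $\bar b_i\ind_M\bar b_{<i}$). Put $\cA([n]):=\acl\bigl(\bigcup_i B_i\bigr)$ and let $f_{\{i\}}\colon A_i\to B_i$ be the $M$-elementary bijection; this is the base case of an induction on $|s|$. For $2\le|s|\le n-1$, assuming coherent elementary maps $f_{s'}$ have been chosen for all $s'\subsetneq s$, their union $g$ on the boundary $\bigcup_{s'\subsetneq s}\cA(s')$ is a well-defined map (the $f_{s'}$ agree on the overlaps $\cA(s'\cap s'')$ by inductive coherence). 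Since $\cA(s)=\acl\bigl(\bigcup_{s'\subsetneq s}\cA(s')\bigr)$ for such $s$, I would extend $g$ to an elementary $f_s\colon\cA(s)\to\C$ by homogeneity of the monster, observing that its image lies in $\acl\bigl(\bigcup_{i\in s}B_i\bigr)\subseteq\cA([n])$. As $f_{\{i\}}(A_i)=B_i$ and $\{B_i:i\in[n]\}$ is $M$-independent, the solution is closed and independent.

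The crux — and the only place stability is genuinely used — is verifying that the boundary map $g$ is \emph{elementary} at each stage, so that the extension step is legitimate. This is exactly where $2$-uniqueness enters: for each $s$ with $|s|\le n-1$ the family $\{A_i:i\in s\}$ is $M$-independent and matches $\{B_i:i\in s\}$ type-by-type over $M$, so stationarity forces $\tp\bigl(\bigcup_{i\in s}A_i/M\bigr)=\tp\bigl(\bigcup_{i\in s}B_i/M\bigr)$, and more generally that $\cA(s)$ has a unique type over $M$ determined by the singleton data. I would run the same induction to show $g$ is elementary, comparing it on each face $\cA(s')$ with the restriction of such a stationarity isomorphism and invoking uniqueness of nonforking extensions to see the two agree, so that the coherently glued $g$ is itself elementary. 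The delicate point to watch is that the algebraic-closure choices made at lower faces must be propagated upward consistently, and this is precisely what the induction accomplishes by \emph{extending} the boundary map at each step rather than re-choosing a face map from scratch. In a merely simple theory stationarity fails, the gluing breaks down at the highest dimension, and indeed $n$-existence over a model can fail for large $n$; the full strength of the hypothesis that the base is a \emph{model} of a \emph{stable} theory is used exactly here.
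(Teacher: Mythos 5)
Your construction has the right skeleton (independent copies $B_i$ of the vertices, maps built up the faces by induction), but it breaks at exactly the step you call the crux: the claim that the glued boundary map $g=\bigcup_{s'\subsetneq s}f_{s'}$ is elementary does \emph{not} follow from stationarity. Stationarity does give the vertex-level statement: if $\{A_i\}_{i\in s}$ and $\{B_i\}_{i\in s}$ are $M$-independent families with $A_i\equiv_M B_i$, then the union of the individual maps is elementary, because at each step of the induction one adjoins a set independent from the domain built so far. But once $|s|\geq 3$, the maps being glued are the $f_{s'}$ with $|s'|\geq 2$, whose domains are the faces $\acl_M(\bigcup_{i\in s'}A_i)$; these domains are \emph{not} an $M$-independent family (they share vertices), so the stationarity induction simply does not apply to them. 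Your proposed fix --- comparing each $f_{s'}$ with the restriction of a single ``stationarity isomorphism'' on $\cA(s)$ --- fails for the same reason: two elementary maps on $\cA(s')$ that agree on $\bigcup_{i\in s'}A_i$ can differ by a nontrivial automorphism of $\cA(s')$ fixing its vertices pointwise, and ``uniqueness of nonforking extensions'' says nothing here, since the ambiguity lives inside algebraic closures where forking is not the issue. Whether your particular chosen $f_{s'}$ have an elementary union is precisely relative $(k,k)$-uniqueness over $M$ (Definition~\ref{rel_nk}), which the paper derives from $B(k)$ over the base (Fact~\ref{skeletal_uniqueness}) --- a strictly stronger property than $2$-uniqueness.

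There is decisive evidence that the argument as written cannot be complete: nowhere does it use that $M$ is a model rather than merely algebraically closed, since stationarity is all you invoke, and in a stable theory with $T=T^{eq}$ types over \emph{any} algebraically closed set are stationary. So your proof, if valid, would yield $n$-existence over every algebraically closed base --- contradicting Hrushovski's example from \cite{DKM} (a stable, totally categorical theory failing $4$-existence over $\acl^{eq}(\emptyset)$) and the theories $T_{G,n}$ of Section~4 of this paper (Proposition~\ref{standard_amalg}), which have $2$-uniqueness everywhere yet fail $(n+1)$-uniqueness, hence $(n+2)$-existence, over algebraically closed sets. The missing ingredient, and the only place model-ness genuinely enters, is finite satisfiability: in a stable theory, the nonforking extension of a type over $M$ is a coheir. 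This gives $B(k)$ over $M$ (Definition~\ref{Bn}) for every $k$: if $c\in\acl_M(a_1\ldots a_{k-1})$ and $c\in\dcl\bigl(\bigcup_i \acl_M(a_1\ldots\widehat{a_i}\ldots a_{k-1}a_k)\bigr)$, then the existence of the witnessing algebraic elements and of the defining function is expressed by a single formula in $\tp(a_k/Ma_1\ldots a_{k-1}c)$, which does not fork over $M$ and is therefore finitely satisfiable in $M$; replacing $a_k$ by a suitable tuple from $M$ shows $c\in\bd_M(a_1,\ldots,a_{k-1})$. With $B(k)$ over $M$ in hand, Fact~\ref{skeletal_uniqueness} licenses exactly the gluing your induction needs, and the rest of your construction goes through. (A secondary, repairable point: the Remark you cite only allows assuming the transition maps fix $M$ pointwise, not that they are inclusions; your argument should be phrased with the images $\cA_s(s')$ under transition maps, but that is cosmetic compared to the gluing gap.)
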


The next fact below is important for us in the current paper since it immediately implies that to check that $T$ has $n$-existence and $n$-uniqueness for every $n$, it suffices just to check the $n$-uniqueness properties:

\begin{proposition}
\label{uniq_exist}
Let $T$ be stable with ($< n$)-uniqueness (for some $n \geq 3$). Then $T$ has $n$-uniqueness if and only if $T$ has ($n+1$)-amalgamation.
\end{proposition}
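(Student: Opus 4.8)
The plan is to prove the two implications separately, using throughout the two facts already available to us: that a stable theory has $2$-uniqueness (stationarity), and that by Proposition~\ref{over_model} it has $k$-existence over every model for every $k$. As usual we may assume every amalgamation problem below is over a fixed base $B = \cA(\emptyset)$ with all transition maps inclusions, so that a closed independent $k$-amalgamation problem is just a coherent family of types of an independent, $\acl$-closed configuration $(\ov A_i)_{i \in s}$, with $s$ ranging over proper subsets of $[k]$, and a solution is a single type of the full configuration $(\ov A_i)_{i \in [k]}$ refining the prescribed proper faces. In this language $n$-uniqueness amounts to the assertion (generalizing stationarity, the case $n=2$) that the type over $B$ of such a full configuration is determined by the types of its proper faces.

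First I would show that $n$-uniqueness implies $(n+1)$-amalgamation. Given a closed independent $(n+1)$-amalgamation problem, realize its top proper face, obtaining an independent configuration $(\ov A_1, \ldots, \ov A_n)$ of the prescribed type, and note that solving the problem amounts to adjoining one further vertex $\ov A_{n+1}$ whose type over each proper face $\ov A_t = \acl\big(\bigcup_{i \in t}\ov A_i\big)$, $t \subsetneq [n]$, is prescribed and coherent. The idea is to produce such an $\ov A_{n+1}$ by amalgamating its prescribed types over the maximal proper faces $\ov A_{[n]\setminus\{j\}}$ using the independence theorem, and then to check that the resulting type over $\ov A_{[n]}$ is well defined. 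The place where $n$-uniqueness does the real work is this coherence (cocycle) condition: a priori the amalgamation of $\ov A_{n+1}$ performed relative to two different maximal faces could yield different types over their common refinement, and $n$-uniqueness, applied to the $n$-element configurations obtained by adjoining $\ov A_{n+1}$, forces these to agree. Equivalently, one may pass to a model $M \supseteq B$ independent from the configuration over $B$; over $M$ the solution exists for free by Proposition~\ref{over_model}, and $n$-uniqueness is exactly what makes that solution canonical, hence independent of $M$ over $B$ and so descendable to a genuine solution over $B$.

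Conversely I would show that $(n+1)$-amalgamation implies $n$-uniqueness, arguing contrapositively. Suppose $n$-uniqueness fails, witnessed by two closed independent solutions of a common closed independent $n$-amalgamation problem which agree on all proper faces but are not isomorphic over $B$. I would assemble these into a single $(n+1)$-amalgamation problem: keep one solution on the face $[n] = \{1, \ldots, n\}$, place a relabeled copy of the second solution on the face $\{1, \ldots, n-1, n+1\}$ so that the two share the common face on $\{1, \ldots, n-1\}$, and fill in every proper face containing both $n$ and $n+1$ by the free (independent) amalgam; here $(<n)$-uniqueness guarantees that the lower-dimensional faces are pinned down canonically, so that the assignment is coherent and the resulting functor is a bona fide closed independent $(n+1)$-amalgamation problem. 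A solution, which exists by hypothesis, embeds both tops over their shared boundary into one configuration and thereby supplies the boundary-fixing elementary identification that $n$-uniqueness demanded, a contradiction.

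The main obstacle is the forward direction: verifying the coherence of the constructed last vertex. Over a model everything amalgamates by Proposition~\ref{over_model}, so the entire content of $(n+1)$-amalgamation is the descent to an algebraically closed base that need not be a model, and the technical heart is showing that $n$-uniqueness (together with $(<n)$-uniqueness controlling the faces) makes the solution over $M$ canonical enough to descend. In the contrapositive direction the analogous delicate point is checking that the composite $(n+1)$-problem is genuinely independent at the new faces built by free amalgamation; this is routine given $(<n)$-uniqueness but must be carried out with care.
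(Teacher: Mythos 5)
Before anything else: the paper itself gives no argument for Proposition~\ref{uniq_exist}; it defers to Section~3 of \cite{gkk} and to 4.1 of \cite{Hr}, so your proposal must be judged against those proofs, and against them both of your directions have genuine gaps. In the forward direction, neither of your two sketches contains the step where the hypothesis actually does its work. The ``independence theorem plus cocycle condition'' version cannot be run: the independence theorem amalgamates two types over bases that are independent over their intersection, and while the first two maximal faces $\ov{A}_{[n]\setminus\{i\}}$ and $\ov{A}_{[n]\setminus\{j\}}$ are indeed independent over $\ov{A}_{[n]\setminus\{i,j\}}$, after that single amalgamation the type of the new vertex lives over $\ov{A}_{[n]}$, which \emph{contains} every remaining maximal face, so the theorem can never be applied again; amalgamating all $n$ prescribed types at once is exactly $(n+1)$-existence, which is circular. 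The ``pass to a model $M$'' version misplaces where uniqueness enters: once a lift of the problem to $M$ exists, restricting any solution over $M$ to the copies of the original faces is automatically a solution over $B$, so descent needs no canonicity at all. The real work is in constructing the lifted problem over $M$ in the first place, i.e.\ embedding each face $\cA(s)$ into an $M$-face compatibly with all transition maps; this is done face by face and uses $|s|$-uniqueness for every $|s| \le n$. (Alternatively, in the argument of \cite{Hr}, one solves the link problem of the vertex $n+1$ over the base $\cA(\{n+1\})$ using $n$-existence --- available by induction from $(<n)$-uniqueness --- and then invokes $n$-uniqueness exactly once, to match the $[n]$-face of that link solution with the prescribed $\cA([n])$.) Neither sketch identifies this step.

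In the converse direction, the construction of the problem $\mathcal{B}$ (the two rival solutions placed on the faces $[n]$ and $[n-1]\cup\{n+1\}$, free amalgams on faces containing both $n$ and $n+1$) is natural, and making $\mathcal{B}$ into a coherent closed independent functor is indeed routine from stationarity. The gap is your final claim that a solution of $\mathcal{B}$ ``thereby supplies'' the isomorphism of $\cA'$ with $\cA''$. What a solution supplies is a coherent family of canonical elementary identifications: the identity on $\acl(\ov{B}_{[n-1]})$ together with, for each $t \subsetneq [n-1]$, a map $\theta_t : \acl(\ov{B}_{t\cup\{n\}}) \rightarrow \acl(\ov{B}_{t\cup\{n+1\}})$ coming from the free faces. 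Any two of these maps have elementary union (by stationarity), but an isomorphism of solutions must be an elementary map of the tops extending \emph{all} of them simultaneously, and the elementarity of that full union is precisely a relative $(n,n)$-uniqueness statement (Definition~\ref{rel_nk}), which by Fact~\ref{skeletal_uniqueness} and Fact~\ref{Bn_uniqueness} is the content of $B(n)$, i.e.\ of $n$-uniqueness itself --- the very thing being proved. Concretely, gluing two of the maps and extending to the monster yields an elementary map of the tops that respects the identifications on all but one maximal face and is off by an automorphism of that face over its boundary; showing that this twist can be killed is the entire theorem, and nothing in your configuration forces it. This is why \cite{gkk} runs this direction through $B(n)$ instead: from a failure of $B(n)$ (an element $c$ in the relevant intersection but outside $\bd$) one builds an $(n+1)$-problem whose transition maps are twisted by an automorphism moving $c$ while fixing $\bd$, and its unsolvability follows from a $\dcl$/$\bd$ computation rather than from any gluing claim.
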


\begin{proof}
Mostly, this is straightforward diagram-chasing. See Section~$3$ of \cite{gkk} for a proof, or 4.1 of \cite{Hr} for the left-to-right direction.
\end{proof}

A stable theory may fail $3$-uniqueness, or have ($< n$)-uniqueness but not $n$-uniqueness for any $n \geq 3$; examples of this were given in \cite{PS}, and in Section~4 we will give a more general class of examples via polygroupoids.

Finally, we recall some variants of the amalgamation properties which will be useful to us later. For further discussion and proofs of the facts below, see \cite{gkk}.

\begin{definition}
\label{tuple_bd}
Given a tuple $\overline{a} = (a_1, \ldots, a_k)$ and a set $B$, let $$\bd_B(\overline{a}) := \dcl \left( \bigcup_{i \in \{1, \ldots, k\}} \acl_B(A_{[k] \setminus \{i\}}) \right).$$ If $B = \emptyset$, we omit it.
\end{definition}

\begin{definition}
\label{Bn}
If $A \subseteq \mathfrak{C}$, we say that \emph{$B(n)$ holds over $A$} if for every $A$-independent set $\{a_1, \ldots, a_n\}$, $$\bd_{A \cup \{a_n\}} (a_1, \ldots, a_{n-1}) \cap \acl_A(a_1, \ldots, a_{n-1})= \bd_A(a_1, \ldots, a_{n-1}).$$ The theory $T$ \emph{has $B(n)$} if it has $B(n)$ over every $A \subseteq \mathcal{C}$.
\end{definition}

\begin{fact}
\label{Bn_uniqueness}
(See Proposition~3.14 of \cite{gkk}) If $T$ is stable and has $(n-1)$-uniqueness, then $T$ has $n$-uniqueness if and only if $T$ has $B(n)$. In particular, for stable $T$, $B(3)$ is equivalent to $3$-uniqueness (since any stable theory has $2$-uniqueness).
\end{fact}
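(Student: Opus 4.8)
The plan is to establish both implications over a fixed algebraically closed base $A$ and then lift to the global statements by the Remark preceding Proposition~\ref{over_model}. Fix an $A$-independent set $\{a_1,\ldots,a_n\}$ and attach to it the canonical closed independent $n$-amalgamation problem $\cA$ on $\Pm([n])$ given by $s\mapsto \acl_A(A_s)$ with inclusions as transition maps; a closed independent solution is then exactly a copy of $\acl_A(a_1,\ldots,a_n)$ extending the boundary $\bigcup_{i\in[n]}\acl_A(A_{[n]\setminus\{i\}})$. The first step is to use $(n-1)$-uniqueness to make the boundary rigid: each $(n-1)$-dimensional face is a unique $(n-1)$-amalgam, and these glue compatibly along their common subfaces, so any two solutions can be canonically identified on $\bd_A(a_1,\ldots,a_n)$. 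With this in hand, $n$-uniqueness over $A$ reduces to the assertion that the resulting boundary identification extends in \emph{at most one} way to an elementary map on $\acl_A(a_1,\ldots,a_n)$.

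The core of the argument is a localization lemma that measures the failure of unique extension on a single coordinate. Peeling off $a_n$ and working over $A\cup\{a_n\}$, the set $\{a_1,\ldots,a_{n-1}\}$ is $(Aa_n)$-independent and $\acl_A(a_1,\ldots,a_n)$ is assembled from $\acl_A(a_1,\ldots,a_{n-1})$ together with the faces containing $a_n$. I would show that the ambiguity in extending the boundary map is governed precisely by the overlap
$$\bd_{A a_n}(a_1,\ldots,a_{n-1})\,\cap\,\acl_A(a_1,\ldots,a_{n-1}),$$
and that the extension is forced — hence unique — exactly when this overlap collapses to $\bd_A(a_1,\ldots,a_{n-1})$, which is the condition $B(n)$ over $A$. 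The two inclusions run in opposite directions: $\bd_A(a_1,\ldots,a_{n-1})$ is always contained in the overlap, while any element of the overlap strictly outside $\bd_A(a_1,\ldots,a_{n-1})$ is algebraic but not definable over the relevant boundary, so it admits a genuine conjugate and can be transported to produce a second, non-isomorphic solution. Stability is used here through stationarity (that is, $2$-uniqueness) and through Proposition~\ref{over_model}: these guarantee the existence and stationarity of the independent amalgams over $Aa_n$ and over $A$, which is what lets us identify ``definable over the boundary'' with ``fixed by every boundary-preserving automorphism.''

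The step I expect to be the main obstacle is the reconciliation of symmetry. The property $n$-uniqueness is symmetric in $a_1,\ldots,a_n$, whereas $B(n)$ and the localization above single out the last coordinate $a_n$, so the delicate point is to argue that controlling this one distinguished overlap suffices to control the full, symmetric extension problem. I would handle this by exploiting $(n-1)$-uniqueness to permute the roles of the coordinates on the boundary, transferring control from the $a_n$-face to all faces and thereby upgrading the asymmetric collapse to the symmetric uniqueness of the top amalgam. Finally, the ``in particular'' clause is immediate: every stable theory has $2$-uniqueness by stationarity, so the hypothesis of $(n-1)$-uniqueness is automatically met at $n=3$, and the equivalence specializes to $B(3)\iff 3$-uniqueness.
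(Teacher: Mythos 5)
First, a point of reference: the paper does not prove this statement at all --- it is quoted as a Fact with a citation to Proposition~3.14 of \cite{gkk} --- so your proposal has to stand on its own, and it contains two genuine errors. The first is your opening ``boundary rigidity'' step. The identification of two solutions on each $(n-1)$-dimensional face is not something $(n-1)$-uniqueness provides; it is \emph{forced} by the definition of a solution ($\cA'_{s,[n]}(x)\mapsto\cA''_{s,[n]}(x)$), and the entire content of $n$-uniqueness is whether the union of these forced identifications over all $n$ faces is an \emph{elementary} map (if it is, it extends to the algebraic closure by saturation, and the solutions are isomorphic). So if $(n-1)$-uniqueness really gave a canonical elementary identification on all of $\bd_A(a_1,\ldots,a_n)$, then $n$-uniqueness would follow from $(n-1)$-uniqueness alone, with no need for $B(n)$; this is false, since stable theories always have $2$-uniqueness yet can fail $3$-uniqueness (e.g.\ the theories $T_{G,2}$ of Section~4). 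What $(n-1)$-uniqueness actually buys is weaker and asymmetric: the faces containing $a_n$ form a closed independent $(n-1)$-amalgamation problem over the enlarged base $\acl_A(a_n)$, and $(n-1)$-uniqueness \emph{over that base} (this is where the hypothesis of uniqueness over every set is used) aligns the two solutions on those faces only, so that the residual discrepancy is a single twist $\varphi\in\Aut(\acl_A(a_1,\ldots,a_{n-1})/\bd_A(a_1,\ldots,a_{n-1}))$ on the one face not containing $a_n$. This also dissolves your ``symmetry'' worry: no transfer of control between faces is needed once the reduction is set up this way.

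The second error is the reduction of $n$-uniqueness to the boundary identification extending ``in at most one way.'' That is the wrong condition: $n$-uniqueness asserts that an elementary extension \emph{exists}, and the number of extensions is irrelevant. It is neither necessary (a theory can have $n$-uniqueness while $\acl_A(a_1,\ldots,a_n)$ carries many automorphisms fixing $\bd_A(a_1,\ldots,a_n)$ pointwise --- e.g.\ a ``free'' ternary cover, a sort of pairs $([I]^3\times\mathbb{Z}/2\mathbb{Z})$ with projection and group action but no $Q$-relation, has $3$-uniqueness and $B(3)$ yet nontrivial such automorphisms) nor sufficient (when the forced identification fails to be elementary there are \emph{no} elementary extensions, so ``at most one'' holds vacuously while $n$-uniqueness fails, as in Hrushovski's groupoid examples). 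Finally, even granting the correct reduction, your treatment of the hard direction is a bare assertion: knowing that $\varphi$ fixes $\bd_{A\cup\{a_n\}}(a_1,\ldots,a_{n-1})\cap\acl_A(a_1,\ldots,a_{n-1})$ pointwise (which is what $B(n)$ gives) does not by itself produce an extension of $\varphi$ to an automorphism of $\C$ fixing $\bd_{A\cup\{a_n\}}(a_1,\ldots,a_{n-1})$; one must prove that $\varphi\cup\id$ on that boundary is elementary. That implication is precisely the equivalence recorded as Fact~\ref{Bn_equivalent} (Lemma~3.3 of \cite{gkk}); it uses stability in an essential way and is the real mathematical content here, which your sketch treats as automatic. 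Your easy direction --- an element of the overlap outside $\bd_A(a_1,\ldots,a_{n-1})$ has a conjugate over $\bd_A(a_1,\ldots,a_{n-1})$, and twisting the one face by a map moving it yields two non-isomorphic solutions --- is correct.
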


We recall an alternative formulation of the property $B(n)$ established in~\cite{gkk}, Lemma 3.3 (with slight changes in the notation for consistency). Here and throughout, ``$\Aut(C / B)$'' denotes the set of all restrictions to $C \subseteq \mathfrak{C}$ of elementary maps $\varphi \in \Aut(\mathfrak{C}/B)$ which map $C$ onto $C$.

\begin{fact}
\label{Bn_equivalent}
In any simple theory $T$ and $n \geq 2$, the following are equivalent:
\begin{enumerate}
\item $T$ has $B(n+1)$ over the set $B$;

\item For any $B$-independent set $\{c_1, \ldots, c_{n+1}\}$, any map $$\varphi \in \Aut(\acl_B(c_1, \ldots, c_n) / \bd_B(c_1, \ldots, c_n))$$ can be extended to some $\tilde{\varphi} \in \Aut(\C / \bd_{B \cup \{c_{n+1}\}}(c_1, \ldots, c_n))$.

\end{enumerate}

\end{fact}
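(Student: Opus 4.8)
We need to prove Fact~\ref{Bn_equivalent}: the equivalence of $B(n+1)$ over $B$ with the automorphism-extension property (2). Since both conditions are relative to a fixed base $B$, the plan is to work over $B$ throughout and abbreviate $\acl_B$, $\bd_B$, etc. The essential content is that the property $B(n+1)$, which by Definition~\ref{Bn} is the equation $\bd_{B \cup \{c_{n+1}\}}(c_1,\ldots,c_n) \cap \acl_B(c_1,\ldots,c_n) = \bd_B(c_1,\ldots,c_n)$ about intersections of algebraically and definably closed sets, can be translated into a statement about which automorphisms of $\acl_B(c_1,\ldots,c_n)$ survive when we enlarge the set over which they fix points to include $c_{n+1}$.

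\emph{From (1) to (2).} Assume $B(n+1)$ holds over $B$ and let $\varphi \in \Aut(\acl_B(\ov{c}) / \bd_B(\ov{c}))$ be given, where $\ov{c} = (c_1,\ldots,c_n)$. The plan is to show $\varphi$ in fact fixes $\bd_{B \cup \{c_{n+1}\}}(\ov{c})$ pointwise, \emph{after intersecting with} $\acl_B(\ov{c})$; by the $B(n+1)$ equation this intersection is exactly $\bd_B(\ov{c})$, which $\varphi$ already fixes. Once we know the elementary map $\varphi$ fixes $\bd_{B \cup \{c_{n+1}\}}(\ov{c})$ pointwise, it is an elementary map over that set, and by saturation and strong homogeneity of $\C$ we extend it to some $\tilde{\varphi} \in \Aut(\C / \bd_{B \cup \{c_{n+1}\}}(\ov{c}))$. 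The delicate point is verifying that $\varphi$, defined only on $\acl_B(\ov{c})$, behaves correctly relative to the larger closure $\bd_{B\cup\{c_{n+1}\}}(\ov{c})$; one argues that any element of the latter that lies in the domain $\acl_B(\ov{c})$ must lie in the intersection, hence in $\bd_B(\ov{c})$.

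\emph{From (2) to (1).} For the converse we prove the contrapositive, or argue directly that the extension property forces the intersection to be no larger than $\bd_B(\ov{c})$. The inclusion $\bd_B(\ov{c}) \subseteq \bd_{B\cup\{c_{n+1}\}}(\ov{c}) \cap \acl_B(\ov{c})$ is immediate from monotonicity of $\bd$, so only the reverse inclusion requires the hypothesis. Suppose toward a contradiction some $d \in \bd_{B\cup\{c_{n+1}\}}(\ov{c}) \cap \acl_B(\ov{c})$ lies outside $\bd_B(\ov{c})$; then $d$ is not fixed by all of $\Aut(\acl_B(\ov{c}) / \bd_B(\ov{c}))$, so choose $\varphi$ in that group moving $d$. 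Extending $\varphi$ via (2) to $\tilde\varphi$ fixing $\bd_{B\cup\{c_{n+1}\}}(\ov{c})$ gives a map fixing $d$ (since $d$ lies in that set) yet agreeing with $\varphi$ on $\acl_B(\ov{c})$, contradicting $\varphi(d) \neq d$.

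\emph{Main obstacle.} The technical heart, and where I expect the real work to lie, is the direction from (1) to (2): controlling the interaction between the two distinct closure operators $\bd_B$ and $\bd_{B\cup\{c_{n+1}\}}$ and justifying that fixing $\bd_B(\ov{c})$ pointwise is enough to deduce, via the $B(n+1)$ equation, that $\varphi$ respects the boundary computed over the enlarged base. One must be careful that $\bd_{B\cup\{c_{n+1}\}}(\ov{c})$ need not be contained in $\acl_B(\ov{c})$, so $\varphi$ is genuinely only partially defined on it, and the argument should isolate precisely the part lying in the common domain. The stability (or even just simplicity) hypothesis enters through the standard extension and homogeneity facts for elementary maps in $\C$, and possibly through properties of $\dcl$ and $\acl$ within $T^{eq}$; I would invoke these as black boxes rather than reprove them.
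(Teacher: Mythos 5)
Your direction (2)$\Rightarrow$(1) is essentially correct: the inclusion $\bd_B(\ov{c})\subseteq \bd_{B\cup\{c_{n+1}\}}(\ov{c})\cap\acl_B(\ov{c})$ is automatic, and your contradiction argument for the reverse inclusion works once you add the (standard, but needed) justification that any $d\in\acl_B(\ov{c})\setminus\bd_B(\ov{c})$ is moved by some member of $\Aut(\acl_B(\ov{c})/\bd_B(\ov{c}))$: since $n\geq 2$, the set $\bd_B(\ov{c})$ is $\dcl$-closed and contains $B\cup\{c_1,\ldots,c_n\}$, so $\acl(\bd_B(\ov{c}))=\acl_B(\ov{c})$, and any automorphism of $\mathfrak{C}$ over $\bd_B(\ov{c})$ moving $d$ restricts to the required map. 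The genuine gap is in (1)$\Rightarrow$(2), exactly at the point you flag as the ``main obstacle.'' Writing $W=\bd_{B\cup\{c_{n+1}\}}(\ov{c})$, your plan is: $\varphi$ fixes $W\cap\acl_B(\ov{c})=\bd_B(\ov{c})$ pointwise (true, by $B(n+1)$), ``hence $\varphi$ is an elementary map over $W$ and extends by homogeneity.'' That inference is unsupported and is false as a general principle. What must be proved is that the union $\varphi\cup\id_W$ is elementary, i.e.\ that $\tp(\ov{d}/W)=\tp(\varphi(\ov{d})/W)$ for every finite tuple $\ov{d}$ from $\acl_B(\ov{c})$; the hypothesis that $\varphi$ and $\id_W$ agree on the intersection of their domains does not give this. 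Statements of the form ``compatible elementary maps have an elementary union'' are precisely the nontrivial content of this circle of ideas (they are the relative $(n,k)$-uniqueness properties, which in the stable case are \emph{derived from} $B(n)$ by a real argument, Fact~\ref{skeletal_uniqueness}); in a merely simple theory even the two-domain version fails, by failure of stationarity. So taking this step for granted makes the argument circular, and it is where the whole difficulty of the lemma lives.

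The missing idea (this is how Lemma~3.3 of \cite{gkk}, which the paper cites instead of proving the Fact, handles it) is a finite-conjugate-set and code argument. Fix a finite tuple $\ov{d}$ from $\acl_B(\ov{c})$, let $S$ be the set of realizations of $\tp(\ov{d}/\bd_B(\ov{c}))$ (finite, since $\ov{d}$ is algebraic over $\bd_B(\ov{c})$, and contained in $\acl_B(\ov{c})$), and let $S_0\subseteq S$ be the set of those realizations $\ov{d}'$ with $\tp(\ov{d}'/W)=\tp(\ov{d}/W)$. Every $\alpha\in\Aut(\C/W)$ fixes $\bd_B(\ov{c})\subseteq W$ pointwise, hence permutes $S$, and fixes $S_0$ setwise (as $\alpha(\ov{d}')\equiv_W\ov{d}'$ for each $\ov{d}'$); therefore the code of the finite set $S_0$ lies in $\dcl(W)=W$. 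That code also lies in $\acl_B(\ov{c})$, so by $B(n+1)$ it lies in $W\cap\acl_B(\ov{c})=\bd_B(\ov{c})$ and is consequently fixed by $\varphi$. Any extension of $\varphi$ to an automorphism of $\mathfrak{C}$ then permutes $S$ and fixes $S_0$ setwise, so $\varphi(\ov{d})\in S_0$, i.e.\ $\tp(\varphi(\ov{d})/W)=\tp(\ov{d}/W)$. Since $\ov{d}$ was an arbitrary finite tuple, $\varphi\cup\id_W$ is elementary by finite character, and only now does homogeneity of $\C$ produce the desired $\tilde{\varphi}\in\Aut(\C/W)$. Without an argument of this kind, your passage from ``fixes the intersection'' to ``extends over $W$'' does not go through.
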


\begin{definition}
\label{rel_nk}
If $A \subseteq \mathfrak{C}$ and $k \geq n$, then $T$ has \emph{relative $(n,k)$-uniqueness over $A$} if for every $A$-independent collection of algebraically closed sets $a_1, \ldots, a_k$ from $\mathfrak{C}^{eq}$, each containing $A$, and for any collection of functions $\{\varphi_u : u \subset_{n-1} \{1, \ldots, k\} \}$ such that:

\begin{enumerate}
\item $\varphi_u$ is an elementary map from $\acl_A(\overline{a}_u)$ onto itself; and
\item for any $v \subsetneq u$, $\varphi_u$ fixes $\acl_A(\overline{a}_v)$ pointwise, 
\end{enumerate}

then the union $$\bigcup_{u \subset_{n-1} \{1, \ldots, k\}} \varphi_u$$ is an elementary map.

\end{definition}

\begin{fact}
\label{skeletal_uniqueness}
(See Lemma~4.4 of \cite{gkk}) If $T$ is stable and has $B(n)$ over $A$ (for $n \geq 3$), then $T$ has relative $(n,k)$-uniqueness over $A$ for every $k \geq n$. In particular, if $T$ is stable and has both $(n-1)$-uniqueness and $n$-uniqueness over $A$, then $T$ has relative $(n,k)$-uniqueness over $A$.
\end{fact}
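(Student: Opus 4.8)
The plan is to prove relative $(n,k)$-uniqueness by induction on $k \geq n$, reducing the gluing problem to a statement about extending a single face-automorphism to the whole monster. Throughout, write $\Phi = \bigcup_{u \subset_{n-1} [k]} \varphi_u$ for the union we must show is elementary, with domain $D = \bigcup_u \acl_A(\overline{a}_u)$. First I would dispose of well-definedness: for distinct $u, u'$ of size $n-1$, stability (equivalently $2$-uniqueness) gives $\acl_A(\overline{a}_u) \cap \acl_A(\overline{a}_{u'}) = \acl_A(\overline{a}_{u \cap u'})$, and since $u \cap u' \subsetneq u, u'$, both $\varphi_u$ and $\varphi_{u'}$ restrict to the identity there by clause (2) of Definition~\ref{rel_nk}. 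Hence the maps agree on overlaps and $\Phi$ is a genuine function on $D$.

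The main reduction is the following \emph{globalization} claim: for each $u \subset_{n-1} [k]$ and each $\varphi_u \in \Aut(\acl_A(\overline{a}_u) / \bd_A(\overline{a}_u))$ there is $\widetilde{\varphi}_u \in \Aut(\C / A)$ that agrees with $\varphi_u$ on $\acl_A(\overline{a}_u)$ and fixes $\acl_A(\overline{a}_{u'})$ pointwise for every other $(n-1)$-face $u'$. Clause (2) guarantees that each $\varphi_u$ fixes $\bd_A(\overline{a}_u) = \dcl(\bigcup_{v \subsetneq u} \acl_A(\overline{a}_v))$ pointwise, so the hypothesis of the claim is always met. Granting globalization, $\Phi$ is elementary by a composition argument: choosing an enumeration $u^1, \ldots, u^N$ of the $(n-1)$-faces and setting $\Psi = \widetilde{\varphi}_{u^1} \circ \cdots \circ \widetilde{\varphi}_{u^N} \in \Aut(\C / A)$, one checks that for $x \in \acl_A(\overline{a}_{u^j})$ every factor except $\widetilde{\varphi}_{u^j}$ acts as the identity on $x$ (and on $\varphi_{u^j}(x)$, which lies back in the same face), so $\Psi$ restricts to $\varphi_{u^j}$ there; thus $\Psi$ extends $\Phi$, which is therefore elementary.

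It remains to prove globalization, which I would do by induction on $k$. The base case $k = n$ is exactly the extension form of $B(n)$: writing $u = [n] \setminus \{i\}$, every other $(n-1)$-face has the form $(u \setminus \{j\}) \cup \{i\}$, and $\bd_{A \cup \{a_i\}}(\overline{a}_u) = \dcl(\bigcup_{j \in u} \acl_A(\overline{a}_{(u \setminus \{j\}) \cup \{i\}}))$ is the definable closure of the union of precisely these faces; so Fact~\ref{Bn_equivalent}, applied with $n$ there replaced by $n-1$, extends $\varphi_u$ to an automorphism of $\C$ fixing all of them, which is the desired $\widetilde{\varphi}_u$. For the inductive step one may assume $k \notin u$; applying globalization at level $k-1$ to $a_1, \ldots, a_{k-1}$ produces an extension of $\varphi_u$ fixing every $(n-1)$-face contained in $[k-1]$, and one then corrects it, one new coordinate at a time, by further applications of the extension form of $B(n)$ so as to force the faces containing $k$ to be fixed as well.

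I expect the inductive step to be the main obstacle. The successive corrections must be arranged so that fixing the faces through the newly added coordinate does not disturb the faces already fixed, and verifying this compatibility on \emph{all} faces is delicate: unlike the case $k = n$, for $k > n$ there are $(n-1)$-faces meeting $u$ in fewer than $n-2$ points, which the single-step extension form of $B(n)$ does not reach directly, so these must be handled through the interaction of several steps. Here one leans on the extension form of $B(n)$ together with the stable intersection property $\acl_A(\overline{a}_u) \cap \acl_A(\overline{a}_{u'}) = \acl_A(\overline{a}_{u \cap u'})$ to keep track of exactly which algebraic closures are being fixed. Finally, the ``in particular'' clause is immediate: by Fact~\ref{Bn_uniqueness}, stability together with $(n-1)$-uniqueness makes $n$-uniqueness equivalent to $B(n)$, so $n$-uniqueness over $A$ supplies the $B(n)$ hypothesis of the first assertion.
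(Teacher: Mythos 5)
Your overall architecture is sound, and three of its four pieces are complete and correct: the overlap/well-definedness argument via $\acl_A(\overline{a}_u) \cap \acl_A(\overline{a}_{u'}) = \acl_A(\overline{a}_{u \cap u'})$, the composition argument showing that globalization implies the result, and the base case $k = n$ of globalization via the extension form of $B(n)$ (Fact~\ref{Bn_equivalent}). (Note that the paper itself gives no proof of Fact~\ref{skeletal_uniqueness}; it is quoted from Lemma~4.4 of \cite{gkk}, so there is no argument in the text to compare against.) The genuine gap is exactly where you flag it: the inductive step of globalization is described but never proved, and as set up it does not go through easily. Each ``correction'' aimed at a face through the new coordinate $k$ comes from another application of Fact~\ref{Bn_equivalent}, which only guarantees that the specific boundary occurring in \emph{that} application is fixed; you have no control over what the correction does to the faces fixed at earlier stages, nor over whether it destroys the agreement with $\varphi_u$ on $\acl_A(\overline{a}_u)$ itself. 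Your observation that some $(n-1)$-faces meet $u$ in fewer than $n-2$ points and so are ``not reached'' by a single application is a symptom of the real problem: you are applying $B(n)$ only to the original vertices $a_1, \ldots, a_k$.

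The missing idea is that no induction on $k$ is needed, because $B(n)$ over $A$ --- as this paper uses it, e.g.\ already in Definition~\ref{rel_nk}, which is stated for algebraically closed sets --- quantifies over $A$-independent families of \emph{algebraically closed sets}, not just singletons. Given $u = \{i_1, \ldots, i_{n-1}\}$, group the entire complement into one vertex $c := \acl_A\bigl(\bigcup_{j \in [k] \setminus u} a_j\bigr)$. Then $a_{i_1}, \ldots, a_{i_{n-1}}, c$ are $n$ $A$-independent algebraically closed sets (grouping preserves nonforking independence), $\varphi_u$ fixes $\bd_A(\overline{a}_u)$ pointwise by clause (2) of Definition~\ref{rel_nk}, and the extension form of $B(n)$ (Fact~\ref{Bn_equivalent} with its index shifted down by one, legitimate since $n \geq 3$) extends $\varphi_u$ to some $\widetilde{\varphi}_u \in \Aut\bigl(\mathfrak{C} / \bd_{A \cup \{c\}}(\overline{a}_u)\bigr)$. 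Now if $u' \neq u$ is any other $(n-1)$-face of $[k]$, pick $j \in u \setminus u'$; then $u' \subseteq [k] \setminus \{j\}$, so $\acl_A(\overline{a}_{u'}) \subseteq \acl_A(\overline{a}_{[k] \setminus \{j\}}) = \acl_{A \cup \{c\}}(\overline{a}_{u \setminus \{j\}}) \subseteq \bd_{A \cup \{c\}}(\overline{a}_u)$, hence $\widetilde{\varphi}_u$ fixes it pointwise. This proves your globalization claim for every $k \geq n$ in a single stroke, and your composition argument then finishes the proof; in particular, your worry about faces far from $u$ evaporates, since every face $u' \neq u$ omits some $j \in u$ and is therefore swallowed by the grouped boundary. (Your treatment of the ``in particular'' clause via Fact~\ref{Bn_uniqueness} is fine, modulo the routine localization of that fact over $A$.)
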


\begin{lemma}
\label{skeletal_maps}
Suppose that $T$ is stable and has $(\leq n)$-uniqueness. Suppose, moreover, that we are given two independent sets $\{a_1, \ldots, a_k\}$ and $\{b_1, \ldots, b_k\}$ and a system of elementary bijections $$\varphi_s: \ov{A}_s \rightarrow \ov{B}_s$$ indexed by all the $s \subseteq [k]$ with size at most $n-1$ (recalling the notational convention that $\ov{A}_s := \acl(\{a_i : i \in s\})$), and that $t \subseteq s$ implies $\varphi_s$ extends $\varphi_t$. Then there is a single elementary map $$\varphi: \ov{A}_{[k]} \rightarrow \ov{B}_{[k]}$$ which extends every map $\varphi_s$.
\end{lemma}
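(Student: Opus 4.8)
The plan is to induct on the dimension $m$, constructing at each stage a single elementary bijection $\varphi^{(m)} \colon \ov{A}_{[k]} \to \ov{B}_{[k]}$ that extends $\varphi_s$ for every $s$ with $|s| \le m$; the final map $\varphi^{(n-1)}$ is then the one we want. We may assume $k \ge n$, since otherwise $[k]$ is itself an index and $\varphi_{[k]}$ is already the desired map. For the base case $m = 1$, note that $\ov{A}_{\{1\}}, \ldots, \ov{A}_{\{k\}}$ are independent over $\ov{A}_\emptyset = \acl(\emptyset)$, as are the $\ov{B}_{\{i\}}$, and that the maps $\varphi_{\{i\}}$ all extend $\varphi_\emptyset$ by coherence. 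Hence stationarity ($2$-uniqueness, available since $T$ is stable) lets me glue $\varphi_\emptyset \cup \bigcup_i \varphi_{\{i\}}$ into an elementary bijection $\varphi^{(1)} \colon \ov{A}_{[k]} \to \ov{B}_{[k]}$.

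For the inductive step, suppose $\varphi^{(m)}$ has been built, where $1 \le m \le n-2$. For each $s \subseteq [k]$ with $|s| = m+1$, both $\varphi_s$ and $\varphi^{(m)} \restriction \ov{A}_s$ are elementary bijections $\ov{A}_s \to \ov{B}_s$ (the latter carries $\ov{A}_s$ onto $\ov{B}_s$ because $\varphi^{(m)}$ extends the singleton maps), so I set $\mu_s := (\varphi^{(m)} \restriction \ov{A}_s)^{-1} \circ \varphi_s$, an elementary automorphism of $\ov{A}_s$. For any $v \subsetneq s$ we have $|v| \le m$, and there $\varphi_s \restriction \ov{A}_v = \varphi_v = \varphi^{(m)} \restriction \ov{A}_v$ — the first equality by coherence of the given system, the second by the inductive hypothesis — so $\mu_s$ fixes $\ov{A}_v$ pointwise. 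Thus the family $\{\mu_s : |s| = m+1\}$ satisfies exactly the hypotheses of relative $(m+2,k)$-uniqueness over $\emptyset$, which holds by Fact~\ref{skeletal_uniqueness} since $T$ is stable and has both $(m+1)$- and $(m+2)$-uniqueness (here using $m + 2 \le n$, and $m+2 \ge 3$). Hence $\mu := \bigcup_{|s| = m+1} \mu_s$ is elementary, and I extend it to some $\hat{\mu} \in \Aut(\C)$.

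Because each $\mu_s$ fixes every singleton set $\ov{A}_{\{i\}}$ pointwise, $\hat{\mu}$ fixes each $a_i$ and therefore restricts to an automorphism of $\ov{A}_{[k]}$, so $\varphi^{(m+1)} := \varphi^{(m)} \circ (\hat{\mu} \restriction \ov{A}_{[k]})$ is again an elementary bijection $\ov{A}_{[k]} \to \ov{B}_{[k]}$. On $\ov{A}_s$ with $|s| = m+1$ it equals $\varphi^{(m)} \circ \mu_s = \varphi_s$, and on $\ov{A}_s$ with $|s| \le m$ it equals $\varphi^{(m)}$ and hence $\varphi_s$ by the inductive hypothesis (for such $s$ one may choose $s' \supseteq s$ with $|s'| = m+1$, and $\mu_{s'}$, hence $\hat{\mu}$, fixes $\ov{A}_s$ pointwise, using $k \ge m+1$). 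Thus $\varphi^{(m+1)}$ extends all $\varphi_s$ with $|s| \le m+1$, completing the induction. The one point requiring genuine care — and the real content of the argument — is the construction of the correction maps $\mu_s$ together with the verification that they fix all lower sets pointwise, since this is precisely what converts the given data into the single-dimension form to which the gluing engine of Fact~\ref{skeletal_uniqueness} applies at each level; everything else is bookkeeping with the coherence relations and stationarity.
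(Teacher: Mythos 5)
Your proof is correct and takes essentially the same approach as the paper's: an induction on the size of the index sets in which, at each stage, a compatible family of correction automorphisms fixing all lower-dimensional algebraic closures pointwise is glued together by relative $(m+2,k)$-uniqueness via Fact~\ref{skeletal_uniqueness}. The only cosmetic difference is that you correct by precomposing with an automorphism of $\ov{A}_{[k]}$ (your explicit maps $\mu_s$), whereas the paper postcomposes with automorphisms of $\ov{B}_{[k]}$; your write-up is in fact more detailed than the paper's sketch.
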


\begin{proof}
Begin with any elementary map $\varphi_0 : \ov{A}_{[k]} \rightarrow \ov{B}_{[k]}$ such that $\varphi_0(a_i) = b_i$ for every $i \in [k]$. Next, we recursively build a series of elementary bijections $\varphi_1, \ldots, \varphi_{n-1}$ of $\ov{B}_{[k]}$ such that for each $j \leq n-1$, the map $$\varphi_j \circ \varphi_{j-1} \circ \ldots \circ \varphi_1 \circ \varphi_0$$ extends every $\varphi_s$ such that $|s| \leq j$: given a series of such maps $\varphi_1, \ldots, \varphi_{j-1}$ for some $j \leq n-1$, use the fact that $T$ has relative $(j + 1,k)$-uniqueness (by Fact~\ref{skeletal_uniqueness} above) to build the map $\varphi_j$. Finally, let $\varphi = \varphi_{n-1} \circ \varphi_{n-2} \circ \ldots \circ \varphi_0$.

\end{proof}

For the final lemma of this subsection, we note that with a suitably strong amalgamation assumption, we build ``symmetric'' systems of algebraically closed sets which generalize indiscernible sets.


\begin{definition}
\label{n_system}
Suppose $A \cup \{a_1, \ldots, a_k\} \subseteq \mathfrak{C}$ and $1 \leq n \leq k$. Then an \emph{$n$-symmetric system for $\{a_1, \ldots, a_k\}$ over $A$} is a collection of (generally infinite) tuples $\{\ov{a}_s : s \in [k]^{(\leq n)} \}$ such that:

\begin{enumerate}
\item $\ov{a}_s$ is a tuple enumerating $\acl_A(\{a_i : i \in s \})$;
\item For any permutation $\sigma$ of $[k]$, there is an elementary map $\varphi_\sigma \in \Aut(\mathfrak{C} / A)$ such that $$\varphi_\sigma(\ov{a}_s) = \ov{a}_{\sigma(s)}$$ for every $s \in [k]^{(\leq n)}$; and
\item The maps commute when applied to the tuples $\ov{a}_s$: for any $s \in [k]^{(\leq n)}$ and any two permutations $\sigma$ and $\tau$ of $[k]$, $$\varphi_\sigma (\varphi_\tau(\ov{a}_s)) = \varphi_{\sigma \circ \tau}(\ov{a}_s).$$

\end{enumerate}



\end{definition}

\begin{lemma}
\label{n_system_existence}
Suppose that $T$ has $\le n$-uniqueness and let $\{a_1,\dots,a_{k}\}$ be a Morley sequence over $A=\acl(A)$, where $k \geq n$. Then there is an $(n-1)$-symmetric system for $\{a_1, \ldots, a_k\}$ over $A$.

\end{lemma}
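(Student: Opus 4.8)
The plan is to construct the $(n-1)$-symmetric system by first building the algebraically closed sets $\ov{a}_s$ coherently, and then producing the permutation maps $\varphi_\sigma$ using the relative uniqueness machinery. First I would fix $\ov{a}_s$ for each $s \in [k]^{(\leq n-1)}$ to be some enumeration of $\acl_A(\{a_i : i \in s\})$; since $\{a_1,\dots,a_k\}$ is a Morley sequence over $A$, all tuples of the same size have the same type over $A$, so for each permutation $\sigma$ of $[k]$ there is (abstractly) some automorphism of $\C$ over $A$ taking $\{a_i : i\in s\}$ to $\{a_{\sigma(i)} : i \in s\}$. The content of the lemma is not the existence of such maps one $s$ at a time, but rather the existence of a single coherent family $\{\varphi_\sigma\}$ that simultaneously respects all the $\ov{a}_s$ for $|s| \leq n-1$ and that composes correctly (condition (3)).

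The key technical tool is Lemma~\ref{skeletal_maps} (``gluing of skeletal maps''), which uses $(\leq n)$-uniqueness. The idea is that to specify $\varphi_\sigma$ on $\ov{a}_{[k]} = \acl_A(\{a_1,\dots,a_k\})$, it suffices to specify its restrictions to the sets $\ov{A}_s$ for $|s| \leq n-1$ in a compatible way, and then Lemma~\ref{skeletal_maps} produces a genuine elementary map $\ov{A}_{[k]} \to \ov{A}_{[k]}$ extending all of them. So I would proceed as follows. First choose, \emph{once and for all}, for each pair $(s, \sigma(s))$ of equinumerous subsets of size $\leq n-1$, a specific elementary bijection $\psi_{s,\sigma(s)} : \ov{A}_s \to \ov{A}_{\sigma(s)}$; the natural choice is the one determined by the index map $a_i \mapsto a_{\sigma(i)}$, which makes sense because the $\ov{a}_s$ are \emph{indexed} enumerations. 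These are compatible under inclusion ($t \subseteq s$ gives a restriction), so Lemma~\ref{skeletal_maps} glues them into a single $\varphi_\sigma \in \Aut(\ov{A}_{[k]}/A)$, which extends to $\Aut(\C/A)$ by homogeneity of the monster. By construction $\varphi_\sigma(\ov{a}_s) = \ov{a}_{\sigma(s)}$, giving conditions (1) and (2).

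The main obstacle, and the heart of the proof, is the commutativity condition (3): we need $\varphi_\sigma \circ \varphi_\tau$ and $\varphi_{\sigma\tau}$ to \emph{agree on the tuples} $\ov{a}_s$ for all $|s| \leq n-1$ (not necessarily as maps of the whole monster). The subtlety is that Lemma~\ref{skeletal_maps} produces \emph{some} gluing but does not by itself guarantee that the gluings assemble into a group action. The way to force agreement is to engineer the restrictions so that $\varphi_\sigma$ restricted to $\ov{A}_s$ depends only on the \emph{target} pair $(s,\sigma(s))$ via the canonical index-induced bijection $\psi_{s,\sigma(s)}$. Then on each small set $\ov{A}_s$ one checks the cocycle identity $\psi_{\tau(s),\sigma\tau(s)} \circ \psi_{s,\tau(s)} = \psi_{s,\sigma\tau(s)}$ directly, since all three are induced by the respective index maps; hence $\varphi_\sigma \circ \varphi_\tau$ and $\varphi_{\sigma\tau}$ both extend the \emph{same} skeletal system $\{\psi_{s,\sigma\tau(s)} : |s| \leq n-1\}$. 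By the uniqueness part of the relative-uniqueness property (Fact~\ref{skeletal_uniqueness}), any two elementary maps agreeing on all the $\ov{A}_s$ of size $\leq n-1$ must agree on the tuples $\ov{a}_s$, which is exactly (3). I expect that verifying one can make the skeletal restrictions genuinely depend only on $(s,\sigma(s))$ — rather than on $\sigma$ globally — and that the resulting gluings then cohere, will require the most careful bookkeeping, and this is precisely where $(\leq n)$-uniqueness is used in full strength.
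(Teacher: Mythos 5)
Your overall strategy (glue skeletal maps via Lemma~\ref{skeletal_maps}, force each $\varphi_\sigma\restriction \ov{A}_s$ to depend only on the pair $(s,\sigma(s))$, and then get condition (3) from a cocycle identity among the pairwise maps) is exactly the paper's strategy, and your final gluing/commutativity step is correct given the pairwise maps. But there is a genuine gap at the step you pass over with ``the natural choice is the one determined by the index map $a_i \mapsto a_{\sigma(i)}$.'' No such map is determined: an elementary bijection $\ov{A}_s \to \ov{A}_{\sigma(s)}$ is very far from unique given its values on $\{a_i : i \in s\}$, since the algebraic closure contains many elements besides the $a_i$. If instead you mean the \emph{position-wise} map between the chosen enumerations $\ov{a}_s$ and $\ov{a}_{\sigma(s)}$, then for arbitrary (independently chosen) enumerations that map will in general fail to be elementary, fail to restrict correctly to subtuples, and fail the cocycle identity. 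Producing enumerations and elementary bijections $\psi_{s,t}$ that simultaneously satisfy elementarity, compatibility with subtuples, and $\psi_{t,u}\circ\psi_{s,t}=\psi_{s,u}$ is precisely the content of the lemma; you have deferred it (``will require the most careful bookkeeping'') rather than proved it.

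The paper closes this gap with a recursive ``hub'' construction (its Claim~\ref{commuting_sys}). By recursion on $m=|s|$: fix one reference enumeration $\ov{a}_{(1,\ldots,m)}$ of $\acl_A(a_1,\ldots,a_m)$; for each $s \in [k]^{(m)}$ use Lemma~\ref{skeletal_maps} (available by $(\leq n)$-uniqueness via Fact~\ref{skeletal_uniqueness}) to build an elementary map $\varphi_{[m],s}$ extending all the already-constructed level-$(m-1)$ maps $\varphi_{s',\sigma_{[m],s}(s')}$; \emph{define} $\ov{a}_s$ as the image of the hub tuple under $\varphi_{[m],s}$; and set $\varphi_{s,t} := \varphi_{[m],t}\circ(\varphi_{[m],s})^{-1}$. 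Because every $\varphi_{s,t}$ factors through the hub, the cocycle identity and $\varphi_{s,s}=\id$ are automatic, and compatibility with subtuples is then verified by a short computation; elementarity is built in since the enumerations are themselves images under elementary maps. In short, the enumerations cannot be fixed ``once and for all'' in advance as you propose: they must be manufactured together with the maps, level by level, and this recursion is the missing core of your argument.
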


\begin{proof}
First, we set some terminology for dealing with finite ordered tuples $s \in [k]^{(\leq n-1)}$. We abuse notation and write ``$i \in s$'' to mean that $i$ occurs in the tuple $s$. Given two tuples $s = (a_1, \ldots, a_m)$ and $t = (b_1, \ldots, b_m) $ of the same length $m$, let $\sigma_{s,t} : \{a_1, \ldots, a_m\} \rightarrow \{b_1, \ldots, b_m\}$ be the bijection defined by $\sigma_{s,t}(a_i) = b_i$. We say that $s'$ is a \emph{subtuple of $s$} (and write ``$s' \subseteq s$'') if every $i \in s'$ also occurs in $s$, and furthermore any pair of elements $i,j$ which occur in $s'$ also occur in $s$ in the same order (so, for example, $(1,3)$ is a subtuple of $(1,2,3)$ but $(3,1)$ is not).

Now we establish:

\begin{claim}
\label{commuting_sys}
There is a system of tuples $\{ \ov{a}_s : s \in [k]^{(\leq n-1)} \}$ and a system of elementary bijections $$\varphi_{s,t} : \ov{a}_s \rightarrow \ov{a}_t$$ for every pair $(s,t)$ of elements of $[k]^{(\leq n-1)}$ such that $|s| = |t|$, such that:

\begin{enumerate}
\setcounter{enumi}{5}
\item $\ov{a}_s$ enumerates all of the elements of $\acl_A(\{a_i : i \in s \})$;
\item $\varphi_{s,t}$ fixes the base $A$ pointwise;
\item if $|s| =|t|$, $s'$ is a subtuple of $s$, and $t' = \sigma_{s,t}(s')$, then the function $\varphi_{s', t'}$ is equal to the restriction of $\varphi_{s,t}$ to $\ov{a}_{s'}$;
\item $\varphi_{s,s}$ is the identity map on $\ov{a}_s$; and
\item if $|s| = |t| = |u|$, then $\varphi_{s,u} = \varphi_{t,u} \circ \varphi_{s,t}$.
\end{enumerate}

\end{claim}

\begin{proof}
We build the tuples $\ov{a}_s$ and the maps $\varphi_{s,t}$ by recursion on $|s|$. For $|s| = 1$, this is easy, using the fact that $\{a_1, \ldots, a_k\}$ is Morley over $A$. For the recursion step, suppose that we have defined both the tuples $\ov{a}_s$ and the maps $\varphi_{s,t}$ for every $s \in [k]^{(\leq m-1)}$, where $1 < m \leq n-1$. For the purposes of this proof only, we let $[m]$ denote not the set $\{1, \ldots, m\}$ but rather the ordered tuple $(1, \ldots, m)$. First let $\ov{a}_{[m]}$ be any enumeration of $\acl_A(\{a_i : 1 \leq i \leq m \})$. By Lemma~\ref{skeletal_maps}, for any $s \in [k]^{(m)}$, there is an elementary map $$\varphi_{[m], s} : \acl_A(\{a_i : 1 \leq i \leq m \}) \rightarrow \acl_A(\{a_i : i \in s \})$$ such that $\varphi_{[m], s}$ extends every map $\varphi_{s', \sigma_{[m], s}(s')}$ where $s' \subsetneq [m]$. In the special case where $s = [m]$ we can assume that $\varphi_{[m],[m]}$ is the identity map (since we are assuming inductively that condition~(9) holds for subtuples of $[k]$ of length less than $m$). For any $s \in [k]^{(m)}$, define $\ov{a}_s$ to be the image of $\ov{a}_{[m]}$ under the map $\varphi_{[m], s}$, and if $s, t$ are any two elements of $[k]^{(m)}$, let $\varphi_{s,t} := \varphi_{[m], t} \circ \left(\varphi_{[m], s}\right)^{-1}$. The fact that these maps $\varphi_{s,t}$ form a commuting system (conditions~(9) and (10)) is immediate. To check condition (8), suppose that $s' \subsetneq s \in [k]^{(m)}$, $t \in [k]^{(m)}$, and $t' := \sigma_{s,t}(s')$. Then by the definitions, $$\varphi_{s,t}(\ov{a}_{s'}) = \varphi_{[m], t} \left[ \left( \varphi_{s, [m]} \right) (\ov{a}_{s'}) \right] = \varphi_{[m], t} \left[ \left( \varphi_{s', \sigma_{s, [m]}(s')} \right) (\ov{a}_{s'}) \right] $$ $$ =  \varphi_{[m], t}\left[ \ov{a}_{\sigma_{s, [m]}(s')} \right]$$

$$ = \varphi_{\sigma_{s, [m]}(s'), t'} \left[ \ov{a}_{\sigma_{s, [m]}(s')} \right]$$ $$= \varphi_{s', t'}(\ov{a}_{s'}) = \ov{a}_{t'}.$$
\end{proof}

Claim~\ref{commuting_sys} gives the tuples $\ov{a}_s$ for $s \in [k]^{(\leq n-1)}$. For any permutation $\sigma$ is of $\{1, \ldots, k\}$, use Lemma~\ref{skeletal_maps} gives an elementary map $\varphi_\sigma$ extending every map $\varphi_{s,t}$ as in the Claim such that $s \in [k]^{(\leq n-1)}$ and $t = \sigma(s)$. At this point, it is clear that this forms an $(n-1)$-symmetric system.

\end{proof}

\section{$n$-ary polygroupoids}

This section is devoted to the study of \emph{$n$-ary quasigroupoids} (Definition~\ref{quasigroupoid}) and \emph{$n$-ary polygroupoids} (Definition~\ref{associativity}). We note that these structures are a simultaneous generalization of groupoids (which corresponds to the case $n=2$) and $n$-ary groups (as studied by Emil Post in \cite{post}). Groupoids generalize groups in the sense that the composition operation is only partially defined, since we need a compatibility condition $s(g) = t(f)$ to hold in order to define $g \circ f$. On the other hand, $n$-ary groups (or polyadic groups) have an operation which is totally defined, but $n$-ary instead of binary. Our $n$-ary groupoids have an operation (given by the $Q$ relation below) which is $n$-ary and partially defined.

There may be connections with Jacob Lurie's higher topos theory which it would be good to clarify further in future research. In particular, \emph{connected} $n$-ary quasigroupoids (see Definition~\ref{connected}) look quite similar to $(\infty, 1)$-groupoids in the sense of Definition~1.1.2.4 of \cite{lurie}, or more precielly, Kan complexes in which $n$-dimensional horns have unique fillers.

There is no model theory in this section; the connection with failures of amalgamation properties will be established in Section~3.

Let us start with a preliminary definition.

\begin{definition}
\label{comp_tuples}
Let $M=(P_1,\dots,P_n,\pi^2,\dots,\pi^n)$ be a structure with sorts $P_i$, $i=1,\dots,n$ and
functions $\pi^k:P_k\to (P_{k-1})^k$, $k=2,\dots,n$. We use the symbol $\pi^k_i(f)$ to refer to the $i$th element of the tuple $\pi^k(f)$.

We say that a tuple $(f_1,\dots,f_{k+1})\in (P_k)^{k+1}$ is \emph{compatible} if
\begin{enumerate}
\item
$k=1$ and $f_1\ne f_2$; or
\item
$k\ge 2$ and $\pi^k_i(f_j)=\pi^k_{j-1}(f_i)$ for all $1\le i<j\le k+1$.
\end{enumerate}
\end{definition}

If the images of the projection maps $\pi^i$ are always compatible tuples, then we can iteratively apply the projections to define a concept of an element $f \in P_i$ being ``over'' an $i$-tuple from $P_1$:

\begin{definition}
\label{over_P1}
Suppose that $M = (P_1, \ldots, P_n, ,\pi^2,\dots,\pi^n)$ is a structure with sorts $\{P_1, \ldots, P_n\}$ and
functions $\pi^k:P_k\to (P_{k-1})^k$ for $k=2,\dots,n$, and suppose that for every $f \in P_k$, the image $\pi^k(f)$ forms a compatible tuple in the sense of the previous definition. Then for any $i \in \{2, \ldots, n\}$ and any $f \in P_i$, we say that \emph{$f$ is over $(a_1, \ldots, a_i) \in (P_1)^{(i)}$} if:

\begin{enumerate}
\item $i = 2$ and $(a_1, a_2) = \pi^2(f)$; or
\item $i > 2$ and for every $j \in \{1, \ldots, i\}$, $\pi^i_j(f)$ is over $(a_1, \ldots, \widehat{a}_j, \ldots, a_i)$.
\end{enumerate}

For any $(a_1, \ldots, a_i) \in I^{(i)}$, we denote by $P_i(a_1, \ldots, a_i)$ the set of all $f \in P_i$ which are over $(a_1, \ldots, a_i)$. If $f \in P_i(a_1, \ldots, a_i)$, we also write ``$\pi(f) = (a_1, \ldots, a_i)$.''

\end{definition}

The point of assuming that $\pi^i(f)$ is always a compatible tuple in the definition above is that this easily implies that for every $i \in \{2, \ldots, n\}$, the sort $P_i$ is the disjoint union of all the ``fibers'' $P_i(a_1, \ldots, a_i)$ where $(a_1, \ldots, a_i) \in (P_1)^{(i)}$.

Below, we will be dealing with $n$-sorted structures, with sorts $P_1,\dots,P_n$. We found it convenient to use separate symbols $I$ and $P$ for the most frequently used sorts $P_1$ and $P_n$, respectively.

\begin{definition}
\label{quasigroupoid}
If $n \geq 2$, an \emph{$n$-ary quasigroupoid}\footnote{The ``quasi-'' prefix accords with the terminology from abstract algebra that a ``quasigroup'' is a set with a binary operation which is divisible but not necessarily associative.} is a structure $\mathcal{H} = (I, P_2, \ldots, P_{n-1}, P, Q)$ with $n$ disjoint sorts $I = P_1, P_2, \ldots, P_n = P$ equipped with an $(n+1)$-ary relation $Q \subseteq P^{n+1}$ and a system of maps $\langle \pi^k : 2 \leq k \leq n \rangle$ satisfying the following axioms:

\begin{enumerate}
\item
For each $k \in \{2, \ldots, n\}$, the function $\pi^k$ maps an element $f\in P^k$ to a $k$-tuple
$(\pi^k_1(f),\dots, \pi^k_k(f))$.

\item (Coherence) For any $k \in \{2, \ldots, n\}$, for each $f\in P_k$, the tuple $\pi^k(f)$ is compatible.

\item
(Compatibility and $Q$)
If $Q(p_1, \ldots, p_{n+1})$ holds, then $(p_1, \ldots, p_{n+1})$ is a compatible $(n+1)$-tuple
of elements of $P$.

\item (Uniqueness of horn-filling) Whenever $Q(p_1, \ldots, p_{n+1})$ holds, then for any $i \in \{1, \ldots, {n+1}\}$, $p_i$ is the unique element $x \in P$ which satisfies $$Q(p_1, \ldots, p_{i-1}, x, p_{i+1}, \ldots, p_{n+1}).$$
\end{enumerate}

\end{definition}

For any set $I$ and any integer $n \geq 1$, we denote by $I^{(n)}$ the set of all ordered $n$-tuples of pairwise distinct elements from $I$. The coherence axiom guarantees, in
particular, that every element in $P_j$ is supported, in the sense made precise 
below, by $j$ distinct elements from $I$. 




\begin{definition}
\label{support}
Let $\mathcal{H} = (I, \ldots, P, Q)$ be an $n$-ary quasigroupoid and suppose that $X \subseteq (I \cup P_2 \cup \ldots \cup P)$. 

\begin{enumerate}


\item The \emph{support of $X$}, denoted $\supp(X)$, is the subset of $I$ generated from $X$ by applying the projection maps $\langle \pi^i_j : 2 \leq i \leq n, 1 \leq j \leq i \rangle$.

\item 
The \emph{closure} $\cl(X)$ is $$\cl(X) := \supp(X) \cup \bigcup_{2 \leq i \leq n} \bigcup_{w \in \supp(X)^{(i)}} P_i(w) $$

\end{enumerate}
\end{definition}

\begin{definition}
\label{local_finiteness}
The $n$-ary quasigroupoid $\mathcal{H} = (I, \ldots, P, Q)$ is \emph{locally finite} if for every $w \in I^{(n)}$, the set $P(w)$ is finite.
\end{definition}

Note that for any $n$-ary quasigroupoid $\mathcal{H} = (I, \ldots, P, Q)$ and any $X \subseteq (I \cup P_2 \cup \ldots \cup P)$, the set $\cl(X)$ naturally carries the structure of a ``sub-quasigroupoid'' of $\mathcal{H}$, where the $I$-sort is $\supp(X)$, the remaining sorts are the fibers in $\mathcal{H}$ above tuples from $\supp(X)$, and the $Q$-relation is the same as in $\mathcal{H}$ only restricted to tuples from $P \cap \cl(X)$.

\begin{definition}
\label{connected}
Let $\mathcal{H} = (I, \ldots, P, Q)$ be an $n$-ary quasigroupoid. We call $\mathcal{H}$ \emph{connected} if it satisfies both of the following conditions:
\begin{enumerate}
\item If $i \in \{1, \ldots, n-1\}$, then any compatible tuple $(p_1, \ldots, p_{i+1})$ from $P_i$ is in the image of $\pi^{i+1}$.\footnote{Note that when $i = 1$, then this simply says that every ordered pair $(a_1, a_2)$ of distinct elements from $I$ is in the image of $\pi^2$.}
\item (Existence of horn fillers) Suppose that $i \in \{1, \ldots, n+1\}$ and $\langle p_j : 1 \leq j \leq n+1 \rangle$ is a tuple of elements from $P$ which is compatible with respect to $\pi^n$. Then there is an element $p'_i \in P$ such that $Q$ holds of $(p_1, \ldots, p'_i, \ldots, p_{n+1})$.
\end{enumerate}

\end{definition}

The following two claims show that every ``partially compatible'' sequence of elements of $P$ can be completed to a compatible sequence of length $n+1$ that satisfies $Q$ and that we can select the compatible elements satisfying the relation $Q$ on all but possibly one $n$-dimensional face of an $(n+1)$-dimensional tetrahedron. 

\begin{definition}
\label{compatible}
Let $\mathcal{H} = (P_1, \ldots, P_n, Q)$ be an $n$-ary quasigroupoid. Let $A_i\subset P_i$, $i=1,\dots,n$ be given sets. We say that the collection $\{A_i\mid i=1,\dots,n\}$ is \emph{closed under projections} if for all $k=1,\dots,n-1$, for all $a\in A_{k+1}$ and all $1\le j\le k$ we have $\pi^{k+1}_j(a)\in A_k$.

We say that a collection $\{A_i\subset P_i \mid i=1,\dots,n\}$ is a \emph{compatible system} if it is closed under projections and each $i=2,\dots, n$ and every tuple $w\in A_1^{(i)}$, the set $P_i(w)\cap A_i$ has at most one element.
\end{definition}

\begin{claim}\label{face_fill}
Let $\mathcal{H} = (P_1, \ldots, P_n, Q)$ be a connected $n$-ary quasigroupoid. Let 
$\{A_i\mid i=1,\dots,n\}$ be a compatible system such that $|A_1|\ge n+1$. 

(1) Let $2\le i\le n$ and let $f,g\in A_i$ be elements that project onto tuples 
$u,v\in A_1^{(i)}$ such that $\bd_ju=\bd_kv$ for some $1\le j,k\le i$, where ``$\bd_j$'' is the operation which deletes the $j$th entry of an ordered tuple. Then $\pi^i_j(f)=\pi^i_k(g)$.

(2) There is a maximal compatible system $\{B_i\mid i=1,\dots,n\}$ extending 
$\{A_i\mid i=1,\dots,n\}$ that projects onto $A_1$. That is, there are sets $B_i$ satisfying 
\begin{itemize}
\item[(a)]
$B_1=A_1$ and $B_i\supset A_i$ for $2\le i\le n$;
\item[(b)] 
for each $i=2,\dots, n$ and for every tuple $w\in A_1^{(i)}$, the set 
$P_i(w)\cap A_i$ has exactly one element.
\end{itemize}
\end{claim}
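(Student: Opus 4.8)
The plan is to establish the two parts in order, using part~(1) as the essential input for the recursion in part~(2).

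For part~(1), both conclusions fall out directly from Definition~\ref{over_P1} together with the two defining properties of a compatible system. By the meaning of ``over,'' the projection $\pi^i_j(f)$ lies over $\bd_j u$ and $\pi^i_k(g)$ lies over $\bd_k v$; since $\bd_j u = \bd_k v =: w$, both elements lie in the fiber $P_{i-1}(w)$. As $\{A_i\}$ is closed under projections and $f,g \in A_i$, we also have $\pi^i_j(f), \pi^i_k(g) \in A_{i-1}$, so both belong to $P_{i-1}(w) \cap A_{i-1}$. For $i \geq 3$ the compatible-system hypothesis says this set has at most one element, forcing $\pi^i_j(f) = \pi^i_k(g)$; for $i = 2$ the tuple $w$ is a single vertex and being ``over'' $w$ simply means equalling that vertex, so the two projections again coincide.

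For part~(2), I would build the $B_i$ by recursion on $i$, maintaining the invariant that the collection $\mathcal{S}_i := \{B_1, \ldots, B_i, A_{i+1}, \ldots, A_n\}$ is a compatible system in which each $B_j$ (for $j \leq i$) contains $A_j$ and has \emph{exactly} one element over every tuple in $A_1^{(j)}$ (so that condition~(b), which should be read with $B_i$ in place of $A_i$, holds). Start with $B_1 := A_1$, where the invariant is vacuous. At the recursion step, given $\mathcal{S}_{i-1}$, for each $w \in A_1^{(i)}$ with $P_i(w) \cap A_i$ already nonempty the compatible-system property of $A$ makes it a singleton, which we keep; for each $w$ with $P_i(w) \cap A_i = \emptyset$, let $g_j$ be the unique element of $B_{i-1}$ over $\bd_j w$ (which exists and is unique by the inductive invariant, as $\bd_j w \in A_1^{(i-1)}$) and form the candidate face tuple $(g_1, \ldots, g_i)$.

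The crucial step is to verify that $(g_1, \ldots, g_i)$ is a \emph{compatible} tuple of elements of $P_{i-1}$, for then connectedness condition~(1) of Definition~\ref{connected} supplies an element $f_w \in P_i$ with $\pi^i(f_w) = (g_1, \ldots, g_i)$, which is then over $w$ by construction. Compatibility amounts to the identities $\pi^{i-1}_a(g_b) = \pi^{i-1}_{b-1}(g_a)$ for $1 \leq a < b \leq i$: here $\pi^{i-1}_a(g_b)$ lies over $\bd_a \bd_b w$ and $\pi^{i-1}_{b-1}(g_a)$ lies over $\bd_{b-1}\bd_a w$, and the simplicial identity $\bd_a \bd_b = \bd_{b-1} \bd_a$ (valid for $a<b$) shows these lie over the same tuple, so part~(1) applied to $g_a, g_b \in B_{i-1}$ inside the compatible system $\mathcal{S}_{i-1}$ forces them equal (in the lowest case $i=2$, compatibility just says $g_1 \neq g_2$, which holds since $\bd_1 w \neq \bd_2 w$). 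Setting $B_i := A_i \cup \{f_w : P_i(w)\cap A_i = \emptyset\}$, one checks directly that $\mathcal{S}_i$ stays closed under projections (the new faces $\pi^i_j(f_w) = g_j$ lie in $B_{i-1}$, and projections of $A_{i+1}$ still land in $A_i \subseteq B_i$) and has exactly one element per fiber; the resulting $\{B_1, \ldots, B_n\}$ is the desired maximal system, and the hypothesis $|A_1| \geq n+1$ guarantees all the tuple-sets $A_1^{(i)}$ referenced above are nonempty. The main obstacle is exactly this compatibility check: one must track the face identities carefully and invoke part~(1) for the \emph{updated} system $\mathcal{S}_{i-1}$ rather than the original $\{A_i\}$, after which preservation of the invariant is routine bookkeeping.
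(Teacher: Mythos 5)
Your proof is correct and takes essentially the same route as the paper's: part (1) by combining closure under projections with the at-most-one-element-per-fiber condition, and part (2) by a level-by-level recursion that fills each empty fiber via connectedness, using part (1) together with the simplicial identity $\bd_a\bd_b=\bd_{b-1}\bd_a$ to check that the candidate face tuples are compatible. The only difference is bookkeeping: the paper carries the existence of compatible face tuples one level up as part of its inductive hypothesis, whereas you derive compatibility at the point of use from your exactly-one-element-per-fiber invariant.
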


\begin{proof}
(1) Let $w=\bd_j u = \bd _k v$ be the common subtuple. Since $\{A_i\mid i=1,\dots,n\}$ is closed under projections, we have $\pi^i_j(f),\pi^i_k(g)\in A_{i-1}$ are the elements that project onto the same tuple $w$. Therefore, $\pi^i_j(f)=\pi^i_k(g)$.

(2) By induction on $j\ge 1$, we construct the sets $B_j\supset A_j$ such that 
$\{B_1,\dots,B_j,A_{j+1},\dots,A_n\}$ form a compatible system and for all 
$2\le j \le n$ and all $w \in A_1^{(j+1)}$, there is a compatible 
tuple $(f_1,\dots,f_{j+1})$ of elements of $B_j$ such that $f_k$ projects onto 
$\bd _k(w)$ for all $k=1,\dots,j+1$.

Let $B_1:=A_1$. Suppose the sets $B_1,\dots,B_j$ have been constructed.
To construct $B_{j+1}$, enumerate all the tuples in the set $A_1^{(j+1)}=\{w_i\mid i<\alpha\}$. 
For each tuple $w_i$, if there is an element $f\in A_{j+1}$ that projects onto $w_i$, let $g_i:=f$; otherwise, take the compatible sequence $(f_1,\dots,f_{j+1})$ in $B_j$ such that $f_k$ projects onto 
$\bd _k(w_i)$. By connectedness of $\mathcal{H}$, there is an element $g_i\in P_{j+1}$ such that $\pi^{j+1}(g_i)=(f_1,\dots,f_{j+1})$. Let $B_{j+1}:=\{g_i\mid i<\alpha\}$. By construction, $B_{j+1}\supseteq A_{j+1}$; for every $w\in A_1^{(j+1)}$, there is exactly one element in the set $B_{j+1}\cap P(w)$; and for any sequence $(a_1,\dots,a_{j+2})$ of elements of $A_1$, the elements of $B_{j+1}$ that project onto subsequences of length $j+1$ are automatically compatible by (1).
\end{proof}

\begin{claim}\label{horn_fill}
Let $\mathcal{H} = (I, \ldots, P, Q)$ be a connected $n$-ary quasigroupoid. Given distinct $a_1,\dots,a_{n+2}$ in $I$, there exist elements $\{p^i_j\mid 1\le i\le n+2,\ 1\le j\le n+1\}\subset P$ such that
\begin{enumerate}
\item
$Q(p^i_1,\dots,p^i_{n+1})$ for all $i=2,\dots, n+2$;
\item
$\supp(\{p^i_j\mid 1\le j\le n+1\})=\{a_1,\dots,\widehat{a_i},\dots,a_{n+2}\}$
 for all $i=1,\dots,n+2$;
\item
$p^i_j=p^{j+1}_i$ for all $1\le i\le j\le n+1$.
\end{enumerate}
Moreover, if $\{p_1,\dots,p_{n+1}\}\subset P$ have the support $\{a_1,\dots,a_{n+1}\}$,
$p_j$ projects onto $\bd_j(a_1,\dots,a_{n+1})$, and $Q(p_1,\dots,p_{n+1})$ holds,
then we may choose $p^{n+2}_j=p_j$ for $j=1,\dots,n+1$.
\end{claim}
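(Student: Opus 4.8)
The plan is to regard the elements $p^i_j$ as fillers placed on the codimension-$2$ faces of the combinatorial $(n+1)$-simplex on the vertex set $\{a_1,\dots,a_{n+2}\}$, and to fill those faces one facet at a time, using horn-filling (Definition~\ref{connected}(2) and axiom~(4) of Definition~\ref{quasigroupoid}) to force the relation $Q$ on every facet except the one omitting $a_1$. First I would fix the bookkeeping. For $1\le i\le n+2$ and $1\le j\le n+1$, let $\sigma(i,j)$ be the $n$-element subset of $\{a_k:k\ne i\}$ obtained by deleting its $j$-th vertex; concretely this deletes $a_i$ together with $a_j$ when $j<i$ and with $a_{j+1}$ when $j\ge i$. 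A direct check gives $\sigma(i,j)=\sigma(j+1,i)$ whenever $i\le j$, which is precisely the simplicial identity demanded by condition~(3). Hence it suffices to choose, for each $n$-subset $\sigma$ of $\{a_1,\dots,a_{n+2}\}$, a single filler $q_\sigma\in P(\sigma)$ and to set $p^i_j:=q_{\sigma(i,j)}$: condition~(2) is then immediate since $\bigcup_{j}\sigma(i,j)=\{a_k:k\ne i\}$, and condition~(3) reduces to the well-definedness of $q_\sigma$.

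To make the compatibility bookkeeping trivial, I would next apply Claim~\ref{face_fill}(2) to $A_1=\{a_1,\dots,a_{n+2}\}$ (legitimate since $|A_1|=n+2\ge n+1$) to fix a compatible system of the lower sorts, which in particular designates a unique element of each $P_k$, $k<n$, over every relevant subtuple. Restricting attention to those fillers $f\in P(\sigma)$ whose projection $\pi^n(f)$ consists of these designated lower-sort elements---a nonempty family by condition~(1) of Definition~\ref{connected}---any two such fillers have identical $\pi^n$-projection. Consequently, for each facet the tuple of its chosen fillers is automatically compatible, and any element produced by horn-filling lands back in this restricted family, so compatibility is preserved for free throughout.

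The core is then a sequential construction processing the facets $F_{n+2},F_{n+1},\dots,F_2$ in decreasing order of the omitted vertex. When I reach $F_j$, the $n+2-j$ faces shared with an already-processed facet $F_i$ ($i>j$) are already determined; I fill freely (using existence of fillers) all remaining free faces of $F_j$ \emph{except} the one omitting $a_1$, so that exactly $n$ of its $n+1$ faces are determined; finally I fill the $a_1$-omitting face $\{a_k:k\ne 1,j\}$ with the unique horn-filler that makes $Q(p^j_1,\dots,p^j_{n+1})$ hold, which exists and is unique by Definition~\ref{connected}(2) and the uniqueness-of-horn-filling axiom. The key structural point is that the horn-filled face always omits $a_1$, hence (besides $F_j$) lies only in $F_1$, which carries no $Q$-requirement; so no later facet is over-constrained, and each $F_i$ with $i\ge 2$ retains its own $a_1$-omitting face free for its horn-fill. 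This produces condition~(1) for $i=2,\dots,n+2$. For the ``moreover'' clause I would initialize by seeding the compatible system with the given data and setting the fillers of $F_{n+2}$ equal to $p_1,\dots,p_{n+1}$ (which already satisfy $Q$, so are compatible and have the designated projections, and which sit over $\bd_j(a_1,\dots,a_{n+1})=\sigma(n+2,j)$) before processing $F_{n+1},\dots,F_2$ as above.

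The main obstacle is organizing the filling order so that at each facet exactly one free face survives for horn-filling while no previously forced instance of $Q$ is disturbed; the decreasing-order scheme together with always reserving the $a_1$-omitting face for the horn-fill is exactly what resolves this. The secondary technical point---guaranteeing that all fillers over a common face share the same $\pi^n$-projection, so that the facet tuples are compatible and remain so under horn-filling---is handled by first passing to the compatible lower system supplied by Claim~\ref{face_fill}.
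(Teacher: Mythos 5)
Your proposal is correct and is essentially the paper's own argument: both use Claim~\ref{face_fill} to fix a maximal compatible system so that compatibility of all tuples is automatic, both reserve the face omitting $\{a_1,a_i\}$ of each facet $F_i$ ($i\ge 2$) to be horn-filled so that $Q$ holds there, and both leave the facet omitting $a_1$ unconstrained, its faces being given by the matching $p^1_j:=p^{j+1}_1$. The remaining differences are purely organizational --- the paper reduces at once to the ``moreover'' clause and draws every face containing $a_1$ from the single maximal system $B_n$, whereas you process facets in decreasing order and allow free choices at level $n$ subject to the designated lower-sort projections --- so the mathematical content coincides.
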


\begin{proof}
It suffices to prove the ``moreover'' clause of the above lemma. Let $\{A_1,\dots,A_n\}$ be the 
compatible system generated by the elements $\{p_1,\dots,p_{n+1}\}$ (that is a system closed under projections such that $A_n=\{p_1,\dots,p_{n+1}\}$. Apply Claim~\ref{face_fill} to the compatible system $\{A_1\cup\{a_{n+2}\},A_2,\dots,A_n\}$; let $\{B_1,\dots,B_n\}$ be the maximal compatible extension. For
$2\le i\le n+2$ and $2\le j\le n+1$, let $p^i_j$ be the unique element of $B_n$ that projects onto
$\bd_j(\bd_i(a_1,\dots,a_{n+2}))$. For $i\ge 2$, by Definition~\ref{connected}, we may choose $p^i_1\in P_n(\bd_i(a_2,\dots,a_{n+2}))$ such that $Q(p^i_1,\dots,p^i_{n+1})$ holds. Finally, let
$p^1_j:=p^{j+1}_1$ for $j=1,\dots,n+1$. This gives the required collection of elements.
\end{proof}

\begin{definition}
\label{associativity}
If $\mathcal{H} = (I, \ldots, P, Q)$ is an $n$-ary quasigroupoid, we say that $\mathcal{H}$ is an \emph{$n$-ary polygroupoid} if it satisfies the following condition:

(Associativity) Suppose that 
$\{p^i_j \mid 1 \leq i \leq n+2,\ 1\le j\le n+1\}$ is a 
collection of elements in $P$ such that for each $i=1,\dots,n+2$
the elements $\{p^i_j\mid 1\le j\le n+1\}$ are compatible
and such that $p^i_j=p^{j+1}_i$ for all $1\le i\le j\le n+1$.

For each $\ell=1,\dots,n+2$, if $Q(p^i_1,\dots,p^i_{n+1})$ 
holds for all $i\in \{1,\dots,n+2\}\setminus \{\ell\}$, then 
$Q(p^\ell_1,\dots,p^\ell_{n+1})$ holds.

\end{definition}

It will be useful for later to note that the conclusion of Claim~\ref{horn_fill} can be strengthened, if we have associativity:

\begin{remark}
\label{Q_solutions}
If $\mathcal{H}$ is an $n$-ary polygroupoid, then for the set $\{p_j^i\mid 1\le i\le n+2,\ 1\le j\le n+1\}$ constructed in Claim~\ref{horn_fill}, we also have $Q(p^1_1,\dots,p^1_{n+1})$.
\end{remark}

\begin{definition}
\label{group_action}
Let $G$ be a group and $\mathcal{H} = (I, \ldots, P, Q)$ an $n$-ary quasigroupoid. A \emph{regular action of $G$ on $\mathcal{H}$} is a group action on the set $P$ such that:

\begin{enumerate}
\item For any $p \in P$ and any $g \in G$, $\pi^n(g.p) = \pi^n(p)$;
\item $G$ acts regularly (transitively and faithfully) on each fiber $P_n(w)$, where $w$ is a compatible $(n+1)$-tuple from $P$; and
\item If $Q(p_1, \ldots, p_{n+1})$ holds, then for any $g \in G$ and any $i \in \{1, \ldots, n\}$, we also have $$Q(p_1, \ldots, g.p_i, g.p_{i+1}, \ldots, p_{n+1})$$ (where only the two elements $p_i$ and $p_{i+1}$ have been acted upon by $g$).
\end{enumerate}

\end{definition}

It turns out that the group $G$ in the definition above is always abelian.

\begin{proposition}
\label{alt_2}
Let $\mathcal{H}$ be an $n$-ary quasigroupoid (for $n \geq 2$) equipped with a regular action of the group $G$, and assume that $Q$ holds on at least one $(n+1)$-tuple.  Then:
\begin{enumerate}
\item
For any $p_1,\dots, p_{n+1}\in P$ and any $i < j$, 
$$Q(p_1, \ldots, p_{n+1}) \Leftrightarrow Q(p_1, \ldots,  g. p_i,  \ldots,  g^{(-1)^{j-i+1}}.f_j,  \ldots, p_{n+1}).$$
\item
The group $G$ is abelian (so we write the group additively below).
\item
If $Q(p_1, \ldots, p_{n+1})$ holds, then for any $(g_1, \ldots, g_{n+1}) \in G^{n+1}$, $$Q(g_1.p_1, g_2.p_2, \ldots, g_{n+1} .p_{n+1}) \Leftrightarrow \sum_{i=1}^{n+1} (-1)^i g_i = 0.$$
\end{enumerate}
\end{proposition}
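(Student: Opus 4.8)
The plan is to derive all three parts from one basic move: by clause~(3) of Definition~\ref{group_action}, acting by a single $g\in G$ simultaneously on two \emph{adjacent} coordinates $i,i+1$ of an $(n+1)$-tuple sends $Q$-tuples to $Q$-tuples, and since the same move with $g^{-1}$ undoes it, we obtain the equivalence
\[
Q(p_1,\dots,p_{n+1})\Leftrightarrow Q(p_1,\dots,g.p_i,g.p_{i+1},\dots,p_{n+1})
\]
for each $i\in\{1,\dots,n\}$. For part~(1), fixing $i<j$ I would compose these elementary moves along the consecutive pairs $(i,i+1),(i+1,i+2),\dots,(j-1,j)$, using on the pair $(i+m,i+m+1)$ the element $g^{(-1)^m}$. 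Reading off the total exponent accumulated at each coordinate, the intermediate coordinate $i+m$ (for $1\le m\le j-i-1$) receives $g^{(-1)^m}g^{(-1)^{m-1}}=e$ and is fixed, coordinate $i$ receives $g$, and coordinate $j$ receives $g^{(-1)^{j-i-1}}=g^{(-1)^{j-i+1}}$; as each factor preserves $Q$, so does the composite. Crucially, only powers of a single $g$ are multiplied here, which commute whether or not $G$ is abelian, so there is no circularity with part~(2).

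For part~(2), fix $g,h\in G$ and a tuple $\bar p=(p_1,\dots,p_{n+1})$ with $Q(\bar p)$, which exists by hypothesis. From $\bar p$ I would run two sequences of elementary moves: first $g$ on the pair $(1,2)$ then $h$ on $(2,3)$, versus first $h$ on $(2,3)$ then $g$ on $(1,2)$. Both outputs satisfy $Q$ and coincide in every coordinate --- namely $g.p_1$, $h.p_3$, and $p_4,\dots,p_{n+1}$ --- except the second, which reads $hg.p_2$ in the first sequence and $gh.p_2$ in the second. By uniqueness of horn-filling (clause~(4) of Definition~\ref{quasigroupoid}) the second coordinate is determined by the others, so $hg.p_2=gh.p_2$, and since the action is free on the fiber of $p_2$ this gives $hg=gh$. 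The assumption $n\ge 2$ is exactly what provides the third coordinate needed to carry $h$.

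For part~(3), write $G$ additively and let $\phi(\mathbf g)=\sum_{i=1}^{n+1}(-1)^ig_i$ for $\mathbf g=(g_1,\dots,g_{n+1})$. Let $T\subseteq G^{n+1}$ be the set of tuples $\mathbf g$ with $Q(\mathbf q)\Leftrightarrow Q(\mathbf g.\mathbf q)$ for all $\mathbf q$; this is readily a subgroup and, by the equivalence above, contains each ``elementary'' tuple $e_{i,i+1}(g)$ having $g$ in positions $i,i+1$ and $0$ elsewhere. The key point is that $\ker\phi$ is generated by these elementary tuples: given $\mathbf g\in\ker\phi$, solve the triangular system $a_{k-1}+a_k=g_k$ with $a_0=a_{n+1}=0$, where the closing equation $a_n=g_{n+1}$ holds precisely because $\phi(\mathbf g)=0$, yielding $\mathbf g=\sum_{i=1}^n e_{i,i+1}(a_i)$. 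Hence $\ker\phi\subseteq T$, so whether $Q(g_1.p_1,\dots,g_{n+1}.p_{n+1})$ holds depends only on the coset $\mathbf g+\ker\phi$, i.e.\ only on $c:=\phi(\mathbf g)$. Taking the representative $(-c,0,\dots,0)$ reduces the question to whether $Q((-c).p_1,p_2,\dots,p_{n+1})$ holds, which by uniqueness of horn-filling and freeness of the action occurs iff $(-c).p_1=p_1$ iff $c=0$; this is exactly the asserted equivalence.

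I expect the generation claim in part~(3) to be the main obstacle: it is where the alternating sign in $\phi$ genuinely enters and where one must check that the triangular recursion closes consistently, with solvability matching $\phi(\mathbf g)=0$. The only other delicate points are bookkeeping --- pinning down the exponent signs in the telescoping of part~(1) and ensuring that part~(1) never invokes commutativity, so that it is legitimately available for the abelianness argument in part~(2).
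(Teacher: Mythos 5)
Your proposal is correct and takes essentially the same route as the paper: part (1) by iterating the adjacent-pair moves from clause~(3) of the definition of a regular action (the paper's ``induction on $j-i$''), part (2) by performing two overlapping moves in both orders and invoking uniqueness of horn-filling together with regularity of the action, and part (3) by reducing an arbitrary tuple to a normal form via the elementary moves. The only differences are matters of detail: the paper compares the commuting moves at coordinate $1$ using pairs $(1,2)$ and $(1,3)$ while you use $(1,2)$ and $(2,3)$, and the paper dismisses (3) as ``immediate from (1)'' whereas you supply the kernel-generation bookkeeping explicitly --- both are sound.
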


\begin{proof}
The first statement is straightforward, by property~(3) of Definition~\ref{group_action} and induction on $j-i$.

To establish claim (2), note that $Q(p_1,\dots,p_{n+1})$ implies, for all $g,h\in G$
$$
Q(g.p_1,g.p_2,p_3,\dots,p_{n+1}),\quad Q(h.p_1,p_2,h^{-1}.p_3,\dots,p_{n+1})
$$ 
and, acting by $h$ and $g$ respectively on the sets of elements above we get 
$$
Q((hg).p_1,g.p_2,h^{-1}.p_3,\dots,p_{n+1}),\quad Q((gh).p_1,g.p_2,h^{-1}.p_3,\dots,p_{n+1}).
$$
It follows from Axiom~(2) in the definition of a polygroupoid that $(hg).p_1 = (gh).p_1$ and therefore $hg=gh$ by regularity of the action.

The third statement is immediate from (1).
\end{proof}

From now on, whenever $G$ is a group acting regularly on a quasigroupoid $\mathcal{H}$, then we will write both the operation of $G$ and its action on $\mathcal{H}$ additively.

\subsection{Actions and associativity}

The next goal (Theorem~\ref{associativity_action} below) is to show that any $n$-ary quasigroupoid $\mathcal{H}$ with a regular action by a group $G$ which satisfies a certain homogeneity assumption (which will always hold in our model-theoretic application) is necessarily almost associative: the associativity law for $Q$ holds, possibly modulo ``twisting'' one coordinate by a fixed element of $G$. Since the twisted version $Q_g$ of $Q$ is interdefinable with $Q$ over $G$, this can be used later to obtain definable polygroupoids from definable $n$-ary quasigroupoids.



\begin{definition}
\label{homog_action}
Let $\mathcal{H} = (I, \ldots, P, Q)$ be a connected $n$-ary quasigroupoid. Suppose that $G$ acts regularly on $\mathcal{H}$. We say that \emph{the action is $(n+2)$-homogeneous} if whenever $(a_1, \ldots, a_{n+2})$ and $(b_1, \ldots, b_{n+2})$ are tuples from $I^{(n+2)}$ and $(A_1, \ldots, A_{n-1})$, $(B_1, \ldots, B_{n-1})$ are maximal compatible systems going up to $P_{n-1}$ such that $A_1 = \{a_1, \ldots, a_{n+2}\}$ (respectively, $B_1 = \{b_1, \ldots, b_{n+2} \}$), then there is an isomorphism $f: \cl(\overline{a}) \rightarrow \cl(\overline{b})$ such that

\begin{enumerate}
\item $f(a_i) = b_i$ for every $i \in \{1, \ldots, n+2\}$;
\item $f(A_k) = B_k$ for every $k \in \{1, \ldots, n-1\}$; and
\item $f$ commutes with the action of $G$.
\end{enumerate}


\end{definition}


\begin{theorem}
\label{associativity_action}
Let $\mathcal{H} = (I, \ldots, P, Q)$ be a connected $n$-ary quasigroupoid such that $|I| \geq n+3$, and $\mathcal{H}$ is equipped with an $(n+2)$-homogeneous regular action of a group $G$.

\begin{enumerate}
\item 
If $n$ is odd, then there is some $g \in G$ such that if we define a new relation $Q_g$ on $P$ by $$Q_g(p_1, \ldots,  p_{n+1}) \Leftrightarrow Q(p_1, \ldots, p_n, p_{n+1} - g),$$ then the quasigroupoid $\mathcal{H}_g = (I, \ldots, P, Q_g)$ is a polygroupoid.
\item 
If $n$ is even, then $\mathcal{H}$ is a polygroupoid.
\end{enumerate}
\end{theorem}

\begin{proof}
Let $\rho:=(p_1,\dots,p_{n+1})$ be a compatible tuple of elements of $P$. Define the \emph{defect} of $\rho$, denoted $d(\rho)$, to be the unique element $g\in G$ such that $Q(p_1,\dots,p_n,p_{n+1}+g)$ holds. Let $\tau=(\rho^1,\dots,\rho^{n+2})$ be a sequence of compatible tuples $\rho^i=(p^i_1,\dots,p^i_{n+1})$ such that $p^i_j=p^{j+1}_i$ for $1\le i\le j\le n+1$. Define the defect of $\tau$ to be $d(\tau):=\sum_{i=1}^{n+2} (-1)^i d(\rho^i)$. 

It is clear that for every $g\in G$, there is a compatible tuple $\rho$ with $d(\rho)=g$. A key observation is that the defect of a \emph{sequence} of compatible tuples has to be unique.

\begin{claim}
\label{Q_defect}
Let $\mathcal{H} = (I, \ldots, P, Q)$ be a connected $n$-ary quasigroupoid, $|I| \geq n+2$, and suppose that $\mathcal{H}$ is equipped with an $(n+2)$-homogeneous regular action of a group $G$. There is a single element $g\in G$ such that $d(\tau)=g$ for every sequence $\tau=(\rho^1,\dots,\rho^{n+2})$ of compatible tuples $\rho^i=(p^i_j)_{1\le j\le n+1}$ such that $p^i_j=p^{j+1}_i$ for $1\le i\le j\le n+1$.
\end{claim}

\begin{proof}
By Claim~\ref{horn_fill}, there is at least one sequence $\tau=(\rho^1,\dots,\rho^{n+2})$ of compatible tuples, $\rho^i=(p^i_j)_{1\le j\le n+1}$, such that $p^i_j=p^{j+1}_i$. Let $g:=d(\tau)$. We will show that $d(\tilde\tau)=g$ for any other sequence $\tilde\tau=(\tilde\rho^1,\dots,\tilde\rho^{n+2})$ of compatible tuples, $\tilde\rho^i=(\tilde p^i_j)_{1\le j\le n+1}$,  such that $\tilde p^i_j=\tilde p^{j+1}_i$ for $1\le i\le j\le n+1$. Now given such a second sequence $\widetilde{\tau}$ of tuples, we have the two compatible systems $(A_1, \ldots, A_{n-1})$ where $A_k$ is the image of $\tau$ in $P_k$ under the $\pi$ maps, and $(B_1, \ldots, B_{n-1})$, which is similarly defined using images of $\widetilde{\tau}$ under the $\pi$ maps. By $(n+2)$-homogeneity, there is an isomorphism $f: \cl(\tau) \rightarrow \cl(\widetilde{\tau})$ which preserves the action of $G$, and clearly $d(f(\tau)) = d(\tau)$. Thus to prove the general case of Claim~\ref{Q_defect}, it is sufficient to consider the case where $\pi^n(p^i_j) = \pi^n(\tilde p^i_j)$ for every $i, j$.

By the transitivity of the action of $G$ on $\pi^n$-fibers, there is a system of elements $g^i_j\in G$ such that $\tilde p^i_j=p^i_j+g^i_j$. Note that we have $g^i_j=g^{j+1}_i$. Then for each $i=1,\dots,n+2$ we have, by the definition of $d(\rho^i)$:
$$
Q(p^i_1,\dots,p^i_n,p^i_{n+1}+d(\rho^i))\quad \textrm{and}\quad 
Q(p^i_1+g^i_1,\dots,p^i_n+g^i_n,p^i_{n+1}+g^i_{n+1}+d(\tilde \rho^i)).
$$
Thus, by Proposition~\ref{alt_2}(3), we get $(-1)^{n}(d(\tilde\rho^i)-d(\rho^i))=\sum_{j=1}^{n+1} (-1)^jg^i_j$ for each $i=1,\dots,n+2$. The equality $d(\tilde\tau)=d(\tau)$ now follows from the standard ``$\bd^2=0$'' combinatorics:
$$
(-1)^n(d(\tilde\tau)-d(\tau))=\sum_{i=1}^{n+2}(-1)^i\sum_{j=1}^{n+1}(-1)^j g^i_j
=\sum_{i=1}^{n+1}\sum_{j=i}^{n+1}(-1)^{i+j}g^i_j+ \sum_{i=2}^{n+2}\sum_{j=1}^{i-1}(-1)^{i+j}g^i_j=0;
$$
the last equality follows since
$$
\sum_{i=1}^{n+1}\sum_{j=i}^{n+1}(-1)^{i+j}g^i_j=-\sum_{i=1}^{n+1}\sum_{j=i}^{n+1}(-1)^{i+j+1}g^{j+1}_i
=-\sum_{i=2}^{n+2}\sum_{j=1}^{i-1}(-1)^{i+j}g^i_j.
$$
\end{proof}

Let us call the element $g$ constructed in the above claim \emph{the defect of $Q$}. Note that $Q$ is associative if and only if the defect of $Q$ is~0. The rest of the argument proceeds as follows. In the case when $n$ is odd and $g$ is the defect of $Q$, we show that the relation $Q_g$ has defect~0. In the case when $n$ is even, we show that the defect of $Q$ must be~0.

(1) Let $n$ be odd and let $g$ be the defect of $Q$. Let $\{p^i_j\mid 1\le i\le n+2,\ 1\le j\le n+1\}$ be elements constructed in Claim~\ref{horn_fill}. Then we have 
$$
Q(p^1_1,\dots,p^1_n,p^1_{n+1}-g) 
\quad\textrm{and}\quad 
Q(p^i_1,\dots,p^i_{n+1}), \textrm{ for }i=2,\dots, n+2.
$$
Define $Q_g$ by 
$$
Q_g(p_1,\dots,p_{n+1})\iff Q(p_1,\dots,p_{n+1}-g).
$$
Note that the quasigroupoid $\mathcal{H}_g = (I, \ldots, P, Q_g)$ satisfies the assumptions of Claim~\ref{Q_defect}. Therefore, it is enough to check that the defect $d_{g}$ of $Q_g$ is 0 for the sequence $\tau=(\rho^1,\dots,\rho^{n+2})$, where $\rho^i=(p^i_1,\dots,p^i_{n+1})$. We see that $d_g(\rho^1)=0$ and $d_g(\rho^i)=g$ for $i=2,\dots,n+2$. Thus, the defect $d_g(\tau)=0+\sum_{i=2}^{n+2}(-1)^i g=0$; the last equality holds since the sum has an even number of terms.

(2)  
Let $a_1, \ldots, a_{n+3}$ be any $(n+3)$ distinct elements of $I$. 
First, using Claim~\ref{face_fill}, we find a maximal compatible system $(B_1,\dots,B_n)$ such that $B_1=\{a_1,\dots,a_{n+3}\}$. This gives the tuples $\tau^k=(\rho^{k,1},\dots,\rho^{k,n+2})$, where $\rho^{k,j}$ is the sequence of $n+1$ compatible tuples in $B_n$ that project onto the subsequences of $\bd_j(\bd_k(a_1,\dots,a_{n+3}))$
of length $n$. By this definition, the $i$th element of $\tau^k$ is equal to the $k$th element of $\tau^{i+1}$ for $1\le k\le i\le n+2$. 

Now we consider the element $h=\sum_{k=1}^{n+3}(-1)^k d(\tau^k)$. We calculate $h\in G$ in two ways. On the one hand, the $i$th term in $d(\tau_k)$ is equal to $k$th term in $d(\tau^{i+1})$ up to the sign $(-1)^{i+k}$. Thus, in $h$ these terms appear with the opposite signs, and we have $h=0$. On the other hand, by the Claim above, all the defects $d(\tau^k)$ are equal to the same value $g$. Therefore we can cancel out all of the constituent terms of $h$ in pairs except for one (since $n+3$ is odd), and $h = \pm g$. Thus $g=0$, which implies that $Q$ is associative.
\end{proof}

\section{Failure of $(n+1)$-uniqueness implies interpretability of an $n$-ary polygroupoid over a Morley sequence}

In this section, we will establish the main theorem of the paper (Theorem~\ref{polygroup_definability}): a nontrivial connected finitary $n$-ary polygroupoid can be defined (relative to a Morley sequence) in any stable theory which has $k$-uniqueness for all $k \in \{2, \ldots, n\}$  but which fails $(n+1)$-uniqueness. This will be done in two steps: first, we show how to define and $n$-ary quasigroupoid (Proposition~\ref{symm_wit_existence}); then, we show that there is a finite nontrivial abelian group $G$ with a regular, $(n+2)$-homogeneous action on the quasigroupoid (see Definition~\ref{group_comp} and Lemma~\ref{alt}) and apply Theorem~\ref{associativity_action} to obtain associativity.


\begin{definition}
\label{rel_definable}
If $A \subseteq \C$, then a set $X \subseteq \C $ is \emph{relatively $A$-definable} if it is definable in the expansion $\mathfrak{C}_A$ with a single new unary predicate symbol naming the set $A$.

\end{definition}

\begin{definition}
\label{rel_def_quasigr}
Let $A\subseteq \C$. A quasigroupoid $\mathcal{H}=(P_1,\dots,P_n,Q)$ is \emph{relatively $A$-definable} if the sets $P_i$, $i=1,\dots,n$, the projections $\pi^i$, $i=2,\dots,n$, as well as the relation $Q$ are relatively $A$-definable.
\end{definition}

\begin{theorem}
\label{polygroup_definability}
Let $T$ be a stable theory, $n \geq 2$, $A = \acl(A)$ a small subset of $\C$, and suppose that $T$ has $k$-uniqueness for every $k \in \{2, \ldots, n\}$. Then $T$ does \emph{not} have $(n+1)$-uniqueness over $A$ if and only if there some infinite Morley sequence $I$ over $A$ such that:

\begin{enumerate}
\item There is a relatively $I$-definable $n$-ary polygroupoid $\mathcal{H} = (I, P_2, \ldots, P_n, Q)$ (that is, with $P_1$ equal to the Morley sequence $I$ and each of the sets $P_2, \ldots, P_n$ and $Q$ relatively $I$-definable);
\item $\mathcal{H}$ is locally finite and connected;
\item There is a finite nontrivial $I$-definable group $G$ with a relatively $I$-definable regular action on $\mathcal{H}$;
\item For every two elements $f, g \in P_n$ such that $\pi^n(f) = \pi^n(g)$, if $f, g \in P_n(a_1, \ldots, a_n)$ where $(a_1, \ldots, a_n) \in I^{(n)}$, then there is some $\varphi \in \Aut(\acl_A(a_1, \ldots, a_n) / \bd_A(a_1, \ldots, a_n))$ such that $\varphi(f) = g$.

\end{enumerate}
\end{theorem}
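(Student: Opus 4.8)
The plan is to prove both implications, with the forward direction (failure of $(n+1)$-uniqueness $\Rightarrow$ a polygroupoid) carrying the bulk of the work and following the two-step outline of the section. The first move is to convert the hypothesis into a concrete combinatorial witness. Since $T$ has $(\leq n)$-uniqueness, Fact~\ref{Bn_uniqueness} replaces ``$T$ fails $(n+1)$-uniqueness over $A$'' by ``$T$ fails $B(n+1)$ over $A$,'' and Fact~\ref{Bn_equivalent} then produces an $A$-independent tuple $(c_1,\dots,c_{n+1})$ together with an element $e$ lying in $\bd_{A\cup\{c_{n+1}\}}(c_1,\dots,c_n)\cap\acl_A(c_1,\dots,c_n)$ but not in $\bd_A(c_1,\dots,c_n)$, equivalently an automorphism in $\Aut(\acl_A(\bar{c})/\bd_A(\bar{c}))$ that cannot be extended to fix $\bd_{A\cup\{c_{n+1}\}}(\bar{c})$. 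Because $\{c_1,\dots,c_n\}\subseteq\bd_A(\bar{c})$ for $n\ge 2$, we have $\acl_A(\bar{c})=\acl(\bd_A(\bar{c}))$, so the $\Aut(\C/\bd_A(\bar{c}))$-orbit of $e$ is finite and nontrivial; this is the origin of both local finiteness and of the finite group $G$.

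For the quasigroupoid (this is Proposition~\ref{symm_wit_existence}) I would take $I$ to be an infinite Morley sequence over $A$ containing $\{c_1,\dots,c_{n+1}\}$, let the intermediate sorts $P_2,\dots,P_{n-1}$ record the ordered-face skeleton with singleton fibers (the lower-level amalgamation being rigid under $(\leq n)$-uniqueness), and define the top fiber $P_n(a_1,\dots,a_n)$ as the $\Aut(\C/\bd_A(a_1,\dots,a_n))$-orbit of the image of $e$ under the symmetric-system maps of Lemma~\ref{n_system_existence}. The relation $Q(p_1,\dots,p_{n+1})$ is declared to hold when the $p_i$ come from a single realization inside $\acl_A(a_1,\dots,a_{n+1})$; this encodes exactly the defining relation $e\in\bd_{A\cup\{a_{n+1}\}}(\cdots)$, i.e. the filler over the face omitting $a_{n+1}$ is the value pinned down by the fillers over the faces containing $a_{n+1}$. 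Coherence and compatibility are immediate from the projection definitions; \emph{uniqueness of horn-filling} holds because that definability puts the omitted filler in the definable closure of the others; \emph{connectedness} (existence of horn-fillers) is precisely $(n+1)$-existence, which holds by Proposition~\ref{uniq_exist} from $n$-uniqueness (and trivially for $n=2$, since every simple theory has $3$-existence); and the whole structure is relatively $I$-definable because both the symmetric-system maps and the orbits are.

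Next I would build $G$ and promote the quasigroupoid to a polygroupoid. The group acting on each fiber is the Galois group of the orbit of $e$ over $\bd_A(a_1,\dots,a_n)$, set up via Definition~\ref{group_comp} and Lemma~\ref{alt} so that the action is \emph{regular} and compatible with $Q$; by Proposition~\ref{alt_2} any such action is automatically abelian, and $G$ is nontrivial precisely because $e\notin\bd_A(\bar{c})$. The decisive input is $(n+2)$-homogeneity of the action (Definition~\ref{homog_action}): given two $(n+2)$-vertex configurations with maximal compatible systems up to $P_{n-1}$, indiscernibility of $I$ conjugates the vertices, and relative $(n,n+2)$-uniqueness—available from the $(\leq n)$-uniqueness hypotheses by Fact~\ref{skeletal_uniqueness}, with coherent face-level maps supplied by Lemma~\ref{skeletal_maps}—glues these into one isomorphism that commutes with $G$ by the canonical definition of $G$. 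Theorem~\ref{associativity_action} then delivers associativity: directly when $n$ is even, and after replacing $Q$ by the twist $Q_g$ (still relatively $I$-definable, as $g\in G$ is named over $I$) when $n$ is odd. Property~(4) is just a restatement of how $G$ acts on fibers, so it holds by construction. I expect the main obstacle to be exactly this paragraph: arranging a \emph{single} finite group acting regularly (not merely transitively) on every fiber compatibly with $Q$, and verifying $(n+2)$-homogeneity, which is where all the lower uniqueness assumptions are consumed simultaneously.

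For the converse, given the polygroupoid with properties (1)--(4), I pick distinct $c_1,\dots,c_{n+1}\in I$ and use Claim~\ref{horn_fill} to fill the horn on $\{c_1,\dots,c_{n+1}\}$ with fillers $p_1,\dots,p_n$ over the faces containing $c_{n+1}$ and a unique $f$ over $\{c_1,\dots,c_n\}$ with $Q(p_1,\dots,p_n,f)$. Each such face equals $\acl_{A\cup\{c_{n+1}\}}$ of an $(n-1)$-subtuple of $(c_1,\dots,c_n)$, so $p_1,\dots,p_n\in\bd_{A\cup\{c_{n+1}\}}(\bar{c})$. By property~(4) some $\varphi\in\Aut(\acl_A(\bar{c})/\bd_A(\bar{c}))$ sends $f$ to a different fiber element $g=f+\gamma$ with $\gamma\neq 0$, and then Proposition~\ref{alt_2}(3) gives $\neg Q(p_1,\dots,p_n,g)$. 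Were $\varphi$ extendible to fix $\bd_{A\cup\{c_{n+1}\}}(\bar{c})$ pointwise, the extension would fix every $p_i$ while sending $f\mapsto g$ and preserving $Q$, forcing $Q(p_1,\dots,p_n,g)$, a contradiction; by Fact~\ref{Bn_equivalent} this non-extendibility is the failure of $B(n+1)$, hence of $(n+1)$-uniqueness over $A$. The one subtlety here is that the relatively $I$-definable $Q$ need not be preserved by an arbitrary $\C$-automorphism; this is handled by noting that the relevant instance of $Q$ lives entirely inside $\acl_A(c_1,\dots,c_{n+1})$ and is determined over $\bd_{A\cup\{c_{n+1}\}}(\bar{c})$ by uniqueness of horn-filling.
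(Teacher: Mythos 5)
Your overall architecture matches the paper's: reduce the failure of $(n+1)$-uniqueness to the failure of $B(n+1)$, extract witness elements, build a relatively $I$-definable quasigroupoid, construct a finite abelian $G$ acting regularly, verify $(n+2)$-homogeneity, and invoke Theorem~\ref{associativity_action}; your converse is also essentially the paper's argument (both directions come down to exhibiting a witness to the failure of $B(n+1)$, and your horn-filling variant works once the $Q$-preservation point you flag is handled). However, the forward direction has two genuine gaps, both located precisely in the material you compress into a single sentence.

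First, you cannot ``take $I$ to be an infinite Morley sequence over $A$ containing $\{c_1,\dots,c_{n+1}\}$.'' The $A$-independent tuple produced by Fact~\ref{Bn_equivalent} consists of elements that need not realize the same type over $A$, so in general no Morley sequence contains it. The paper devotes Proposition~\ref{Morley_seq_witness} to exactly this point: one concatenates realizations of all the relevant types into a single tuple $a_1$, takes a Morley sequence in $\tp(a_1)$, and then \emph{transfers} the non-extendible automorphism onto that sequence by a staged amalgamation of elementary maps using relative $(n,n)$-uniqueness (Fact~\ref{skeletal_uniqueness}). This transfer is a substantive argument, and nothing in your proposal substitutes for it.

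Second, intermediate sorts ``with singleton fibers'' cannot work, and with them collapses your claim that ``the whole structure is relatively $I$-definable because both the symmetric-system maps and the orbits are.'' The orbit of $e$ under $\Aut(\C/\bd_A(a_1,\dots,a_n))$ is a finite set definable only with parameters from the infinite set $\bd_A(a_1,\dots,a_n)$, and the symmetric-system maps of Lemma~\ref{n_system_existence} are elementary bijections between infinite algebraically closed sets, not definable objects. To define the top fibers and $Q$ \emph{uniformly} in $(a_1,\dots,a_n)$, to define $G$ (Lemma~\ref{pair_types} and Definition~\ref{G_notation}), and to prove $(n+2)$-homogeneity (Lemma~\ref{strong_skel_uniq}), one needs $\tp(f/\bd(\supp(f)))$ to be isolated over \emph{finite} data attached to the faces; that finite data is exactly what the nontrivial intermediate fibers carry. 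If $P_k(\bar{a})$ is a singleton, you are asserting that the type of a top-fiber element over the infinite boundary is isolated over the bare $n$-tuple from $I$, which is false in general. This is the entire reason for the Symmetrization Lemma (Lemma~\ref{symm_lemma}) and the recursive construction of the elements $f^m_s$ in Step~2 of Proposition~\ref{symm_wit_existence}. Relatedly, the uniqueness-of-horn-filling axiom requires uniqueness in \emph{every} coordinate, i.e.\ $c_i\in\dcl(\{c_j : j\neq i\})$ for all $i$, whereas your single element $e$ gives only $e\in\dcl(d_1,\dots,d_n)$; the paper symmetrizes this via the $c'_i$ recursion in Proposition~\ref{symm_wit_existence} and clause~(5) of Lemma~\ref{symm_lemma}, a step your outline omits.
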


We emphasize that in the hypothesis of Theorem~\ref{polygroup_definability}, we assume full $k$-uniqueness for all $k \leq n$ (that is, for \emph{every} small set $B \subseteq \mathfrak{C}$, that $T$ has $k$-uniqueness over $B$), but that the conclusion of the theorem is ``local'': assuming a failure of $(n+1)$-uniqueness over some base set $A$, we get a relatively $I$-definable $n$-ary polygroupoid for some $I$ which is Morley over the same base set $A$.

The remainder of this section will be a series of lemmas and definitions whose main purpose is to establish Theorem~\ref{polygroup_definability}.

\textbf{We assume throughout the remainder of this section that $n \geq 2$ and that the theory $T$ is stable and has $k$-uniqueness for all $k \in \{2, \ldots, n\}$.} Our object is to study witnesses to the failure of $(n+1)$-uniqueness, which will be measured by the group $G$ defined below.

In the proof below, we denote by $s_i$ the ordered tuple in $[n+1]^{(n)}$ which lists the elements of $[n+1] \setminus \{i\}$ in increasing order.  For distinct $i, j \in [n+1]$, let $s_{i,j} = [n+1] \setminus \{i,j\}$, also listed in the increasing order.

\begin{lemma}
\label{symm_lemma}
(``Symmetrization Lemma'') Suppose that $\{a_1, \ldots, a_{n+1} \}$ is a Morley sequence over $A$. For any set $\{c_i : i\in [n+1]\}$ such that $c_i \in \acl_A(\{a_j : j \in s_i\})$ for each $i \in [n+1]$, there are tuples $\ov{a}_{s_{i,j}}$ enumerating $\acl_A(\{a_k : k \in s_{i,j} \})$ and elements $f_i$ for each $i \in [n+1]$ such that:

\begin{enumerate}
\item $f_i \in \acl_A(\{a_j : j\in s_i\})$;
\item if $\ov{a}_{\bd(s_i)}$ denotes the tuple $(\ov{a}_{s_{i,1}}, \ldots , \ov{a}_{s_{i, i-1}}, \ov{a}_{s_{i, i+1}}, \ldots, \ov{a}_{s_{i, n+1}})$, then for any two $i, j \in [n+1]$, $\tp(f_i, \ov{a}_{\bd(s_i)} / A) =  \tp(f_j, \ov{a}_{\bd(s_j)} / A);$
\item $c_i \in \dcl(f_i)$;
\item for every $i \in [n+1]$, if $c_i \notin \dcl(\ov{a}_{\bd(s_i)})$, then $f_i \notin \dcl(\ov{a}_{\bd(s_i)})$; and
\item if $c_i \in \dcl_A(\{c_j : j \ne i \})$ for every $i \in [n+1]$, then $f_i \in \dcl(\{f_j : j \neq i\})$ for each $i \in [n+1]$.
\end{enumerate} 
\end{lemma}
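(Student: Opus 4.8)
The plan is to produce the symmetrized elements $f_i$ by first packaging the given data $\{c_i\}$ into a single elementary map and then averaging it over the symmetric group via the machinery of Lemma~\ref{n_system_existence}. The elements $c_i$ each live in $\acl_A(\{a_j : j \in s_i\})$, but there is no reason the $c_i$ should be related to one another under permutations of the index set. The key idea is to replace each $c_i$ by an element $f_i$ coding the pair $(c_i, \text{the induced data on the boundary faces})$ in a way that is symmetric across $i$. Concretely, I would work inside the $(n)$-symmetric system for $\{a_1,\dots,a_{n+1}\}$ over $A$ guaranteed by Lemma~\ref{n_system_existence} (note $T$ has $(\le n)$-uniqueness, so this applies), obtaining tuples $\ov{a}_s$ enumerating $\acl_A(A_s)$ for $s \in [n+1]^{(\le n)}$ together with commuting elementary maps $\varphi_\sigma$ realizing each permutation $\sigma \in \mathrm{Sym}(n+1)$.

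First I would fix the ``reference index'' $n+1$ and look at $c_{n+1} \in \acl_A(\{a_j : j \in s_{n+1}\}) = \ov{A}_{s_{n+1}}$. Using the symmetric system I would transport $c_{n+1}$ to each face $s_i$ by setting $c_i' := \varphi_{\sigma_i}(c_{n+1})$, where $\sigma_i$ is the transposition (or suitable permutation) carrying $s_{n+1}$ to $s_i$; the commutativity condition~(3) of Definition~\ref{n_system} ensures these are coherently defined. But this does not yet recover the \emph{given} $c_i$, so instead I would build $f_i$ as a tuple that records $c_i$ together with the boundary tuple $\ov{a}_{\bd(s_i)}$, and whose \emph{type} over $A$ is forced to agree across $i$ by the symmetric maps: define $f_i$ to be an element of $\dcl$ of $(c_i, \ov{a}_{\bd(s_i)})$ chosen so that $\varphi_{\sigma_i}$ sends the ``reference'' $f_{n+1}$-data to the $f_i$-data. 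The symmetry requirement~(2), $\tp(f_i, \ov{a}_{\bd(s_i)}/A) = \tp(f_j, \ov{a}_{\bd(s_j)}/A)$, is exactly what the elementary maps $\varphi_\sigma$ deliver once the $f_i$ are defined by transporting a single template under the group action.

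The conditions to verify then split as follows. Condition~(1), $f_i \in \acl_A(\{a_j : j \in s_i\})$, holds because $f_i$ is built from $c_i$ and boundary tuples all lying in that algebraic closure. Condition~(3), $c_i \in \dcl(f_i)$, is arranged by including $c_i$ as an explicit coordinate of $f_i$. Condition~(4), the non-dcl statement, follows because $f_i$ projects definably onto $c_i$: if $f_i \in \dcl(\ov{a}_{\bd(s_i)})$ then so is $c_i$, giving the contrapositive. The genuinely delicate conditions are~(2) and~(5). For~(5) I would argue that the hypothesis $c_i \in \dcl_A(\{c_j : j\neq i\})$ is a symmetric, definable relation among the reference data, so applying the maps $\varphi_\sigma$ (which commute and permute the indices) preserves this dcl-relation and transports it to $\{f_j\}$; the fact that each $f_i$ is interdefinable over $A$ with $(c_i, \ov{a}_{\bd(s_i)})$ and that the boundary tuples $\ov{a}_{\bd(s_i)}$ are themselves determined by the others lets the $c$-dependence propagate to an $f$-dependence.

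\textbf{The hard part will be} simultaneously achieving the symmetry~(2) and the ``definable dependence transports'' in~(5) without destroying the \emph{specific} given elements $c_i$ (rather than some symmetrized surrogate). The tension is that the $c_i$ are arbitrary elements of their respective faces and need not themselves form a symmetric family, yet~(2) demands that the enriched objects $f_i$ look identical over $A$ from every index's perspective. Resolving this requires choosing $f_i$ to encode not just $c_i$ but the entire orbit of boundary data under the symmetric system, so that the ``asymmetry'' of the raw $c_i$ is absorbed into a coordinate that is itself permuted correctly by the $\varphi_\sigma$; verifying that the resulting $f_i$ still satisfies $c_i \in \dcl(f_i)$ (condition~(3)) while having a permutation-invariant type (condition~(2)) is the crux, and I expect it to rely essentially on the commutativity clause of Definition~\ref{n_system} together with the stability of $T$ to ensure types over $A$ are stationary and hence pinned down by the symmetric maps.
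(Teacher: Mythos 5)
Your starting point (the symmetric system of Lemma~\ref{n_system_existence}) is the same as the paper's, but two things go wrong. First, under $(\leq n)$-uniqueness that lemma yields only an $(n-1)$-symmetric system, i.e.\ coherent enumerations $\ov{a}_s$ and commuting maps for $s \in [n+1]^{(\leq n-1)}$; you claim tuples for all $s \in [n+1]^{(\leq n)}$, which would mean coherent, commuting data on the full $n$-element faces $\acl_A(A_{s_i})$. That is essentially the conclusion of the lemma being proved (and is tied to $(n+1)$-uniqueness, which fails in the intended application), so you cannot assume it. All the genuine work in the paper's proof stems from the fact that the maps $\varphi_\sigma$ cohere only on the $(n-1)$-dimensional boundary tuples --- in particular $\varphi_\sigma \circ \varphi_\tau \neq \varphi_{\sigma \circ \tau}$ in general, with equality holding only when applied to the tuples $\ov{a}_s$ --- so every transport argument must be run at the level of types over $\ov{a}_{\bd(s_i)}$ rather than by literal composition of maps.

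Second, and more fundamentally, your definition of $f_i$ is contradictory at the crucial point: an element $f_i \in \dcl(c_i, \ov{a}_{\bd(s_i)})$ with $c_i \in \dcl(f_i)$ cannot in general also be the image $\varphi_{\sigma_i}(f_{n+1})$ of a single transported template, because the maps $\varphi_\sigma$ know nothing about the arbitrary elements $c_i$. You flag this tension yourself but never resolve it. The paper's resolution is a concrete construction you do not supply: let $d_i$ be a tuple listing \emph{all} realizations of $\tp(c_i/\ov{a}_{\bd(s_i)})$, let $f_1$ list the entire orbit $\{\varphi_\sigma(d_j) : j \in [n+1],\ \sigma(j)=1\}$ (note: over \emph{all} indices $j$, not just a reference index), and set $f_i := \varphi_{\sigma_{1,i}}(f_1)$. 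Symmetry (2) is then automatic, while (3) holds because the set of all realizations of a type over the boundary is canonical: transporting $d_i$ to face $1$ and back yields an enumeration of the same set, so $c_i$ literally reappears as a coordinate of $f_i$. This closure-under-all-realizations device, not stationarity, is what reconciles (2) with (3). Finally, your treatment of (5) as ``the dcl-relation is symmetric, so it transports'' skips the real content: one first needs $d_i \in \dcl(\{d_j : j \neq i\})$, which does not follow formally from $c_i \in \dcl_A(\{c_j : j \neq i\})$ but is proved by an automorphism argument using relative $(n,n)$-uniqueness (Fact~\ref{skeletal_uniqueness}); one then needs that each constituent $\varphi_\sigma(d_j)$ of $f_i$ lands inside $f_{\sigma(j)}$ under the non-composing maps, which again requires type-level claims and a second application of relative $(n,n)$-uniqueness.
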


\begin{proof}

By Lemma~\ref{n_system_existence}, there is an $(n-1)$-symmetric system for $\{a_1, \ldots, a_{n+1}\}$ over $A$. Fix tuples $\ov{a}_s$ for $s \in [n+1]^{(\leq n-1)}$ and elementary maps $\{\varphi_\sigma : \sigma \in \textup{Sym}([n+1]) \}$ as in Definition~\ref{n_system}.

For any two $i, j \in [n+1]$, let $\sigma_{i,j}$ be the unique permutation of $[n+1]$ which sends the tuple $s_i$ to the tuple $s_j$, and for ease of notation let $\varphi_{i,j} := \varphi_{\sigma_{i,j}}$. In particular, note that $\varphi_{i,j}(\ov{a}_{\bd(s_i)}) = \ov{a}_{\bd(s_j)}$, and so $\tp(\ov{a}_{\bd(s_i)} / A) = \tp(\ov{a}_{\bd(s_j)} / A)$.

For each $i \in [n+1]$, let $d_i$ be a finite tuple listing all realizations of $ \tp(c_i / \ov{a}_{\bd(s_i)})$. Let $f_1 \in \acl_A(\{a_j : j \neq 1\})$ be a finite tuple listing all of the elements of $$\{ \varphi_\sigma(d_j) : \sigma \textup{ is a permutation of } [n+1] \textup{ such that } \sigma(j) = 1 \},$$ and for every $i \in \{2, \ldots, k+1\}$, let $f_i = \varphi_{\sigma_{1,i}}(f_1)$. This immediately implies that $f_i \in \acl_A(\{a_j : j \neq i\})$ and $$\tp(f_1, \ov{a}_{\bd(s_1)} / A) = \tp(f_i, \ov{a}_{\bd(s_i)} / A),$$ so properties (1) and (2) of the Lemma hold.


Before proving (3), (4), and (5), we establish some preliminary claims. For the rest of the proof of the Lemma, we will assume that for every $i \in [n+1]$, $c_i \in \dcl_A(\{c_j : j \neq i\}).$

\begin{claim}
\label{closure_types}
If $e \in f_i$ and $\tp(e' / \ov{a}_{\bd(s_i)}) = \tp(e / \ov{a}_{\bd(s_i)})$, then $e' \in f_i$.
\end{claim}

\begin{proof}
Since $\tp(f_1, \ov{a}_{\bd(s_1)} / A) = \tp(f_i, \ov{a}_{\bd(s_i)} / A),$ it suffices to prove the Claim for $f_1$. But this is clear, since $f_1$ comprises of tuples enumerating all the realizations of certain types over $\ov{a}_{\bd(s_1)}$.
\end{proof}

It is worth emphasizing that $\varphi_{\sigma} \circ \varphi_{\tau}$ is not necessarily equal to $\varphi_{\sigma \circ \tau}$, so there is an asymmetry in our definitions: $f_1$ is defined differently than the other $f_i$'s. On the other hand, since for every $i$ we have $$\varphi_{\sigma \circ \tau} (\ov{a}_{\bd(s_i)}) = \varphi_\sigma \left( \varphi_\tau(\ov{a}_{\bd(s_i)}) \right),$$ we immediately obtain:

\begin{claim}
\label{phi_sigma_maps}
For any two permutations $\sigma, \tau$ of $[n+1]$ and any $d \in \acl_A(\{a_j : j \neq i\})$, if $\sigma(\tau(i)) = k$, then $$\tp( \varphi_{\sigma \circ \tau}(d), \ov{a}_{\bd(s_k)} / A) = \tp( \varphi_\sigma ( \varphi_\tau ( d)), \ov{a}_{\bd(s_k)} / A).$$
\end{claim}

\begin{claim}
\label{d_i}
For every $i \in [n+1]$, $d_i \in f_i$.
\end{claim}

\begin{proof}
By applying the elementary map $\varphi_{i,1}$, we obtain $$\tp(d_i, f_i, \ov{a}_{\bd(s_i)} / A) = \tp(\varphi_{i,1}(d_i), f_1, \ov{a}_{\bd(s_1)} / A).$$ But $\varphi_{i,1}(d_i) \in f_1$ by the very definition of $f_1$, and therefore $d_i \in \dcl(f_i)$.

\end{proof}

By the last Claim above, $c_i \in d_i \in f_i$, establishing (3). Property (4) follows immediately from (3).

\begin{claim}
\label{interdef_d}
For every $i \in [n+1]$, $d_i \in \dcl(\{d_j : j \neq i\})$.
\end{claim}

\begin{proof}
Fix some $i \in [n+1]$ and some $c'_i \in d_i$ such that $\tp(c'_i / \ov{a}_{\bd(s_i)}) = \tp(c_i / \ov{a}_{\bd(s_i)})$. Pick any other $j \in [n+1] \setminus \{i\}$, and we claim that there is an element $c'_j$ of $d_j$ (that is, $\tp(c'_j / \ov{a}_{\bd(s_j)}) = \tp(c_j / \ov{a}_{\bd(s_j)})$) such that $c'_i \in \dcl (\{c'_j \} \cup \{c_k : k \neq i, j \})$. (Apply relative $(n,n)$-uniqueness over the base set $\acl(a_j)$ to construct an elementary map $\varphi$ which fixes every tuple $\ov{a}_{\bd(s_k)}$ pointwise, fixes every $c_k$ such that $k \neq i, j$, and sends $c_i$ to $c'_i$; then let $c'_j$ be $\varphi(c_j)$.)
\end{proof}

\begin{claim}
\label{phi_map_preservation}
For any permutation $\sigma$ of $[n+1]$ and any element $e$ of the tuple $f_i$, $\varphi_\sigma(e)$ is an element of the tuple $f_{\sigma(i)}$.
\end{claim}

\begin{proof}
Without loss of generality, we consider an element $e \in f_i$ such that for some $k \in [n+1]$ and some permutation $\tau$ of $[n+1]$ such that $\tau(k) = 1$, $$\tp(e, \ov{a}_{\bd(s_i)} / A) = \tp(\varphi_\tau(d_k), \ov{a}_{\bd(s_1)} / A).$$ Applying the elementary map $\varphi_{1,i}$ plus Claim~\ref{phi_sigma_maps}, we conclude $$\tp(e, \ov{a}_{\bd(s_i)} / A) = \tp( \varphi_{\sigma_{1, i} \circ \tau} (d_k), \ov{a}_{\bd(s_i)} / A).$$ Now fix any permutation $\sigma$ of $[n+1]$ and say $\sigma(i) = j$. Then $$\tp(e, \ov{a}_{\sigma^{-1}(\bd(s_j))} / A) = \tp( \varphi_{\sigma_{1, i} \circ \tau} (d_k), \ov{a}_{\sigma^{-1}(\bd(s_j))} / A).$$ (This is because $\ov{a}_{\sigma^{-1}(\bd(s_j))}$ is a permutation of $\ov{a}_{\bd(s_i)}$ and hence any elementary map $\psi$ such that $\psi(e) =  \varphi_{\sigma_{1, i} \circ \tau} (d_k)$ and $\psi$ fixes $\ov{a}_{\bd(s_i)} \cup A$ pointwise will also witness the equality of the two displayed types above.) Applying the elementary map $\varphi_\sigma$ to the two types above and citing Claim~\ref{phi_sigma_maps}, we obtain $$\tp(\varphi_\sigma(e), \ov{a}_{\bd(s_j)} / A) = \tp(\varphi_{\sigma \circ \sigma_{1,i} \circ \tau}(d_k), \ov{a}_{\bd(s_j)} / A) = \tp(\varphi_{\sigma_{j,1} \circ \sigma \circ \sigma_{1,i} \circ \tau}(d_k), \ov{a}_{\bd(s_1)} / A).$$ But by the definitions of $f_1$ and of $f_j$, the element $\varphi_{\sigma_{j,1} \circ \sigma \circ \sigma_{1,i} \circ \tau}(d_k)$ is in $f_1$ and hence the equality of types displayed above implies that $\varphi_\sigma(e) \in f_j$.

\end{proof}



\begin{claim}
\label{f_1}
Any element $\varphi_\sigma(d_j)$ of $f_1$ is in $\dcl(f_2, \ldots, f_{n+1})$.
\end{claim}

\begin{proof}
By Claim~\ref{interdef_d}, $d_j \in \dcl(\{d_k : k \neq j\})$. By Claim~\ref{d_i}, $d_k \in f_k$, so $d_j \in \dcl(\{f_k : k \neq j\})$. This definability relation is clearly preserved under application of the elementary map $\varphi_\sigma$, and so Claim~\ref{f_1} now follows by Claim~\ref{phi_map_preservation}.

\end{proof}

Finally, we show that for any $i \in [n+1]$, $f_i \in \dcl(\{f_j : j \neq i\})$. Fix some $e \in f_i$ such that $$\tp(e, \ov{a}_{\bd(s_i)} / A) = \tp(\varphi_\sigma(d_k), \ov{a}_{\bd(s_1)} / A) $$ for some permutation $\sigma$ of $[n+1]$. Therefore $$\tp(e, \ov{a}_{\bd(s_i)} / A) = \tp(\varphi_{1,i} ( \varphi_\sigma(d_k)), \ov{a}_{\bd(s_i)} / A),$$ and applying the same argument by relative $(n,n)$ uniqueness as in the proof of Claim~\ref{interdef_d}, it suffices to show that $\varphi_{1,i} ( \varphi_\sigma(d_k)) \in \dcl(\{f_j : j \neq i\})$.

By Claim~\ref{f_1}, $\varphi_\sigma(d_k) \in \dcl(f_2, \ldots, f_{n+1})$. Applying the elementary map $\varphi_{1,i}$ and Claim~\ref{phi_map_preservation}, we conclude that $\varphi_{1,i} (\varphi_{\sigma}(d_k)) \in \dcl(\{ f_j : j \neq i \})$, and by the previous paragraph, we are finished with the proof of the Lemma.

\end{proof}

\begin{proposition}
\label{Morley_seq_witness}
Suppose that $T$ is stable and has $k$-uniqueness for every $k \leq n$ but fails to have $(n+1)$-uniqueness. Then there is a small algebraically closed set $B$, a Morley sequence $a_1\dots a_{n+1}$ over $B$, and elements $$c_i \in \acl_B(\{a_1, \ldots, a_{n+1}\} \setminus \{a_i\})$$ such that $$c_{n+1} \in \dcl_B( \{c_1, \ldots, c_n\}) \setminus \bd_B(a_1, \ldots, a_n).$$
\end{proposition}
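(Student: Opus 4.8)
The plan is to translate the failure of $(n+1)$-uniqueness into a single boundary inequality, upgrade the witnessing configuration to a Morley sequence, and then read off the elements $c_i$ directly from the definition of $\bd$. First I would apply Fact~\ref{Bn_uniqueness} with $n+1$ in place of $n$: since $T$ is stable and has $n$-uniqueness, it has $(n+1)$-uniqueness if and only if it has $B(n+1)$, so the hypothesis gives that $T$ fails $B(n+1)$. Unwinding Definition~\ref{Bn} (and replacing the witnessing base by its algebraic closure, which changes neither $\acl$ nor $\bd$), there is a small $A = \acl(A)$ and an $A$-independent set $\{a_1, \ldots, a_{n+1}\}$ with
\[ \bd_{A \cup \{a_{n+1}\}}(a_1, \ldots, a_n) \cap \acl_A(a_1, \ldots, a_n) \supsetneq \bd_A(a_1, \ldots, a_n), \]
and I fix an element $c$ lying in the left-hand side but outside $\bd_A(a_1, \ldots, a_n)$.

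The main obstacle is that this witnessing set is merely $A$-independent and need not be a Morley sequence: the types $\tp(a_i / A)$ may differ, whereas the conclusion (and every downstream construction, which treats $I = P_1$ as a genuine indiscernible set) requires the $a_i$ to realize a common type. The naive fix of enlarging the base until the types agree is exactly what one must \emph{not} do, since the very failure of $B(n+1)$ means that adjoining parameters to the base can enlarge $\bd(a_1, \ldots, a_n)$ and absorb the witness $c$ — this is the pathology flagged in the Remark following the definition of $n$-existence over $A$. So the heart of the argument is to show that a failure of $B(n+1)$ witnessed by \emph{some} $A$-independent set can always be relocated to a failure witnessed by a Morley sequence $\{a_1, \ldots, a_{n+1}\}$ over a suitable $B = \acl(B)$. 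My plan is to exploit the freedom to choose the base $B$: rather than modifying the given configuration, I would seek the same obstruction among $n+1$ independent realizations of a single type, using stability together with $(\le n)$-uniqueness — via the symmetric systems of Lemma~\ref{n_system_existence} and relative uniqueness, Fact~\ref{skeletal_uniqueness} — to guarantee that such a homogeneous configuration still violates the boundary equation and that the new witness is not swallowed by $\bd_B$. Establishing this reduction to the symmetric case is where essentially all the difficulty lies.

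Granting a Morley-sequence witness $\{a_1, \ldots, a_{n+1}\}$ over $B = \acl(B)$ together with $c \in \acl_B(a_1, \ldots, a_n) \setminus \bd_B(a_1, \ldots, a_n)$ lying in $\bd_{B \cup \{a_{n+1}\}}(a_1, \ldots, a_n)$, the rest is a direct computation. By definition
\[ \bd_{B \cup \{a_{n+1}\}}(a_1, \ldots, a_n) = \dcl\left( \bigcup_{i=1}^{n} \acl_{B \cup \{a_{n+1}\}}(\{a_j : j \in [n] \setminus \{i\}\}) \right), \]
and for each $i \le n$ one has $\acl_{B \cup \{a_{n+1}\}}(\{a_j : j \in [n] \setminus \{i\}\}) = \acl_B(\{a_j : j \in [n+1] \setminus \{i\}\})$. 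Hence, by the finite character of $\dcl$, there are elements $c_i \in \acl_B(\{a_j : j \ne i\})$ for $i = 1, \ldots, n$ (finite tuples, or single imaginaries since $T = T^{eq}$) with $c \in \dcl(c_1, \ldots, c_n)$. Setting $c_{n+1} := c$ then gives $c_i \in \acl_B(\{a_j : j \ne i\})$ for every $i$ (the case $i = n+1$ being $c \in \acl_B(a_1, \ldots, a_n)$), while $c_{n+1} \in \dcl_B(c_1, \ldots, c_n)$ and $c_{n+1} \notin \bd_B(a_1, \ldots, a_n)$, which is exactly what is required.
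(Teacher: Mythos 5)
Your first and last steps are fine: the reduction to a failure of $B(n+1)$ via Fact~\ref{Bn_uniqueness} is exactly how the paper begins, and your extraction of $c_1,\ldots,c_{n+1}$ at the end — using that $\acl_{B\cup\{a_{n+1}\}}(\{a_j : j\in[n]\setminus\{i\}\}) = \acl_B(\{a_j : j\in[n+1]\setminus\{i\}\})$ together with finite character of $\dcl$ — is a correct (and slightly more direct) variant of what the paper does. But the middle step, relocating the failure of $B(n+1)$ from an arbitrary $A$-independent set to a Morley sequence, is the entire content of the proposition, and you leave it as a stated intention ("seek the same obstruction among $n+1$ independent realizations of a single type, using symmetric systems and relative uniqueness") rather than an argument. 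That is a genuine gap: everything you actually prove is routine, and the part you defer is, as you yourself say, where all the difficulty lies.

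Moreover, the sketch you give points toward an approach that hits a real obstacle, one the paper's proof is specifically engineered to avoid. The paper does not transfer the \emph{element} $c$; it transfers an \emph{automorphism}. It first passes to the formulation of Fact~\ref{Bn_equivalent}: a map $\varphi\in\Aut(\acl(b_1,\ldots,b_n)/\bd(b_1,\ldots,b_n))$ admitting no extension fixing $\acl(\{b_j : j\neq i\})$ pointwise for $i=1,\ldots,n$. It then fattens the configuration: each $a_i$ is taken to be a whole tuple realizing $q=\tp(b_1\cdots b_{n+1})$, with $\{a_1,\ldots,a_{n+1}\}$ independent, and by stationarity one may arrange that the diagonal $(a_1^1,a_2^2,\ldots,a_{n+1}^{n+1})$ equals $(b_1,\ldots,b_{n+1})$; this is what produces a Morley sequence containing the original data. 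Next, $\varphi$ is lifted by a two-stage amalgamation using relative $(n,n)$-uniqueness (Fact~\ref{skeletal_uniqueness}) to a map $\varphi'\in\Aut(\acl(a_1,\ldots,a_n)/\bd(a_1,\ldots,a_n))$ with $\varphi\subseteq\varphi'$, and the key point is that non-extendability now transfers \emph{for free}: any extension of $\varphi'$ fixing the sets $\acl(\{a_j:j\neq i\})$ pointwise would in particular fix the smaller sets $\acl(\{b_j:j\neq i\})$ pointwise and extend $\varphi$, a contradiction. By contrast, in your element-based plan one must show that the original witness $c$ (or some replacement) still avoids $\bd_B(a_1,\ldots,a_n)$ for the new configuration, and this is not automatic: after fattening, the boundary $\dcl\bigl(\bigcup_{i\le n}\acl_B(\{a_j : j\le n,\ j\neq i\})\bigr)$ is far larger than $\bd_A(b_1,\ldots,b_n)$, since each fattened $a_j$ carries copies of all of $b_1,\ldots,b_{n+1}$; the witness can in principle be swallowed — precisely the pathology you flag. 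Indeed, the paper never claims the original witness survives: it produces \emph{new} elements $d_i$ from the non-extendability of $\varphi'$ and takes $c_{n+1}$ to be a code for the finite set of realizations of $\tp(d_{n+1}/d_1,\ldots,d_n)$. The switch to the automorphism formulation plus the diagonal/fattening trick is the missing idea in your proposal.
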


\begin{proof}
Since $(n+1)$-uniqueness fails and $\le n$-uniqueness holds, by Fact~\ref{Bn_uniqueness} above, we conclude that the property $B(n+1)$ must also fail. We use the equivalent formulation of $B(n+1)$ given by Fact~\ref{Bn_equivalent}. 

Fix some $B$-independent set $\{b_1, \ldots, b_{n+1}\}$ and a map $\varphi$ as in Fact~\ref{Bn_equivalent}, and assume without loss of generality that the base set $B = \emptyset$ (by adding constants to the language if necessary): namely, we have $\varphi \in \Aut(\ov{B}_{[n]} / \bd(b_1, \ldots, b_n))$ cannot be extended to an automorphism that fixes  $$\ov{B}_{[n+1] \setminus \{1\}} \cup \ldots \cup \ov{B}_{[n+1] \setminus \{n\}}$$ pointwise. . We will build a Morley sequence $\{a_1,\dots,a_{n+1}\}$ and an automorphism $\varphi' \in \Aut(\ov{A}_{[n]} / \bd(a_1, \ldots, a_n))$ which cannot be extended to an automorphism that fixes $\ov{A}_{[n+1] \setminus \{1\}} \cup \ldots \cup \ov{A}_{[n+1] \setminus \{n\}}$ pointwise.

Let $a_1$ be a tuple $a_1^1\dots a_1^{n+1}$ such that $q:=\tp(a_1^1\dots a_1^{n+1})=\tp(b_1 \ldots b_{n+1})$. Take a set $\{a_2,\dots,a_{n+1}\}$ of realizations of the type $q$ such that the set $\{a_1,\dots,a_{n+1}\}$ is independent. By stationarity, the type of the sequence $(a^1_1,a^2_2,\dots,a^{n+1}_{n+1})$ is also
$q$. Thus, we may assume that $(a^1_1,\dots,a^{n+1}_{n+1}) = (b_1\dots,b_{n+1})$.


It remains to build the map $\varphi' \in \Aut(\ov{A}_{[n]})$ using relative $(n,n)$-uniqueness by amalgamating the map $\varphi$ and the identity automorphisms on the algebraic closures of tuples $a_1,\dots,\widehat{a_i},\dots,a_n$, where $i=1,\dots, n$. 
The construction of $\varphi'$ has two stages. For the first stage, we perform a series of amalgamations to build a system of maps $\langle \varphi''_s : s \subseteq \{1, \ldots, n\} \rangle$ such that: 

\begin{enumerate}
\item If $s \subseteq \{1, \ldots, n-1\}$, then $\varphi''_s$ is the identity map on $\overline{a}_s$;
\item If $s = s_0 \cup \{n\}$ where $s_0 \subseteq \{1, \ldots, n-1\}$, then $\varphi''_s$ is an elementary permutation of $\acl(\overline{a}_{s_0} \cup b_n)$ which extends $\varphi$; and
\item If $s \subseteq t$, then $\varphi''_s \subseteq \varphi''_t$.
\end{enumerate}

The construction of the maps $\varphi''_s$ for $|s| > 1$ goes by induction on $|s|$, always amalgamating over the base set $\acl(b_1, \ldots, b_{n-1})$ and considering $\{a_1, \ldots, a_{n-1}, b_n\}$ to be the set of independent ``vertices:'' if we consider $s = \{s_1, \ldots, s_k\} \subseteq \{1, \ldots, n\}$, then once we have the $\varphi''_t$ for all $|t| < |s|$, construct $\varphi''_s$ using relative $(k,k)$-uniqueness to find a common extension of the maps $\langle \varphi''_t : |t| < k \rangle$.

Then let $\varphi'' := \varphi''_{\{1, \ldots, n\}} \in \Aut(\acl(a_1,\dots,a_{n-1},b_{n}))$. Note that $\varphi'' \restriction \acl(b_1,\dots,b_{n}) = \varphi$ and for all
$i\in 1,\dots, n-1$ we have $\varphi''\restriction \acl(a_1,\dots,\widehat{a_i},\dots,a_{n-1},b_{n})$ is an identity automorphism.

Now we obtain $\varphi'$ using relative $(n,n)$-uniqueness over $\acl(b_n)$ on the system of $n$ compatible automorphisms given by $\varphi''$ plus the $(n-1)$ the identity automorphisms on $\ov{A}_{[n] \setminus \{i\}}$ for $i=1,\dots, n-1$.

Finally, we note that it is impossible to amalgamate $\varphi'$ with the identity automorphisms on $\ov{A}_{[n+1] \setminus \{i\}}$ for $i=1,\dots,n$ simply because $\varphi \subseteq \varphi'$ and we assumed that $\varphi$ could not be extended to an elementary map fixing $\ov{B}_{[n+1] \setminus \{1\}} \cup \ldots \cup \ov{B}_{[n+1] \setminus \{n\}}$ pointwise.

Now from the existence of the elementary map $\varphi'$, we quickly find the desired elements $c_i$: the impossibility of amalgamating $\varphi'$ with identity maps on the other ``edges'' implies that there are finite tuples $d_i \in \ov{A}_{[n+1] \setminus \{i\}}$ such that $\varphi'(d_{n+1}) \neq d_{n+1}$ and $$\tp(\varphi'(d_{n+1}) / d_1, \ldots, d_n) \neq \tp(d_{n+1} / d_1, \ldots, d_n),$$ and we let $c_{n+1}$ be a code for the finite set $X$ of all realizations of $\tp(d_{n+1} / d_1, \ldots, d_n)$ and let $c_i = d_i$ for every $i \in \{1, \ldots, n\}$; then evidently $c_{n+1} \in \acl(a_1, \ldots, a_n)$ and $c_{n+1} \in \dcl(c_1, \ldots, c_{n+1})$, but since $\varphi'(X) \neq X$ and $\varphi' $ fixes $\bd(a_1, \ldots, a_n)$ pointwise, $c_{n+1} \notin \bd(a_1, \ldots, a_n)$. 

\end{proof}

\begin{definition}
\label{symm_wit}
A \emph{symmetric witness to the failure of $(n+1)$-uniqueness (over $B = \acl(B)$)} is an $(n+1)$-element Morley sequence $I$ over $B$ and a relatively $I$-definable connected quasigroupoid $(I,P_2,\dots,P_n,Q)$ such that

\begin{enumerate}
\item (Isolation of types)
\begin{enumerate}
\item For any $i \in \{2, \ldots, n\}$ and any $f \in P_i$, the type $\tp(f / \bd(\supp(f)))$ is isolated over $\pi^i(f)$.
\item For any two elements $f, g$ of $P_n$ such that $\pi^n(f) = \pi^n(g)$, the type $\tp(f,g / \bd(\pi(f)))$ is isolated over $\pi^n(f)$.
\end{enumerate}
\item (Algebraicity) For any $i \in \{2, \ldots, n\}$ and any $f \in P_i$, $f \in \acl(\pi(f))$.

\end{enumerate}
\end{definition}

For $n=3$, the definition above is almost the same as that of a ``full symmetric witness'' from the article \cite{gkk} (see also the older Definition~2.2 of \cite{GK}), but for the purposes of the current paper we found it convenient to also assume that the types of all \emph{pairs} from $P_n$ are isolated over their projections; this condition is easy to enforce, and will help in establishing the definability of the group $G$ (see Definitions~\ref{G_notation} and \ref{group_comp} below).

\begin{proposition}
\label{symm_wit_existence}
If $T$ is stable with $k$-uniqueness for every $k \leq n$ but which does not have $(n+1)$-uniqueness, then $T$ has a symmetric failure to $(n+1)$-uniqueness.
\end{proposition}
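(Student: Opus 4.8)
The plan is to construct the symmetric witness directly from the data produced by Proposition~\ref{Morley_seq_witness}, feeding those ingredients into the Symmetrization Lemma (Lemma~\ref{symm_lemma}) to obtain symmetric elements and then assembling them into a connected quasigroupoid. First I would invoke Proposition~\ref{Morley_seq_witness} to obtain a base $B = \acl(B)$, a Morley sequence $a_1, \ldots, a_{n+1}$ over $B$, and elements $c_i \in \acl_B(\{a_j : j \ne i\})$ with $c_{n+1} \in \dcl_B(c_1, \ldots, c_n) \setminus \bd_B(a_1, \ldots, a_n)$. Applying Lemma~\ref{symm_lemma} to this data yields tuples $\ov{a}_{s_{i,j}}$ and elements $f_1, \ldots, f_{n+1}$ which are now \emph{symmetric}: by conclusion~(2) of that lemma the $f_i$ all realize the same type over their respective boundary tuples, by~(3) each $c_i \in \dcl(f_i)$, by~(4) each $f_i \notin \dcl(\ov{a}_{\bd(s_i)})$ (using that $c_{n+1}$, hence by symmetry every $c_i$, lies outside the boundary), and by~(5) the definability relation $c_i \in \dcl_A(\{c_j : j \ne i\})$ transfers to $f_i \in \dcl(\{f_j : j \ne i\})$.

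Next I would define the sorts of the quasigroupoid by extracting the relatively $I$-definable structure carried by these symmetric elements. Take $I$ to be an infinite Morley sequence over $B$ extending $\{a_1, \ldots, a_{n+1}\}$. For each $k \in \{2, \ldots, n\}$ and each $w \in I^{(k)}$, the fiber $P_k(w)$ should consist of (codes for) the finitely many realizations of the appropriate symmetric type over $\ov{a}_{\bd(w)}$ analogous to the $f_i$; the isolation conclusions of Lemma~\ref{symm_lemma} (or rather the isolation built into the $d_i$ as lists of realizations of a type over the boundary) are what make these types isolated over the projections, giving the ``Isolation of types'' axiom~(1) of Definition~\ref{symm_wit}, and algebraicity over $\pi(f)$ holds because each $f_i \in \acl_A(\ov{a}_{\bd(s_i)})$, giving axiom~(2). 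The projection maps $\pi^k$ send a fiber element over $w$ to the tuple of its boundary-restrictions, exactly matching the coherence and compatibility requirements of Definition~\ref{quasigroupoid}. The relation $Q(p_1, \ldots, p_{n+1})$ on $P = P_n$ is defined to hold precisely when the $p_j$ are the symmetric completions realizing the joint type witnessed by $f_1, \ldots, f_{n+1}$ over a single $(n+1)$-element Morley subtuple; that this is a genuine $(n+1)$-ary relation with the uniqueness-of-horn-filling property follows from algebraicity plus stationarity.

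The connectedness conditions of Definition~\ref{connected} I would verify using homogeneity of the Morley sequence: any compatible tuple in $P_{k-1}$ arises as $\pi^k$ of some element because $I$ is indiscernible, so every admissible support configuration is realized, and existence of horn fillers comes from the fact that the symmetric types can always be amalgamated along $n$ of the $n+1$ faces (this is exactly the freedom encoded in the failure of $(n+1)$-uniqueness, where $Q$ records which completions are consistent). That $Q$ is nontrivial—i.e. that it genuinely constrains—is precisely the content of $c_{n+1} \notin \bd(a_1, \ldots, a_n)$: the extra element $f_{n+1}$ is not determined by the boundary data, so the horn-filling is not vacuous. Relative definability over $I$ of all sorts, projections, and $Q$ follows because every defining type is isolated over its projection (a finite tuple from the support inside $I$), so each fiber and the relation $Q$ are cut out by a formula with parameters from $I$.

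The main obstacle I expect is verifying the ``Isolation of types'' axiom~(1)(b), namely that the joint type $\tp(f, g / \bd(\pi(f)))$ of a \emph{pair} of fiber elements with the same projection is isolated over $\pi^n(f)$. The single-element isolation (1)(a) is essentially built into how the $d_i$ are defined in Lemma~\ref{symm_lemma} as lists of all realizations of a type over the boundary, but the pairwise version requires arguing that once we know the projection, the relative position of two elements in the same fiber is pinned down over the boundary up to finitely many possibilities. Here I would lean on stability together with the $(\le n)$-uniqueness hypothesis via relative $(n,n)$-uniqueness (Fact~\ref{skeletal_uniqueness}), which controls how much freedom there is in moving one fiber element to another while fixing the boundary tuples; the point is that any two elements of $P_n(w)$ differ by an element of a finite set of ``differences'' that is itself algebraic over the boundary, and this finiteness is what yields isolation of the pair type. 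Making this precise—and ensuring that enforcing the pairwise isolation does not destroy the symmetry already achieved—is the delicate step, but as the remark following Definition~\ref{symm_wit} notes, the condition is ``easy to enforce,'' so I would expect it to follow by passing to a suitable finite-tuple refinement of the $f_i$ that absorbs all pairwise differences.
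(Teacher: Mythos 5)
Your overall strategy matches the paper's: invoke Proposition~\ref{Morley_seq_witness}, symmetrize via Lemma~\ref{symm_lemma}, and assemble the result into a relatively $I$-definable connected quasigroupoid. But there is a genuine gap at the point where you construct the intermediate sorts $P_2, \ldots, P_{n-1}$. You claim that the isolation axiom (1)(a) of Definition~\ref{symm_wit} is ``essentially built into'' Lemma~\ref{symm_lemma} because the $d_i$ are lists of realizations of types over the boundary. That is not what the axiom requires: it demands that $\tp(f/\bd(\supp(f)))$ be isolated over $\pi^i(f)$, i.e.\ over the \emph{finite tuple of lower-level fiber elements}, not over the (infinite) boundary tuple $\ov{a}_{\bd(s)}$. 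Lemma~\ref{symm_lemma} produces only top-level elements $f_1, \ldots, f_{n+1}$ attached to the $n$-element subsets; it says nothing about what the elements of $P_2, \ldots, P_{n-1}$ are, and gives no mechanism forcing the type of a level-$m$ element over its boundary to be implied by a formula whose parameters are just its level-$(m-1)$ projections. Without this, your fibers $P_k(w)$ are not pinned down, and the relative $I$-definability of the sorts, of the $\pi^k$, and of $Q$ --- which you derive precisely from ``every defining type is isolated over its projection'' --- has no justification; the connectedness argument collapses with it.

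The paper closes this gap with a downward recursion (its Step~2) that you are missing: for $s$ of size $m$, one uses finite character of algebraicity to choose finitely many parameters $c_1, \ldots, c_m$ in the boundary $\bd(\ov{a}_s)$ over which $\tp(f^m_s/\ov{a}_{\bd(s)})$ is isolated, and then applies the Symmetrization Lemma \emph{again} to those parameters to produce the level-$(m-1)$ elements $f^{m-1}_{s\setminus\{i\}}$; iterating from $m=n$ down to $m=2$ yields all intermediate sorts with the recursive isolation property (4) of that step. Moreover, the pairwise isolation (1)(b), which you single out as the main obstacle and propose to handle afterwards via relative $(n,n)$-uniqueness and a ``finite-tuple refinement,'' is dealt with in the paper inside this same recursion at the top level (condition (5) of Step~2): since there are only finitely many conjugates $g^n_s$ of $f^n_s$ over the boundary, one simply chooses the isolating parameters $c_1, \ldots, c_n$ to isolate all pair types simultaneously, \emph{before} symmetrizing them downward. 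Enforcing (1)(b) after the lower levels are already fixed, as you suggest, risks exactly the problem you flag --- destroying the symmetry and isolation already achieved --- which is why the order of operations matters. (A smaller slip: you say ``by symmetry every $c_i$ lies outside the boundary''; the symmetry is a property of the $f_i$, not of the $c_i$. The correct route is that $c_{n+1} \notin \bd(a_1,\ldots,a_n)$ gives $f_{n+1} \notin \dcl(\ov{a}_{\bd(s_{n+1})})$ by conclusion (4) of Lemma~\ref{symm_lemma}, and then conclusion (2) transfers this to the other $f_i$.)
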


\begin{proof}

By Proposition~\ref{Morley_seq_witness}, there is a Morley sequence $a_1\dots a_{n+1}$ over some small set $B$ and elements $c_i \in \acl_B(\{a_1, \ldots, a_{n+1}\} \setminus \{a_i\})$ such that $$c_{n+1} \in \dcl_B( \{c_1, \ldots, c_n\}) \setminus \bd_B(a_1, \ldots, a_n).$$ For simplicity of notation, we will assume in the proof below that $B = \emptyset$ (expanding the language to add constants for the elements of $B$ if necessary).

Without loss of generality, $a_j \in c_j$ whenever $i, j$ are two distinct elements of $[n+1]$. We claim that we may assume that not only $c_{n+1} \in \dcl(c_1, \ldots, c_n)$, but also that $c_i \in \dcl(\{c_j : j \neq i \})$ for each $i \in [n+1]$. This can be acheived by replacing the original elements $\{c_1, \ldots, c_n\}$ by elements $c'_i$ chosen recursively on $i$ so that $c'_i \in T^{eq}$ is a canonical name for the finite set of all realizations of $$\tp(c_i / c'_1, \ldots, c'_{i-1}, c_{i+1}, \ldots, c_{n+1})$$ (which is clearly algebraic, since $c_i$ is in the algebraic closure of $\{a_j : j \neq i \}$ and by the assumption that $c_k$ and $c'_k$ contain $a_j$ if $k \neq j$). Then an easy induction argument shows that the new elements $\{c'_1, \ldots, c'_n, c_{n+1} \}$ are interdefinable as we wish.

We apply Lemma~\ref{n_system_existence} to construct an $(n-1)$-symmetric system $\{\ov{a}_s : s \in [n+1]^{(< n)} \}$. As above, $s_{i,j}$ denotes the ordered $(n-1)$-tuple which lists the elements of $[n+1] \setminus \{i,j\}$ in increasing order and $\ov{a}_{\bd(s_i)}$ is the tuple $(\ov{a}_{s_{1,i}}, \ldots, \ov{a}_{s_{n+1, i}})$. Throughout the proof of this Lemma, we use the phrase $s \in [n+1]^{(k)}$ to mean (by abuse of notation) that $s$ is an ordered tuple listing $k$ distinct elements of $[n+1]$ in increasing order.



\textbf{Step 1:} By the Symmetrization Lemma (Lemma~\ref{symm_lemma}) applied to the elements $c_i$, there is a tuple of elements $(f^n_1, \ldots, f^n_{n+1})$ satisfying all of the following properties:

\begin{enumerate}
\item $f^n_i \in \acl(\{a_j : j \neq i\} )\setminus \bd(a_1, \ldots, \widehat{a}_i, \ldots, a_{n+1})$;
\item For each $i \in \{1, \ldots, n+1\}$, $f^n_i \in \dcl(f^n_1, \ldots, \widehat{f}^n_i, \ldots, f^n_{n+1})$; and
\item For any two $i, j \in \{1, \ldots, n+1\}$, $$(f^n_i, \ov{a}_{\bd(s_i)}) \equiv (f^n_j, \ov{a}_{\bd(s_j)}).$$
\end{enumerate}

\textbf{Step 2:} Next, for each $m \in \{1, \ldots, n-1\}$ and every $s \in [n+1]^{(m)}$, we construct elements $f^m_s$ satisfying both of the following properties (letting $f^n_i = f^n_{s_i}$ for uniformity of notation):

\begin{enumerate}
\setcounter{enumi}{3}
\item Whenever $s = (i_1, \ldots, i_k) \in [n+1]^{(k)}$ is increasing and $2 \leq k \leq n$, $\tp(f^{k}_s / \ov{a}_{\bd(s)})$ is isolated over $(f^{k-1}_{s \setminus \{i_1\}}, \ldots, f^{k-1}_{s \setminus \{i_k\}})$; 
\item If $m = n-1$ and $s = (i_1, \ldots, i_n) \in [n+1]^{(n)}$, then for any $g^n_s$ such that $$(g^n_s, \overline{a}_{\bd(s)}) \equiv (f^n_s, \overline{a}_{\bd(s)}),$$ we have that $\tp(f^n_s, g^n_s / \overline{a}_{\bd(s)})$ is isolated over $(f^m_{s \setminus \{i_1\}}, \ldots, f^m_{s \setminus \{i_n\}})$; and
\item If $|s| = |s''| = k$, then $(f^k_s, \overline{a}_{\bd(s)}) \equiv (f^k_{s'}, \overline{a}_{\bd(s')})$.
\end{enumerate}

For Step~$2$, we perform the construction of the $f^{n-\ell}_s$ by induction on $\ell$.

Given some $m \in \{2, \ldots, n\}$, suppose that we are given elements $f^m_s$ satisfying (4), (5), and (6) for every increasing $s \in [n+1]^{(m)}$.  Consider $s = (1, \ldots, m)$ and the element $f^m_s$. Pick elements $c_1, \ldots, c_m$ such that $c_i \in \bd(a_1, \ldots, \widehat{a}_i, \ldots, a_m)$ and $\tp(f^m_s / \ov{a}_{\bd(s)})$ is isolated over $(c_1, \ldots, c_m)$. In the special case where $m = n$, we can also pick the $c_i$ such that whenever $(g^n_s, \overline{a}_{\bd(s)}) \equiv (f^n_s, \overline{a}_{\bd(s)})$, the type $\tp(f^n_s, g^n_s / \overline{a}_{\bd(s)})$ is also isolated over $c_1, \ldots, c_n$ (since there are only finitely many such $g^n_s$). Now apply the Symmetrization Lemma (Lemma~\ref{symm_lemma}) to the elements $c_i$ to obtain elements $f^{m-1}_{s \setminus \{1\}}, \ldots, f^{m-1}_{s \setminus \{m\}}$ such that for any $i, j \in [m]$, $$(f^{m-1}_{s \setminus \{i\}} , \ov{a}_{s \setminus \{i\}}) \equiv (f^{m-1}_{s \setminus \{j\}} , \ov{a}_{s \setminus \{j\}}).$$ Because we also have that $c_i \in f^{m-1}_{s \setminus \{i\}}$, the isolation conditions (4) and (5) clearly hold for these elements. Finally, if $t \in [n+1]^{(m-1)}$ is any increasing tuple which is not contained in $[m]$, we let $f^{m-1}_t $ be some element such that $$(f^{m-1}_t,  \ov{a}_{\bd(t)}) \equiv (f^{m-1}_{(1, \ldots, m-1)}, \ov{a}_{\bd((a_1, \ldots, a_{m-1}))}).$$



Finally, we explain how to put the pieces together an use Steps~$1$ and $2$ to build a symmetric witness to the failure of $(n+1)$-uniqueness as in Definition~\ref{symm_wit}. For every nonempty tuple $s \in [n+1]^{(\leq n)}$, we construct a tuple $g_s$ by a recursion on $|s|$: first, the tuples $g_{(i)}$ are defined to be $f^1_{(i)}$ as above; then, for the recursion, if $s = (i_1, \ldots, i_m)$, let $$g_s = (f^{m}_s, g_{s \setminus \{i_1\}}, \ldots, g_{s \setminus \{i_m\}}).$$ Note that each member $a_i$ of the original Morley sequence is interalgebraic with $g_{(i)}$, and we let $I$ be an infinite Morley sequence in the type of $g_{(1)}$. For $i \in \{2, \ldots, n\}$, let $P^0_i(x; y_1, \ldots, y_i)$ be a formula such that $P^0_i(x; g_{(1)}, \ldots, g_{(i)})$ isolates the type of $g_{(1, \ldots, i)}$ over $(g_{(1)}, \ldots, g_{(i)})$ (this type is clearly atomic since it is algebraic), and let $P_i$ be the relatively $I$-definable set $$P_i := \{b \in \mathfrak{C} : P^0_i(b, c_1, \ldots, c_i) \textup{ for some tuple of pairwise distinct elements } c_1, \ldots, c_i \textup{ of } I\}.$$ F for each $i \in \{2, \ldots, n\}$, there is a definable map $\pi^i : P^i \rightarrow (P^{i-1})^{i}$ which sends $g_{(1, \ldots, i)}$ to the tuple $(g_{(2, \ldots, i)}, \ldots, g_{(1, \ldots, i-1)})$: in fact, $\pi^i(z)$ simply sends $z$ to the appropriate subtuple of $z$. The compatibility of the tuple $\pi^i(f)$ in the sense of Definition~\ref{comp_tuples} for any $f \in P^i$ is straightforward to check. 

Note that for any $i \in [n+1]$, the element $g_{s_i}$ is in $\dcl(\{g_{s_j} : j \neq i\})$: this is because $g_{s_i}$ consists of elements $g_{s_{i,j}}$ which are definitionally included in the other elements $g_{s_j}$, plus the element $f^n_{s_i}$, which is in the definable closure of the elements $f^n_{s_j} \in g_{s_j}$ for by Condition~(2) of Step~1. Let $Q(x_1, \ldots, x_{n+1})$ be a formula in the type of $(g_{s_1}, \ldots, g_{s_{n+1}})$ witnessing that the tuple is compatible and that any $g_{s_i}$ is definable from all of the other $g_{s_j}$'s (so that axioms (3) and (4) of Definition~\ref{quasigroupoid} are satisfied).

The property of isolation of types (condition~(1) of Definition~\ref{symm_wit}) holds because of parts~(4) and (5) of Step 2 above.

The last thing to check is that the $n$-ary quasigroupoid $(I, P_2, \ldots, P_n, Q)$ is connected. So suppose that $(p_1, \ldots, p_{n+1})$ is any compatible $(n+1)$-tuple from $P_n$. Say $(c_1, \ldots, c_{n+1})$ is the tuple from $I^{(n+1)}$ such that $\supp(p_i) = (c_1, \ldots, \widehat{c}_i, \ldots, c_{n+1})$. Since $I$ is an indiscernible set, the definition of $P_n$ gives us that for each $i$, $$(p_i, c_1, \ldots, \widehat{c}_i, \ldots, c_{n+1}) \equiv (g_{s_i}, g_{(1)}, \ldots, \widehat{g}_{(i)}, \ldots, g_{(n+1)}).$$ Now if we fix any $i \in [n+1]$,  we can use $(\leq n)$-uniqueness over the set $\acl(c_{i})$ to conclude that $$(p_1, \ldots, \widehat{p}_i, \ldots, p_{n+1}) \equiv (g_{s_1}, \ldots, \widehat{g}_{s_i}, \ldots, g_{s_{n+1}}),$$ and in particular $\exists x \left[ Q(p_1, \ldots, p_{i-1}, x, p_{i+1}, \ldots, p_{n+1}) \right]$, as required for connectedness.


\end{proof}

From this point on in the section, we fix a Morley sequence $I$ over $\emptyset = \acl^{eq}(\emptyset)$ and a relatively $I$-definable connected $n$-ary quasigroupoid $(I, P_2 \ldots, P_n, \pi^k : 2 \leq k \leq n, Q)$ which is a symmetric witness to the failure of $(n+1)$-uniqueness over $\emptyset$. Our next task is to construct the group $G$ which will act regularly on this $n$-ary quasigroupoid.

\begin{definition}
\label{symm_wit_notation}
Within the context of the fixed relatively $I$-definable $n$-ary quasigroupoid above:
\begin{enumerate}
\item If $i \in \{2, \ldots, n\}$ and $(f_1, \ldots, f_i)$ is a compatible tuple of elements of $P_{i-1}$, then $P_i(f_1, \ldots, f_i)$ denotes the set of all elements $f$ of $P_i$ such that $\pi^i(f) = (f_1, \ldots, f_i)$. 
\item If $(a_1, \ldots, a_i) \in I^{(i)}$, then $P_i(a_1, \ldots, a_i)$ is the set of all $f \in P_i$ which are over $(a_1, \ldots, a_i)$ (in the sense of Definition~\ref{over_P1}). If $f \in P_i(a_1, \ldots, a_i)$, then we define $\pi(f) := (a_1, \ldots, a_i)$.
\end{enumerate}

Note that due to algebraicity and isolation of types (clauses~(1) and (2) of Definition~\ref{symm_wit}), the sets $P_i(\overline{f})$ and $P_i(\overline{a})$ in (2) and (3) above are always finite.
\end{definition}



\begin{lemma}
\label{fiber_actions}
Fix some $(a_1, \ldots, a_i, \ldots, a_{n+1}) \in I^{(n+1)}$, two distinct numbers $i, j \in \{1, \ldots, n+1\}$, and two compatible $n$-tuples $\overline{f}, \overline{f}'$ from $P_{n-1}$ such that $$\overline{f} = \langle f_k : 1 \leq k \leq n+1; k \neq i \rangle,$$ $$\overline{f}' = \langle f'_k : 1 \leq k \leq n+1 ; k \neq j \rangle,$$ $$\pi(f_k) = (a_1, \ldots, a_{n+1}) \setminus \{a_i, a_k\},$$ $$\pi(f'_k) = (a_1, \ldots, a_{n+1}) \setminus \{ a_j, a_k\},$$ and $$f_j = f'_i.$$ Then given any pair $(p,q)$ from $P_n(\overline{f}) \times P_n(\overline{f})$, we can define a permutation $\chi = \chi(i,j; p,q, \overline{f}')$ of $P_n(\overline{f}')$ by the rule: whenever $(r_1, \ldots, r_{n+1})$ is a compatible tuple from $P_n$ such that $$Q(r_1, \ldots, r_{i-1}, p, r_{i+1}, \ldots, r_j, \ldots, r_{n+1})$$ holds, then $$ Q(r_1, \ldots, r_{i-1}, q, r_{i+1}, \ldots, \chi(r_j), \ldots, r_{n+1})$$ holds as well.

\end{lemma}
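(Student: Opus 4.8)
The plan is to define $\chi$ via the existence and uniqueness of horn-fillers, and then to verify in turn that the rule is consistent (well-defined), total, and bijective. The crucial observation that gets everything started is that $p$ and $q$ lie in the same fiber $P_n(\ov f)$, so $\pi^n(p) = \pi^n(q) = \ov f$. Consequently, replacing the $i$th entry of a compatible tuple by $q$ in place of $p$ changes none of the $\pi^n$-projections, and so preserves compatibility. Thus, given a compatible tuple $(r_1, \dots, r_{n+1})$ with $r_i = p$ satisfying $Q$, the tuple obtained by putting $q$ in position $i$ is again compatible, and by the existence of horn-fillers (Definition~\ref{connected}(2)) there is an element in position $j$ completing it to a $Q$-tuple; by the uniqueness of horn-filling (axiom~(4) of Definition~\ref{quasigroupoid}) this element is unique. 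I would define $\chi(r_j)$ to be this element, and then check from compatibility that its $\pi^n$-projection is forced to equal $\ov f'$, so that indeed $\chi(r_j) \in P_n(\ov f')$.

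The heart of the matter — and the step I expect to be hardest — is to show that $\chi(r_j)$ depends only on $r_j$ and not on the remaining entries $r_k$ ($k \ne i,j$) of the completion. This reduces to the following assertion: if $(r_k)$ and $(r'_k)$ are two completions sharing the same $p$ in position $i$ and the same $s := r_j = r'_j$ in position $j$, both satisfying $Q$, then the two position-$j$ fillers produced after substituting $q$ coincide. To prove this I would produce, using $(\le n)$-uniqueness, an elementary map $\psi$ of $\acl(a_1,\dots,a_{n+1})$ that fixes pointwise every face of $p$ and of $s$ (that is, all $P_m$-elements over index sets omitting $i$ or omitting $j$) and that carries the ``interior'' faces of the first completion — those over index sets containing both $i$ and $j$ — to the corresponding interior faces of the second. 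The point that makes this possible is the isolation-of-types axiom of a symmetric witness (Definition~\ref{symm_wit}(1)): two fillers in a common fiber have the same type over the boundary of their support, so the interior faces of the two completions realize matching types level by level, the resulting system of face-maps is compatible, and Lemma~\ref{skeletal_maps} (applicable since $T$ has $(\le n)$-uniqueness) amalgamates that system into a single map $\psi$. Transporting the relation $Q$ along $\psi$ and then invoking uniqueness of horn-filling should identify the two fillers.

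The genuine obstacle buried in this last step is that $\psi$ controls only the $(n-1)$-skeleton: the top-dimensional elements of $P_n$ cannot be pinned down by an elementary map, since doing so would be exactly the $(n+1)$-uniqueness that is assumed to fail here. So the argument cannot simply transport the top cells $p$, $q$, $s$, $r_k$ along $\psi$; instead it has to play the skeletal map $\psi$ off against the rigidity coming from uniqueness of horn-filling in order to conclude that the two $q$-completions agree in position $j$. Managing this interaction — controlling the skeleton elementarily while forcing agreement one dimension higher through the $Q$-relation — is where the real work lies.

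Finally, totality and bijectivity are comparatively routine. For totality, given any $s \in P_n(\ov f')$, the elements $p$ and $s$ share the common face $f_j = f'_i$, so by connectedness — building a maximal compatible system as in Claim~\ref{face_fill} and then filling the last horn — there is at least one completion with $p$ in position $i$ and $s$ in position $j$ satisfying $Q$; hence $\chi$ is defined on all of $P_n(\ov f')$. For bijectivity, I would check that $\chi(i,j;q,p,\ov f')$ is a two-sided inverse of $\chi(i,j;p,q,\ov f')$, which is immediate from uniqueness of horn-filling: running the construction backwards with $p$ and $q$ interchanged returns the original filler, so the composite in either order is the identity on $P_n(\ov f')$.
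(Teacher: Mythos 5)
Your reduction of the lemma to a well-definedness statement (two $Q$-completions sharing the entries in positions $i$ and $j$ must produce the same filler after $p$ is replaced by $q$), and your handling of totality and bijectivity at the end, all match the paper's proof. But the central step is never actually carried out. In your second paragraph you propose to build, via Lemma~\ref{skeletal_maps}, an elementary map $\psi$ carrying \emph{every} interior face of the first completion to the corresponding face of the second, including the top-dimensional cells $r_k$ for $k \neq i,j$; in your third paragraph you correctly observe that this plan fails, since Lemma~\ref{skeletal_maps} only amalgamates a coherent system on the $(n-1)$-skeleton and leaves the images of the $P_n$-cells uncontrolled. Having named the obstacle, you stop: ``managing this interaction \ldots\ is where the real work lies'' is a statement of the problem, not a solution. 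Since well-definedness of $\chi$ is the entire content of the lemma, the proof has a genuine gap at its core. (Your diagnosis is also slightly misstated: controlling top cells by an elementary map is not per se the failed $(n+1)$-uniqueness --- what cannot be done is prescribing the map independently on all $n+1$ faces of dimension $n$; prescribing it on all but one face is exactly what \emph{can} be done.)

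The paper's resolution is an asymmetric device you could graft onto your outline. Taking $i=1$, $j=n+1$, it builds by induction on $|s|$ a coherent system of elementary maps $\varphi_s$ indexed only by those $s \subseteq [n+1]$ \emph{containing a distinguished index} $2$, amalgamating at each stage by relative $(k,n)$-uniqueness \emph{over the base} $\acl(a_2)$ (available by Fact~\ref{skeletal_uniqueness}) and then adjusting, using the isolation-of-types clause of Definition~\ref{symm_wit}, so that: $\varphi_s$ is the identity on $\overline{A}_s$ whenever $1 \notin s$ or $n+1 \notin s$, and $\varphi_{s}(r_k) = r'_k$ for $k \in \{3, \ldots, n\}$. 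Working over $\acl(a_2)$ drops the effective dimension by one, which is why $(\leq n)$-uniqueness suffices to control these top-level cells; the single $n$-face omitted by the system, $[n+1]\setminus\{2\}$, is the one carrying $r_2$. The image of $r_2$ is then not prescribed but \emph{deduced}: the union $\varphi$ fixes $p$ and $r_{n+1}$, so applying $\varphi$ to $Q(p, r_2, \ldots, r_n, r_{n+1})$ and comparing with $Q(p, r'_2, \ldots, r'_n, r_{n+1})$ forces $\varphi(r_2) = r'_2$ by uniqueness of horn-filling; applying $\varphi$ once more to $Q(q, r_2, \ldots, r_n, s_{n+1})$ (it fixes $q$ and $s_{n+1}$ as well) and comparing with $Q(q, r'_2, \ldots, r'_n, s'_{n+1})$ yields $s_{n+1} = s'_{n+1}$. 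In short: prescribe the map on all faces but one, and let the rigidity of $Q$ determine the last face --- this is precisely the interaction you gestured at but did not supply.
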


\begin{proof}
Most of the work will go into showing that this rule gives a well-defined function $\chi$. For ease of notation, let $i = 1$ and $j = n+1$ (the other cases have identical proofs).

What we must show is: given any elements $r_2, \ldots, r_{n+1}$, $r'_2, \ldots, r'_n,$ $s_{n+1}$, and $s'_{n+1}$ such that

$$Q(p, r_2, \ldots, r_n, r_{n+1}) \wedge Q(q, r_2, \ldots, r_n, s_{n+1}),$$

$$Q(p, r'_2, \ldots, r'_n, r_{n+1}) \wedge Q(q, r'_2, \ldots, r'_n, s'_{n+1}),$$

and the four $(n+1)$-tuples in the $Q$ relations above are all compatible and independent, then $s'_{n+1} = s_{n+1}$.

\begin{claim}
There is a family of elementary maps $$\langle \varphi_s : \{2\} \subseteq s \subseteq \{1, \ldots, n+1 \} \rangle$$ satisfying all of the following properties:

\begin{enumerate}
\item $\varphi_s \in \Aut(\overline{A}_{s})$;
\item If $s \subseteq t$ then $\varphi_s \subseteq \varphi_t$;
\item If $1 \notin s$ or if $n+1 \notin s$ then $\varphi_s$ is the identity map on $\overline{A}_s$;
\item If $\{1, 2, n+1\} \subseteq s$ and $i \in \{1, \ldots, n+1\} \setminus s$, then $$\varphi_s(\pi_{s }(r_i)) = \pi_{s }(r'_i),$$ where the notation ``$\pi_s(r_i)$'' stands for the natural projection of $r_i$ onto an element of $P_{|s|}( \langle a_i : i \in s \rangle )$ formed by composing various maps $\pi^k_\ell$; and
\item If $i \in \{3, \ldots, n\}$ and $s_i = \{1, \ldots, n+1\} \setminus \{ i\}$, then $\varphi_{s_i}(r_i) = r'_i$.
\end{enumerate}

\end{claim}

For the proof of the Claim, we construct the maps $\varphi_s$ by induction on $|s|$ using amalgamation properties. For the base case, where $s = \{2\}$, we simply let $\varphi_{\{2\}}$ be the identity map on $\acl(a_2)$. For the induction step, suppose that $|s| = k > 1$ and we are given the maps $\varphi_t$ for all $t \subset_{k-1} \{1, \ldots, n+1\}$ such that $2 \in t$. Let $s = \{2\} \cup \{i_1, \ldots, i_{k-1}\}$ and for every $j \in [k-1]$, let $s_j = s \setminus \{i_j\}$. By the induction hypothesis, the functions $\{ \varphi_{s_j} : 1 \leq j \leq k-1\}$ form a compatible system of elementary maps, so by relative $(k-1, n)$-uniqueness over $\acl(a_2)$, the union of the $\varphi_{s_j}$ maps can be extended to an elementary map $\varphi^0_s$ of $\ov{A}_{s}$. 

If $1 \notin s$, then clearly $1 \notin s_j$ for every $j$, so by the induction hypothesis the $\varphi_{s_j}$ are all identity maps, and in this case we may choose $\varphi^0_s = \varphi_s$ to be an identity map as well. The same applies if $n+1 \notin s$, so we have verified condition~(3).

Finally, if $\{1, 2, n+1\} \subseteq s$, then we only need to construct $\varphi_s$ satisfying conditions (4) and (5) for the particular $s$ under consideration. But since $\varphi^0_s$ was constructed as an extension of the maps $\varphi_{s_j}$, we may always construct $\varphi_s$ as $\varphi^1_s \circ \varphi^0_s$ for some $\varphi^1_s \in \Aut(\ov{A}_s)$ which fixes the images of the maps $\left\{ \varphi_{s_j} : j \in [k-1] \right\}$ pointwise.



Now that we have established the Claim, let $\varphi := \varphi_{[n+1]}$ be the union of all of the maps $\varphi_s$. By construction, $\varphi(p) = p$ (since $\varphi$ is the identity on $\ov{A}_{[n+1] \setminus \{1\}}$), $\varphi(r_i) = r'_i$ for every $i \in \{3, \ldots, n\}$, and $\varphi(r_{n+1}) = r_{n+1}$. So because $\varphi$ must preserve the $Q$ relation on $(p, r_2, \ldots, r_n, r_{n+1})$, it follows that $\varphi(r_2) = r'_2$. Therefore since $Q(q, r_2, \ldots, r_n, s_{n+1})$ holds, we also have (by applying $\varphi$) $$Q(q, r'_2, \ldots, r'_n, s_{n+1}),$$ and comparing this with the $Q$ relation on $(q, r'_2, \ldots, r'_n, s'_{n+1})$, we conclude that $s'_{n+1} = s_{n+1}$ as desired.

\end{proof}

\begin{lemma}
\label{f_prime_independence}
Suppose that $i$ and $j$ are distinct elements of $\{1, \ldots, n+1\}$ and we have three compatible $n$-tuples from $P_{n-1}$, $$\overline{f} = \langle f_k : 1 \leq k \leq n+1; k \neq i \rangle,$$ $$\overline{f}' = \langle f'_k : 1 \leq k \leq n+1; k \neq j \rangle,$$ and $$\overline{f}'' = \langle f_k : 1 \leq k \leq n+1; k \neq j \rangle$$ such that $f_j = f'_i = f''_i$. Then for any two pairs $(p,q)$, $(p',q')$ from $P(\overline{f}) \times P(\overline{f})$, $$\chi(i,j;p,q,\overline{f}') = \chi(i,j;p',q',\overline{f}')$$ if and only if $$\chi(i,j;p,q,\overline{f}'') = \chi(i,j;p',q',\overline{f}'').$$
\end{lemma}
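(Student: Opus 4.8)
The plan is to produce a single bijection $\theta \colon P_n(\overline{f}') \to P_n(\overline{f}'')$ that simultaneously conjugates $\chi(i,j;p,q,\overline{f}')$ to $\chi(i,j;p,q,\overline{f}'')$ for \emph{every} pair $(p,q) \in P_n(\overline{f}) \times P_n(\overline{f})$. Granting such a $\theta$, the lemma is immediate: two permutations of $P_n(\overline{f}')$ are equal if and only if they remain equal after conjugating by $\theta$, so $\chi(i,j;p,q,\overline{f}') = \chi(i,j;p',q',\overline{f}')$ holds if and only if $\chi(i,j;p,q,\overline{f}'') = \chi(i,j;p',q',\overline{f}'')$. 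As in Lemma~\ref{fiber_actions} I take $i = 1$ and $j = n+1$, so that $\overline{f}$ lies over the facet $\{a_2, \ldots, a_{n+1}\}$ while $\overline{f}'$ and $\overline{f}''$ lie over $\{a_1, \ldots, a_n\}$ and share their first face $f_j = f'_1 = f''_1$, which lies over $\{a_2, \ldots, a_n\}$.

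I would take $\theta$ to be (the restriction to $P_n(\overline{f}')$ of) an elementary map, extended to an automorphism fixing $I$, which is the identity on $\acl(a_2, \ldots, a_{n+1})$ and carries $\overline{f}'$ to $\overline{f}''$. Such a map does everything required: it fixes every $p, q, p', q' \in P_n(\overline{f})$ and the shared face $f_j$, since these all lie in $\acl(a_2, \ldots, a_{n+1})$; being elementary and fixing the Morley sequence $I$ it preserves the relatively $I$-definable data $\pi^k$ and $Q$; and since $\theta(\overline{f}') = \overline{f}''$ it restricts to a bijection $P_n(\overline{f}') \to P_n(\overline{f}'')$. The intertwining is then forced by uniqueness of horn-filling (axiom~(4) of Definition~\ref{quasigroupoid}): given $r_j \in P_n(\overline{f}')$, choose by connectedness a compatible tuple with $p$ in slot $i$, $r_j$ in slot $j$, and $Q$ holding; applying $\theta$ (which fixes $p$ and $q$, preserves $Q$, and moves $r_j$ into $P_n(\overline{f}'')$) to both the relation defining $\chi(i,j;p,q,\overline{f}')(r_j)$ and the $Q$-relation defining $\chi(i,j;p,q,\overline{f}'')(\theta r_j)$ shows that $\theta(\chi(i,j;p,q,\overline{f}')(r_j))$ and $\chi(i,j;p,q,\overline{f}'')(\theta r_j)$ satisfy the same $Q$-horn, hence coincide. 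Thus $\theta$ conjugates the two permutations, and it does so with the \emph{same} $\theta$ for all $(p,q)$.

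The substance of the argument is the construction of $\theta$, i.e.\ showing that $\overline{f}'$ and $\overline{f}''$ are conjugate over $\acl(a_2, \ldots, a_n)$ by a structure-respecting map fixing $a_1$. I would build this by amalgamating a compatible system of elementary maps on the faces of the facet $\{a_1, \ldots, a_n\}$, by induction on the size of the support. Every face of $\overline{f}'$ over a support omitting $a_1$ agrees with the corresponding face of $\overline{f}''$ (both are determined by the shared face $f_j$), so there I take the identity; over a support of the form $\{a_1\} \cup \{a_m : m \in s\}$ I first use Lemma~\ref{skeletal_maps} (available because $T$ has $(\leq n)$-uniqueness, whence the relative uniqueness of Fact~\ref{skeletal_uniqueness}) to amalgamate the maps already built on proper sub-supports into a map carrying the subfaces of the $\overline{f}'$-face onto those of the $\overline{f}''$-face, and then invoke the isolation-of-types clause of the symmetric witness (Definition~\ref{symm_wit}(1)) to correct this map on the boundary $\bd$ of the support so that it sends the $\overline{f}'$-face itself to the $\overline{f}''$-face while still fixing all proper subfaces. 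A final application of Lemma~\ref{skeletal_maps} at the top level yields an elementary $\theta_0$ of $\acl(a_1, \ldots, a_n)$ fixing $\acl(a_2, \ldots, a_n)$ with $\theta_0(\overline{f}') = \overline{f}''$; since the two facets $\acl(a_1, \ldots, a_n)$ and $\acl(a_2, \ldots, a_{n+1})$ are independent over their common part $\acl(a_2, \ldots, a_n)$ (the $a_i$ form a Morley sequence), the union of $\theta_0$ with the identity on $\acl(a_2, \ldots, a_{n+1})$ is again elementary and extends to the desired $\theta$.

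The main obstacle is precisely the inductive matching in the previous paragraph. To apply isolation of types at a given level I must already know that the subfaces of the $\overline{f}'$-face have been carried onto those of the $\overline{f}''$-face and, crucially, that two faces in $P_i$ (for $i \leq n-1$) lying over \emph{identical} subfaces realize the same type over $\bd$ of their support, so that the correcting map exists. Establishing this conjugacy of lower-dimensional fibers is where the hypothesis of full $(\leq n)$-uniqueness on the sorts $P_2, \ldots, P_{n-1}$ is essential; for $n = 2$ the statement is vacuous, as then $\overline{f}' = \overline{f}''$ is forced. Some care will also be needed to ensure the maps in the system genuinely restrict to one another so that Lemma~\ref{skeletal_maps} applies, paralleling the bookkeeping for the maps $\varphi_s$ in the proof of Lemma~\ref{fiber_actions}.
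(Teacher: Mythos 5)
Your overall strategy --- conjugating by a single elementary map $\theta$ that fixes $\acl(a_2, \ldots, a_{n+1})$ (hence $p, q, p', q'$ and the shared face) pointwise and carries $\overline{f}'$ to $\overline{f}''$, built by a series of amalgamations as in Lemma~\ref{fiber_actions}, with the intertwining forced by uniqueness of horn-filling --- is exactly the paper's argument, including the appeal to ``the same argument as in Lemma~\ref{fiber_actions}'' for the construction of the conjugating map.

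However, there is a genuine gap at the very first step: you assert that, having normalized $i = 1$ and $j = n+1$, both $\overline{f}'$ and $\overline{f}''$ lie over $\{a_1, \ldots, a_n\}$. The hypotheses do not give this. They only force the shared face $f_j = f'_i = f''_i$, whose support is $\{a_2, \ldots, a_n\}$; the remaining vertex of $\supp(\overline{f}'')$ can be \emph{any} element $b \in I$ distinct from $a_2, \ldots, a_{n+1}$, and $b \neq a_1$ is allowed. The paper's proof begins with precisely this reduction: by $2$-uniqueness (stationarity over algebraically closed sets), the new realization $b$ of $\tp(a_1)$ occurring in $\pi^n(\overline{f}'')$ can be moved to $a_1$ by an elementary map fixing $\acl(a_2, \ldots, a_{n+1})$ pointwise, which conjugates $\chi(i,j;\cdot,\cdot,\overline{f}'')$ without disturbing $p, q, p', q'$; only after that does the amalgamation argument run. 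The omission is not cosmetic, because the different-support case is the one the lemma exists to handle: in Lemma~\ref{pair_types} the point is that the relation is independent of ``the particular choice of $\overline{f}'$,'' a choice that includes the auxiliary vertex, and this is what makes the relation definable over $\acl(\pi(r_1))$ by definability of types. Indeed, your own parenthetical that for $n = 2$ the statement is vacuous because ``$\overline{f}' = \overline{f}''$ is forced'' is a symptom of the misreading: for $n = 2$ the only sort below $P_n$ is $I$ itself, so under your same-support assumption there is literally nothing to prove, whereas the actual content of the lemma in that (groupoid) case is exactly that the germ $\chi$ does not depend on which extra vertex $b$ one routes through. The repair is one sentence using a tool you already invoke elsewhere (stationarity), but as written your proof does not cover the case for which the lemma is needed.
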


\begin{proof}
To fix notation, say that (as in Lemma~\ref{fiber_actions}) $$\pi(f_k) = (a_1, \ldots, a_{n+1}) \setminus \{a_i, a_k\}$$ and $$\pi(f'_k) = (a_1, \ldots, a_{n+1}) \setminus \{ a_j, a_k\}$$ for some $(a_1, \ldots, a_{n+1}) \in I^{(n+1)}$. The first reduction is to note that without loss of generality, $$\pi(f''_k) = (a_1, \ldots, a_{n+1}) \setminus \{ a_j, a_k\},$$ since by $2$-uniqueness whichever new realization of $\tp(a_1)$ occurs in $\pi^n(\overline{f}'')$ can be mapped to $a_i$ via an elementary map which fixes $\acl(\{a_1, \ldots, a_{n+1}\} \setminus \{a_i\})$ pointwise.

The remainder of the proof uses the same argument as in Lemma~\ref{fiber_actions}: if, say, $i=1$ and $j = n+1$, we can perform a similar series of amalgamations to get a compatible family of maps $\langle \varphi_s : \{2\} \subseteq s \subseteq \{1, \ldots, n+1\} \rangle$ which fix the ``face'' $\acl(a_2, \ldots, a_{n+1})$ pointwise and map $\overline{f}'$ to $\overline{f}''$.
\end{proof}

\begin{lemma}
\label{pair_types}
Suppose that $\overline{f} = (f_1, \ldots, f_n)$ is a compatible $n$-tuple from $P_{n-1}$ and $\{r_1, \ldots, r_N\}$ is a list of all the elements of $P_n(\overline{f})$. Then $\tp(r_1 r_i / \overline{f}) = \tp(r_1 r_j / \overline{f})$ if and only if $i = j$.

Thus $$\{\tp(r_i r_j / \bd (\overline{f})) : 1 \leq i, j \leq N\}$$ contains exactly $N$ types, represented by the pairs $(r_1, r_1)$, $(r_1, r_2), \ldots,$ $(r_1, r_N)$.
\end{lemma}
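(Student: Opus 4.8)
The plan is to prove the first assertion — that $i \mapsto \tp(r_1 r_i/\ov{f})$ is injective — and then deduce the counting statement. The ``if'' direction is trivial, so everything reduces to showing that $\tp(r_1 r_i/\ov{f}) = \tp(r_1 r_j/\ov{f})$ forces $i=j$. First I would pass from types over $\ov{f}$ to types over $\bd(\ov{f})$: since the isolation-of-pairs clause (Definition~\ref{symm_wit}(1)(b)) says $\tp(f,g/\bd(\pi(f)))$ is isolated over $\pi^n(f)=\ov{f}$ for $f,g$ in a common fibre, equality of the $\ov{f}$-types of $(r_1,r_i)$ and $(r_1,r_j)$ upgrades to equality of their $\bd(\ov{f})$-types (the isolating formula lives over $\ov{f}$). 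Hence there is $\sigma \in \Aut(\C/\bd(\ov{f}))$ with $\sigma(r_1)=r_1$ and $\sigma(r_i)=r_j$, and it suffices to show that any such $\sigma$ fixes every element of the fibre $P_n(\ov{f})$.

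The main step is to upgrade $\sigma$ to an automorphism fixing a whole auxiliary simplex, so that the horn-uniqueness axiom (Definition~\ref{quasigroupoid}(4)) can be applied. Write $\ov{a}=(a_1,\dots,a_n)$ for the support of $\ov{f}$, adjoin a fresh vertex $a_{n+1}$ independent from $\ov{a}$, and set $F_\ell = \{a_1,\dots,a_{n+1}\}\setminus\{a_\ell\}$, so that $F_{n+1}=\ov{a}$. I would amalgamate $\sigma\restriction\acl(\ov{a})$ on the face $F_{n+1}$ with the identity maps on the faces $F_k$, $k\in\{2,\dots,n\}$, \emph{working over the base vertex $\acl(a_1)$}; these maps agree on overlaps because every such overlap lies inside $\bd(\ov{a})$, which $\sigma$ fixes. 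The crucial point is that over the base $a_1$ only the $n$ vertices $a_2,\dots,a_{n+1}$ remain, so the amalgamation uses only relative $(n,n)$-uniqueness over $\acl(a_1)$ — available from $(\le n)$-uniqueness by Fact~\ref{skeletal_uniqueness} — and never the unavailable $(n+1)$-uniqueness. This produces $\tilde\sigma\in\Aut(\C)$ fixing every vertex $a_1,\dots,a_{n+1}$, fixing $\acl(F_k)$ pointwise for $k\in\{2,\dots,n\}$, restricting to $\sigma$ on $\acl(\ov{a})$, and (after arranging $\tilde\sigma$ to fix $I$ setwise) preserving the relatively $I$-definable relation $Q$.

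To finish the first assertion I would feed $Q$-configurations into $\tilde\sigma$. Using connectedness of $\mathcal{H}$ (cf. Claim~\ref{horn_fill}) choose a compatible tuple with $Q(p_1,\dots,p_n,r_1)$ and $p_\ell$ over $F_\ell$. Since $\tilde\sigma$ fixes $\acl(F_2),\dots,\acl(F_n)$ pointwise it fixes $p_2,\dots,p_n$, and since $\tilde\sigma(r_1)=r_1$, applying $\tilde\sigma$ to this relation and using horn-uniqueness in the first coordinate gives $\tilde\sigma(p_1)=p_1$. Now let $p_n'\in P_n(F_n)$ be the unique horn-filler with $Q(p_1,\dots,p_{n-1},p_n',r_i)$; as $p_n'\in\acl(F_n)$ it too is fixed by $\tilde\sigma$, so applying $\tilde\sigma$ yields $Q(p_1,\dots,p_{n-1},p_n',r_j)$, and horn-uniqueness in the last coordinate forces $r_i=r_j$. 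The step I expect to be the main obstacle is exactly this dimension bookkeeping: the face $F_1$ opposite the base vertex cannot be controlled in the amalgamation (that would require $(n+1)$-uniqueness), which is precisely why the intermediate horn-uniqueness argument pinning down $\tilde\sigma(p_1)=p_1$ is indispensable.

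For the counting statement I would first record transitivity: the isolation-of-types clause (Definition~\ref{symm_wit}(1)(a)) guarantees that $P_n(\ov{f})$ is a single complete type over $\bd(\ov{f})$, so $\Aut(\C/\bd(\ov{f}))$ acts transitively on it. The first assertion, restricted along $\bd(\ov{f})\supseteq\ov{f}$, shows that the $N$ types $\tp(r_1 r_c/\bd(\ov{f}))$ for $c=1,\dots,N$ are pairwise distinct. Conversely, given any pair $(r_a,r_b)$, transitivity provides $\tau\in\Aut(\C/\bd(\ov{f}))$ with $\tau(r_a)=r_1$, so that $\tp(r_a r_b/\bd(\ov{f})) = \tp(r_1\,\tau(r_b)/\bd(\ov{f}))$ is one of these $N$ types. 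Hence the set of all pair-types has exactly $N$ elements, represented by $(r_1,r_1),\dots,(r_1,r_N)$.
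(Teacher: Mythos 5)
Your proof is correct, and it takes a genuinely different route from the paper's. The paper derives the injectivity assertion from machinery it has already built: by Lemma~\ref{fiber_actions} the permutation $\chi(i,j;r_1,r_k,\overline{f}')$ is injective in $r_k$ (uniqueness of horn-filling), Lemma~\ref{f_prime_independence} makes the resulting equivalence on pairs independent of the auxiliary fiber $\overline{f}'$, and definability of types then makes that equivalence definable over $\acl(\pi(r_1))$, so that isolation over $\overline{f}$ separates the $N$ pair types. You instead argue directly with automorphisms: the isolation clause (1)(b) of Definition~\ref{symm_wit} converts equality of the two pair types into some $\sigma\in\Aut(\C/\bd(\pi(r_1)))$ fixing $r_1$ and sending $r_i$ to $r_j$; relative $(n,n)$-uniqueness over the single vertex $\acl(a_1)$ (Fact~\ref{skeletal_uniqueness}) extends $\sigma$ to $\tilde\sigma$ fixing all the faces through $a_1$; and horn-uniqueness is applied twice, first to pin down $\tilde\sigma(p_1)=p_1$ on the one face the amalgamation cannot reach (your diagnosis of this as the crux is exactly right---controlling that face directly is what $(n+1)$-uniqueness would do), then to force $r_i=r_j$. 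This is the same technique by which the paper proves Lemma~\ref{fiber_actions} and Lemma~\ref{alt}, so in effect you inline that technique rather than quoting its packaged consequences; the paper's route buys reusability of the $\chi$-machinery (needed again to build the definable group $G$), while yours is more self-contained and avoids the detour through definability of types. Your unproved side remark that $\tilde\sigma$ can be arranged to fix $I$ (so as to preserve the relatively $I$-definable relation $Q$) is a routine stationarity argument---extend the amalgamated map by the identity on $I$ minus the vertices---and the paper is equally terse about this point in its own lemmas.

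One caveat on the counting half: clause (1)(a) gives isolation of each $\tp(r_c/\bd(\pi(r_1)))$ over $\overline{f}$, but isolation alone does not make $P_n(\overline{f})$ ``a single complete type'' over the boundary; you also need that all elements of the fiber realize the same type over $\overline{f}$, and only then does isolation upgrade this to conjugacy over $\bd(\pi(r_1))$. The paper relies on exactly the same unproved fact (its final sentence cites $\tp(r_i/\overline{f})=\tp(r_k/\overline{f})$ without argument; it does hold for the witness built in Proposition~\ref{symm_wit_existence}, whose fibers are solution sets of formulas isolating complete types), so this is a shared soft spot rather than an error peculiar to your argument---but transitivity should be attributed to that fact together with isolation, not to isolation alone.
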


\begin{proof}
If we fix any other compatible tuple $\overline{f}'$ as in the statement of Lemma~\ref{fiber_actions}, then it is immediate from the definition that for any two distinct $i, j \in \{1, \ldots, n+1\}$, if $r_k \neq r_\ell$ then $$\chi(i,j; r_1, r_k, \overline{f}') \neq \chi(i,j;r_1, r_\ell, \overline{f}').$$ The relation above is independent of the particular choice of $\overline{f}'$ by Lemma~\ref{f_prime_independence}, and so this relation is clearly definable over $\acl(\pi(r_1))$ by definability of types. Since $\tp(r_1 r_i / \acl(\pi(r_1)))$ is isolated over $\overline{f}$, this proves that there are at least $N$ distinct such types.

The fact that there are only $N$ possible types of pairs $(r_i, r_j)$ follows from the fact that for any $i, k \in \{1, \ldots, N\}$, $\tp(r_i / \overline{f}) = \tp(r_k / \overline{f})$.
\end{proof}

\begin{definition}
\label{G_notation}
Fix some $f \in P$ and let $g_i = \pi^n_i(f)$ for each $i \in \{1, \ldots, n\}$. Suppose that $N$ is the number of realizations of the type $q := \tp(f / g_1, \ldots, g_n)$, and let $f = f_1, f_2, \ldots, f_N$ be an enumeration of all the realizations of $q$. 

By Lemma~\ref{pair_types}, there are formulas $\varphi(\overline{z}) = \varphi(z_1, \ldots, z_n)$, $\psi(y,\overline{z})$, and $\theta_1(y_1, y_2, \overline{z}), \ldots, \theta_N(y_1, y_2, z)$ such that:
\begin{enumerate}
\item $\varphi(\overline{z}) \in \tp(g_1, \ldots g_n)$;
\item $\psi(y, \overline{z}) \in \tp(f, g_1, \ldots, g_n)$ and $\psi(y, \overline{z}) \vdash \varphi(\overline{z})$;
\item $\theta_i(y_1, y_2, \overline{z}) \in \tp(f_1, f_i, g_1, \ldots, g_n)$ and $\theta_i(y_1, y_2, \overline{z}) \vdash \psi(y_1, \overline{z}) \wedge \psi(y_2, \overline{z})$;

\item Whenever $\varphi(\overline{c})$ holds, then $\psi(y, \overline{c})$ has precisely $N$ realizations;
\item Whenever $\psi(f, \overline{c}) \wedge \psi(f', \overline{c})$ holds, there is exactly one $i \in \{1, \ldots, N\}$ such that $\theta_i(f, f', \overline{c})$ holds; and
\item Whenever $\varphi(\overline{c})$ and $\psi(f, \overline{c})$ hold, then $\overline{c} = \pi^n(f) = (\pi^n_1(f), \ldots, \pi^n_n(f))$.
\end{enumerate}
\end{definition}

\begin{definition}
\label{group_comp}
Suppose that $f \models p_n$ and $(g_1, \ldots, g_n) = \pi^n(f) $ (so $\models \varphi(\overline{g})$).
Then for any two $i, j \in \{1, \ldots, N\}$, there is a unique element $i \star j \in \{1, \ldots, N\}$ such that whenever $\theta_i(f_2, f_3) \wedge \theta_j(f_1, f_2)$ holds, then $\theta_{i \star j}(f_1, f_3)$ also holds. This forms a group operation on the set $\{1, \ldots, N\}$, and we call the resultant group $G$.

Furthermore, from now on we will assume that the formulas in Definition~\ref{G_notation} satisfy the following additional property:

\begin{enumerate}
\setcounter{enumi}{6}
\item Whenever $\theta_i(f_2, f_3, \overline{c}) \wedge \theta_j(f_1, f_2, \overline{c})$ holds, then $\theta_{i \star j}(f_1, f_3, \overline{c})$ also holds.
\end{enumerate}
\end{definition}

\begin{definition}
\label{G_f}
Suppose that $\varphi(\overline{c})$ holds.
\begin{enumerate}
\item The equivalence relation $\sim_{\overline{c}}$ on $\psi(y, \overline{c}) \times \psi(y, \overline{c})$ is defined by: $(r,s) \sim_{\overline{c}} (r',s')$ if and only if there is some $i \in \{1, \ldots, N\}$ such that $\theta_i(r,s, \overline{c}) \wedge \theta_i(r', s', \overline{c})$. We use brackets $[ (r,s) ]$ to denote the $\overline{c}$-class of $(r,s)$. (Note that it is never ambiguous what $\overline{c}$ is since $\overline{c} \in \dcl(r)$.)
\item $G_{\overline{c}}$, as a set, is the quotient of $\psi(y, \overline{c}) \times \psi(y, \overline{c})$ by the equivalence relation $\sim_{\overline{c}}$. We define a group operation $\cdot$ on $G_{\overline{c}}$ by the rule that $$[(s,t)] \cdot [(r,s)] = [(r,t)].$$ (By the properties of the formulas $\theta_i$, this gives a well-defined binary operation.)
 \end{enumerate}
\end{definition}

\begin{lemma}
For any two $n$-tuples $\overline{c}$ and $\overline{c}'$ such that $\varphi(\overline{c}) \wedge \varphi(\overline{c}')$ holds, there is a canonical $(\overline{c},\overline{c}')$-definable isomorphism $\Phi_{\overline{c},\overline{c}'} : G_{\overline{c}} \rightarrow G_{\overline{c}'}$. These automorphisms commute: $\Phi_{\overline{c}', \overline{c}''} \circ \Phi_{\overline{c}, \overline{c}'} = \Phi_{\overline{c}, \overline{c}''}$.
\end{lemma}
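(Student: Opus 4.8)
The plan is to realize every $G_{\overline{c}}$ as a canonical copy of the single abstract group $G$ of Definition~\ref{group_comp} and then obtain $\Phi_{\overline{c},\overline{c}'}$ by composing these identifications. For each $\overline{c}$ with $\varphi(\overline{c})$, I would define a map $\iota_{\overline{c}}\colon G_{\overline{c}}\to G$ sending a class $[(r,s)]$ to the unique $i\in\{1,\dots,N\}$ with $\theta_i(r,s,\overline{c})$; clause~(5) of Definition~\ref{G_notation} guarantees that this $i$ exists and is unique, and since $(r,s)\sim_{\overline{c}}(r',s')$ holds exactly when the two pairs share an index, the $\sim_{\overline{c}}$-classes are precisely the fibres of the index, so $\iota_{\overline{c}}$ is a well-defined injection. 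It is onto: fixing one realization $r_1$ of $\psi(y,\overline{c})$ and enumerating the others as $r_1,\dots,r_N$, Lemma~\ref{pair_types} tells us that $(r_1,r_1),\dots,(r_1,r_N)$ already realize all $N$ distinct pair-types, hence all $N$ indices occur. Finally, $\iota_{\overline{c}}$ is a homomorphism by clause~(7) of Definition~\ref{group_comp}: if $\theta_i(s,t,\overline{c})$ and $\theta_j(r,s,\overline{c})$ then $[(s,t)]\cdot[(r,s)]=[(r,t)]$ while $\theta_{i\star j}(r,t,\overline{c})$ holds, so $\iota_{\overline{c}}$ carries the operation of Definition~\ref{G_f} to $\star$ and is therefore a group isomorphism.

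I would then set $\Phi_{\overline{c},\overline{c}'}:=\iota_{\overline{c}'}^{-1}\circ\iota_{\overline{c}}$, a group isomorphism $G_{\overline{c}}\to G_{\overline{c}'}$ as a composite of isomorphisms. Concretely this is the map sending a class of index $i$ to the class of the same index, so its graph is
$$\left\{\big([(r,s)]_{\overline{c}},[(r',s')]_{\overline{c}'}\big):\exists i\,\big(\theta_i(r,s,\overline{c})\wedge\theta_i(r',s',\overline{c}')\big)\right\},$$
a first-order condition in the parameters $\overline{c},\overline{c}'$ (the formulas $\theta_i$ being fixed); this is precisely the assertion that $\Phi_{\overline{c},\overline{c}'}$ is $(\overline{c},\overline{c}')$-definable. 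The commuting relation is then immediate:
$$\Phi_{\overline{c}',\overline{c}''}\circ\Phi_{\overline{c},\overline{c}'}=\iota_{\overline{c}''}^{-1}\circ\iota_{\overline{c}'}\circ\iota_{\overline{c}'}^{-1}\circ\iota_{\overline{c}}=\iota_{\overline{c}''}^{-1}\circ\iota_{\overline{c}}=\Phi_{\overline{c},\overline{c}''}.$$

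The one point deserving real care --- and the step I expect to be the crux --- is verifying that the index set $\{1,\dots,N\}$ and the operation $\star$ genuinely do not depend on $\overline{c}$, so that all the maps $\iota_{\overline{c}}$ land in one and the same group $G$. This is exactly what the uniformity built into the hypotheses supplies: clauses~(4)--(6) of Definition~\ref{G_notation} and clause~(7) of Definition~\ref{group_comp} are all stated for \emph{every} $\overline{c}$ satisfying $\varphi(\overline{c})$, and the fact that $\varphi(\overline{z})\in\tp(g_1,\dots,g_n)$ ensures that each $\psi(y,\overline{c})$ carries the same fibre structure with the same count $N$ of realizations. Once this uniformity is pinned down, the remaining verifications are purely formal manipulations of the index map and require no further amalgamation-theoretic input.
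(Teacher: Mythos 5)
Your proposal is correct and takes essentially the same route as the paper: the paper's entire proof is the one-line definition $\Phi_{\overline{c},\overline{c}'}([(r,s)]) = [(r',s')]$ iff $\models \theta_i(r,s,\overline{c}) \wedge \theta_i(r',s',\overline{c}')$ for some $i$, which is exactly your ``same index'' relation. Your factorization $\Phi_{\overline{c},\overline{c}'} = \iota_{\overline{c}'}^{-1} \circ \iota_{\overline{c}}$ through the abstract group $G$ simply organizes, in a clean way, the well-definedness, bijectivity, homomorphism, and cocycle verifications that the paper leaves implicit (your surjectivity step, resting on the uniformity of clauses (4)--(7) over all $\overline{c}$ satisfying $\varphi$, is precisely the point the paper also relies on tacitly).
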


\begin{proof}
Let $\Phi_{\overline{c},\overline{c}'}([(r,s)]) = [(r', s')]$ if and only if there is some $i \in \{1, \ldots, N\}$ such that $$\models \theta_i(r, s, \overline{c}) \wedge \theta_i(r', s', \overline{c}').$$

\end{proof}

At this point it is clear that the isomorphism type of all the groups $G_{\overline{c}}$ in the previous Lemma is the same as the $G$ in Definition~\ref{group_comp} above. Using the definability of $\sim_{\overline{c}}$, we will think of $G$ as living in $\mathfrak{C}^{eq} = \mathfrak{C}$ and identify it with the concrete groups $G_{\overline{c}}$. Although it is not immediate from the definition, we will see below that the group $G$ is always abelian.

\begin{definition}
\label{std_action_G}
Given a tuple $\overline{c}$ such that $\varphi(\overline{c})$ holds, the \emph{standard action of $G$ on $\psi(\mathfrak{C}, \overline{c})$} is given by the rule: if $\alpha = [(r,s)]$ where $$\models \psi(r, \overline{c}) \wedge \psi(s,\overline{c}) \wedge \theta_i(r, s, \overline{c}),$$ then for any $f \in \psi(\mathfrak{C}, \overline{c})$, $\alpha(f)$ is the unique element such that $\theta_i(f, \alpha(f), \overline{c})$ holds.

\end{definition}

\begin{lemma}
\label{alt}
Suppose that $Q(r_1, \ldots, r_{n+1})$ holds of some compatible $(n+1)$-tuple $(r_1, \ldots, r_{n+1})$ from $P_n$. Then for any $\alpha \in G$ and any $i \in \{1, \ldots, n\}$, $$\models Q(r_1, \ldots, r_{i-1}, \alpha(r_i), \alpha(r_{i+1}), r_{i+2}, \ldots, r_{n+1}).$$
\end{lemma}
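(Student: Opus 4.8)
The plan is to prove Lemma~\ref{alt} by reducing the ``action by $\alpha$ on two adjacent coordinates'' to the concrete combinatorics of the group operation encoded in the formulas $\theta_i$, and then transporting this along an automorphism that realizes the $(n+2)$-homogeneity built into the symmetric witness. The key point is that the standard action of $G$ on a fiber $\psi(\mathfrak{C},\overline{c})$ (Definition~\ref{std_action_G}) is, by construction, exactly the action that permutes realizations of $\tp(f/\bd(\overline{f}))$ according to the relation $\theta_i$, and so the whole statement is really about how $Q$ interacts with the equivalence classes $[(r,s)]$.

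First I would set up coordinates as in Lemma~\ref{fiber_actions}: fix a compatible $(n+1)$-tuple $(r_1,\dots,r_{n+1})$ from $P_n$ with $Q(r_1,\dots,r_{n+1})$, fix $i\in\{1,\dots,n\}$, and let $\alpha\in G$. The goal is to replace $r_i,r_{i+1}$ by $\alpha(r_i),\alpha(r_{i+1})$. I would express $\alpha$ concretely as a class $[(r_i,\alpha(r_i))]$ inside the fiber $P_n(\pi^n(r_i))$, so that $\theta_k(r_i,\alpha(r_i),\pi^n(r_i))$ holds for the appropriate $k$; by the transport isomorphism $\Phi_{\overline{c},\overline{c}'}$ of the preceding Lemma, the \emph{same} index $k$ governs the action on the adjacent fiber $P_n(\pi^n(r_{i+1}))$, so that $\alpha(r_{i+1})$ is the unique element with $\theta_k(r_{i+1},\alpha(r_{i+1}),\pi^n(r_{i+1}))$. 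This is the heart of why acting by a single $g$ on two adjacent coordinates is the right operation: the two fibers share a common $(n-1)$-dimensional face, and the action is constant along that shared face. The main obstacle will be verifying that the shared-face structure forces the \emph{same} $\theta_k$ on both fibers, i.e.\ that $\Phi$ is genuinely the identity on indices here; this is exactly what the permutation $\chi(i,i+1;p,q,\overline{f}')$ of Lemma~\ref{fiber_actions} computes, and I expect to invoke that lemma directly rather than re-derive the amalgamation.

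With this concrete description in hand, the verification of $Q(r_1,\dots,\alpha(r_i),\alpha(r_{i+1}),\dots,r_{n+1})$ becomes a question of definability plus homogeneity. I would argue as follows: the statement to be proved is a formula in the $r_j$'s and in $\alpha$, all of whose parameters lie in $\cl$ of a single $I^{(n+1)}$-tuple $(a_1,\dots,a_{n+1})$. Because $I$ is an indiscernible (Morley) set and the quasigroupoid is relatively $I$-definable, by $(\le n)$-uniqueness over $\acl(a_j)$ it suffices to check the claim for one canonical configuration, which I would take to be the symmetrized tuple $(g_{s_1},\dots,g_{s_{n+1}})$ from the proof of Proposition~\ref{symm_wit_existence}. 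For that configuration, the relation $\theta_{k}$ together with the $Q$-relation and the uniqueness-of-horn-filling axiom (axiom~(4) of Definition~\ref{quasigroupoid}) pin down $\alpha(r_{i+1})$ as precisely the element making $Q$ hold after we have moved $r_i$ to $\alpha(r_i)$: given $Q(r_1,\dots,r_i,r_{i+1},\dots,r_{n+1})$, the element $x$ with $Q(r_1,\dots,\alpha(r_i),x,r_{i+2},\dots,r_{n+1})$ exists and is unique by connectedness and horn-filling, and the map $r_{i+1}\mapsto x$ is exactly the permutation $\chi$, which by construction sends $r_{i+1}$ to $\alpha(r_{i+1})$.

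The cleanest way to organize the endgame is therefore to observe that $\chi(i,i+1;r_i,\alpha(r_i),\overline{f}')$ \emph{is} the standard action of $\alpha$ on the fiber $P_n(\overline{f}')$: both are permutations of the same finite fiber, both are $\acl(\pi(r_i))$-definable (by definability of types, as in Lemma~\ref{pair_types}), and both send a fixed $r_{i+1}$ to the element determined by the index $k$. Once these two permutations are identified, the defining property of $\chi$ in Lemma~\ref{fiber_actions} is literally the assertion of Lemma~\ref{alt}. I expect the subtle step to be the identification of indices across the two adjacent fibers---ensuring the action is by the \emph{same} $g$ and not a twist---which is where Lemma~\ref{f_prime_independence} (independence of $\chi$ from the choice of $\overline{f}'$) and the transport Lemma get used; everything else is routine bookkeeping with the formulas $\varphi,\psi,\theta_i$ of Definition~\ref{G_notation}.
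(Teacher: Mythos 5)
There is a genuine gap, and it sits exactly at the step you yourself flag as ``the heart'' of the argument. Your proof identifies the horn-filling permutation $\chi(i,i+1;r_i,\alpha(r_i),\overline{f}')$ of Lemma~\ref{fiber_actions} with the standard action of $\alpha$ on the adjacent fiber, but this identification is asserted rather than proved: you write that the map $r_{i+1}\mapsto x$ (where $x$ is the unique filler with $Q(r_1,\ldots,\alpha(r_i),x,\ldots,r_{n+1})$) ``is exactly the permutation $\chi$, which by construction sends $r_{i+1}$ to $\alpha(r_{i+1})$.'' By construction $\chi$ sends $r_{i+1}$ to the $Q$-horn-filler $x$; the statement that $x=\alpha(r_{i+1})$, i.e.\ that the filler is the element related to $r_{i+1}$ by the \emph{same} $\theta_k$ that relates $r_i$ to $\alpha(r_i)$, is precisely Lemma~\ref{alt}. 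None of the results you invoke can close this circle: Lemma~\ref{fiber_actions} only establishes that $\chi$ is well defined; Lemma~\ref{f_prime_independence} only says that, for two pairs drawn from the \emph{same} source fiber, the comparison of their induced permutations is independent of the choice of target fiber; and the transport maps $\Phi_{\overline{c},\overline{c}'}$ match $\theta$-indices across fibers \emph{by definition} and carry no information about $Q$. Note that $G$, its $\theta$-indices, and its standard action (Definitions~\ref{G_notation}--\ref{std_action_G}) are defined with no reference to $Q$ whatsoever, so any proof must build a genuinely new bridge between the two; the paper does this by adjoining a fresh Morley point $a_{n+2}$, defining $r'_{i+1}$ as the horn filler, and constructing---via two rounds of relative $(n,n)$-uniqueness amalgamation (Claims~\ref{sigma_0} and \ref{sigma_full})---an elementary map $\varphi$ of $\ov{A}_{[n+2]}$ that fixes the faces through $a_{n+2}$ not involving $i,i+1$, sends $r_i\mapsto\alpha(r_i)$ and $r_{i+1}\mapsto r'_{i+1}$; evaluating $\varphi$ on auxiliary $Q$-relations through a common fiber $P_n(\overline{g})$ over a face containing $a_{n+2}$ shows the two pairs induce the same permutation, hence (by the pair-type count of Lemma~\ref{pair_types}) have the same type, hence $r'_{i+1}=\alpha(r_{i+1})$. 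Your proposal contains no substitute for this construction.

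A secondary problem is your reduction to ``one canonical configuration'' via $(\le n)$-uniqueness. Transporting an arbitrary compatible $(n+1)$-tuple satisfying $Q$ onto the symmetrized tuple $(g_{s_1},\ldots,g_{s_{n+1}})$ requires an elementary map defined simultaneously on all $n+1$ faces $\ov{A}_{[n+1]\setminus\{j\}}$; amalgamating compatible face maps into a single elementary map at this level is relative $(n+1,n+1)$-uniqueness, which follows from $B(n+1)$---and $B(n+1)$ is exactly what fails in the setting of this section. The $(\le n)$-uniqueness hypothesis lets you match any $n$ of the faces (as in the connectedness argument of Proposition~\ref{symm_wit_existence}), but not all $n+1$ at once, so the reduction is unjustified as stated.
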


\begin{proof}
Suppose that $(a_1, \ldots, a_{n+1})$ is the realization of $I^{(n+1)}$ such that for any $k \in \{1, \ldots, {n+1}\}$, $\pi(r_k) = (a_1, \ldots, \widehat{a}_k, \ldots, a_{n+1})$, and pick any $a_{n+2}$ realizing the nonforking extension of $\tp(a_1)$ to $(a_1, \ldots, a_{n+1})$. Throughout the argument below, we let $s_k := [n+2] \setminus \{k\}$ and $s_{k,\ell} := [n+2] \setminus \{\ell, k\}$, and we use the same symbol $s_k$ or $s_{k,\ell}$ to denote the ordered tuple enumerating the elements of this set in ascending order.

To ease notation, let $$\overline{A}_k = \ov{A}_{s_k} =  \acl(a_1, \ldots, \widehat{a_k}, \ldots, a_{n+2})$$ and $$\overline{A}_{k, \ell} = \ov{A}_{s_{k,\ell}} = \acl(a_1, \ldots, \widehat{a_k}, \ldots, \widehat{a_\ell}, \ldots, a_{n+w}).$$ 

Use Lemma~\ref{n_system_existence} to fix some $(n-1)$-symmetric system $\{\ov{a}_s : s \in [n+2]^{(\leq n-1)} \}$ for $\{a_1, \ldots, a_{n+2}\}$, and let ``$\ov{a}_{\bd(s_{i, n+2})}$'' be the tuple $$(\ov{a}_{s_{1,i, n+2}}, \ldots, \ov{a}_{s_{i, n+1, n+2}}) .$$ 

Let $r'_{i+1}$ be the unique element such that $$\models Q(r_1, \ldots, r_{i-1}, \alpha(r_i), r'_{i+1}, r_{i+2}, \ldots, r_{n+1}).$$ We will show that $$(*) \hspace{.3in} (r_i, \alpha(r_i), \overline{a}_{\bd(s_{i,n+2})}) \equiv (r_{i+1}, r'_{i+1}, \overline{a}_{\bd(s_{i+1, n+2})}),$$ which immediately implies that $r'_{i+1} = \alpha(r_{i+1})$, finishing the proof of the Lemma.


Pick some compatible $n$-tuple $\overline{g} = \langle g_k : k \in [n+2] \setminus \{i, i+1\} \rangle$ from $P_{n-1}$ such that $$\pi(g_k) = (a_1, \ldots, \widehat{a}_i, \widehat{a}_{i+1}, \ldots, \widehat{a}_k, \ldots, a_{n+2})$$ and $g_{n+2} = \pi^n_i(r_{i+1}) = \pi^n_{i+1}(r_i)$. To prove $(*)$, it suffices to show:

$$(**) \hspace{.3in} \chi(1, i+1; r_i, \alpha(r_i), \overline{g}) = \chi(1, i+1; r_{i+1}, r'_{i+1}, \overline{g}).$$

\begin{claim}
\label{sigma_0}
There is a system of elementary maps $\langle \varphi_s : s \subseteq s_0 \rangle$ satisfying all of the following properties:
\begin{enumerate}
\item $\varphi_s \in \Aut(\overline{A}_s)$;
\item If $s \subseteq t$, then $\varphi_s \subseteq \varphi_t$;
\item If $\{1, \ldots, \widehat{i}, i+1, \ldots, n+1\} \subseteq s$, then $\varphi_s(r_i) = \alpha(r_i)$;
\item If $\{1, \ldots, i, \widehat{i+1}, \ldots, n+1\} \subseteq s$, then $\varphi_s(r_{i+1}) = r'_{i+1}$;
\item If $|s| < n$, then $\varphi_s$ is the identity map; and
\item If $k \in \{1, \ldots, n+1\} \setminus \{i, i+1\}$ and $s \subseteq [n+2] \setminus \{k\}$, then $\varphi_s$ is the identity map.

\end{enumerate}
\end{claim}

To prove Claim~\ref{sigma_0}, first pick the maps $\varphi_s$ for every $s \subseteq s_{i+1, n+2}$ satisfying conditions~(4) and (5) using the fact that (by our assumptions) $$(r_{i+1}, \overline{a}_{\bd(s_{n+2})}) \equiv (r'_{i+1}, \overline{a}_{\bd(s_{n+2})}).$$ Then use relative $(n,n)$-uniqueness over the base set $\acl(a_i)$ to find a map $\varphi_{s_{n+2}} \in \Aut(\acl(a_1, \ldots, a_{n+1}))$ which extends $\varphi_{s_{ i+1, n+2}}$ and the identity maps on $\ov{A}_{s_{k, n+2}}$ for every $k \in [n+1] \setminus \{i,i+1\}$. The fact that condition~(3) is satisfied is immediate from the definable $Q$ relation which is preserved by the $\sigma$ maps.

\begin{claim}
\label{sigma_full}

We may extend the system of maps in Claim~\ref{sigma_0} to a system of maps $\langle \varphi_s : s \subseteq [n+2]\rangle$ which continues to satisfy (1)-(6).

\end{claim}

To prove Claim~\ref{sigma_full}, we simply use relative $(n,n)$-uniqueness over the base set $\acl(a_i, a_{i+1})$ to amalgamate the map $\varphi_{s_{n+2}}$ from Claim~\ref{sigma_0} with the identity maps on $\ov{A}_{s_k}$ for each $k \in [n+1] \setminus \{i, i+1\}$ (noting that this is coherent by condition~(5) of Claim~\ref{sigma_0}).

Now that we have Claim~\ref{sigma_full}, write $\varphi$ for the map $\varphi_{[n+2]} \in \Aut(\ov{A}_{[n+2]})$. Fix some arbitrary $k \in P_n(\overline{g})$ and elements $\langle h_k, h'_k : k \in \{1, \ldots, n+1\} \setminus \{i, i+1\} \rangle$ such that $$\pi(h_k) = (a_1, \ldots, \widehat{a}_i, \ldots, \widehat{a}_k, \ldots, a_{n+2}),$$ $$\pi(h'_k) = (a_1, \ldots, \widehat{a}_{i+1}, \ldots, \widehat{a}_k, \ldots, a_{n+2}),$$ and $$(***) \hspace{.2in} Q(r_i, h_1, \ldots, h_{i-1}, k, h_{i+2}, \ldots, h_{n+1}) \wedge Q(r_{i+1}, h'_1, \ldots, h'_{i-1}, k, h'_{i+2}, \ldots, h'_{n+1}).$$ Then applying the map $\varphi$ to both relations in $(***)$, we have that $$Q(\alpha(r_i), h_1, \ldots, h_{i-1}, \varphi(k), h_{i+2}, \ldots, h_{n+1}) \wedge Q(r'_{i+1}, h'_1, \ldots, h'_{i-1}, \sigma(k), h'_{i+2}, \ldots, h'_{n+1}).$$ Comparing the two $Q$ relations above, we have that, on the one hand, $$\varphi(k) = \left[\chi(1, i+1; r_i, \alpha(r_i), \overline{g})\right](k),$$ and on the other hand, $$\varphi(k) = \left[\chi(1, i+1; r_{i+1}, r'_{i+1}, \overline{g})\right](k).$$ Since $k \in P_n(\overline{g})$ was arbitrary, we conclude that $\chi(1, i+1; r_i, \alpha(r_i), \overline{g}) = \chi(1, i+1;r_{i+1}, r'_{i+1}, \overline{g})$, establishing $(**)$ and hence the Lemma.

\end{proof}

\begin{corollary}
\label{G_abelian}
$G$ is abelian.
\end{corollary}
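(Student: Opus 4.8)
The plan is to observe that Corollary~\ref{G_abelian} is an immediate consequence of the machinery already assembled. The strategy is to verify that the standard action of $G$ from Definition~\ref{std_action_G}, glued across fibers via the canonical isomorphisms $\Phi_{\ov{c}, \ov{c}'}$ of the preceding Lemma, constitutes a regular action of $G$ on $\mathcal{H}$ in the precise sense of Definition~\ref{group_action}; once this is in place, we simply invoke Proposition~\ref{alt_2}(2). So the first step would be to define the global action: for $p \in P_n(\ov{c})$ and $\alpha \in G$, set $\alpha.p$ to be the image of $p$ under the standard action of the copy of $\alpha$ in $G_{\ov{c}}$, and note that the commuting isomorphisms $\Phi_{\ov{c}, \ov{c}'}$ guarantee these fiberwise actions assemble into a single, well-defined action of the abstract group $G$ on all of $P = P_n$.

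Next I would check the three axioms of Definition~\ref{group_action} in turn. Axiom~(1), that $\pi^n(\alpha.p) = \pi^n(p)$, is built into the definition of $\sim_{\ov{c}}$ and $\theta_i$: the relation $\theta_i(f, \alpha(f), \ov{c})$ entails $\psi(f, \ov{c}) \wedge \psi(\alpha(f), \ov{c})$, and by clause~(6) of Definition~\ref{G_notation} this forces $\pi^n(f) = \ov{c} = \pi^n(\alpha(f))$, so the action stays within each fiber. Axiom~(2), regularity of $G$ on each fiber $P_n(w)$, is exactly the content of the group construction in Definitions~\ref{group_comp} and \ref{G_f}: the fiber $\psi(\mathfrak{C}, \ov{c})$ has precisely $N$ elements (Lemma~\ref{pair_types}), the group $G_{\ov{c}}$ is the set of $\sim_{\ov{c}}$-classes of pairs with $|G| = N$, and the rule $\alpha(f) =$ the unique element with $\theta_i(f, \alpha(f), \ov{c})$ is transitive and free by the uniqueness built into clause~(5) of Definition~\ref{G_notation}.

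Axiom~(3) is the substantive one, but it is precisely Lemma~\ref{alt}: if $Q(r_1, \ldots, r_{n+1})$ holds, then $Q(r_1, \ldots, r_{i-1}, \alpha(r_i), \alpha(r_{i+1}), r_{i+2}, \ldots, r_{n+1})$ holds for every $\alpha \in G$ and every $i \in \{1, \ldots, n\}$. Thus all of the hard work needed for the corollary was already done in the proof of that lemma. Finally, since our fixed quasigroupoid is connected and is a symmetric witness to the failure of $(n+1)$-uniqueness, the relation $Q$ holds on at least one $(n+1)$-tuple, so the hypothesis of Proposition~\ref{alt_2} is met; applying part~(2) of that proposition yields that $G$ is abelian.

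I do not expect any real obstacle here, since the corollary is essentially a formal consequence of Lemma~\ref{alt} combined with Proposition~\ref{alt_2}. The only point requiring a moment's care is the bookkeeping in the first step: one must confirm that the fiberwise standard actions are identified consistently through the isomorphisms $\Phi_{\ov{c}, \ov{c}'}$, so that what acts is genuinely the single group $G$ (identified with every $G_{\ov{c}}$, as discussed just after the Lemma on $\Phi_{\ov{c}, \ov{c}'}$) rather than a family of unrelated groups; this is guaranteed by the commutativity relation $\Phi_{\ov{c}', \ov{c}''} \circ \Phi_{\ov{c}, \ov{c}'} = \Phi_{\ov{c}, \ov{c}''}$ established there.
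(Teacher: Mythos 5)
Your proposal is correct and follows essentially the same route as the paper: the paper's proof likewise combines Lemma~\ref{alt} with the regularity of the standard action on each fiber $\psi(\mathfrak{C}, \overline{c})$ and then runs the argument of Proposition~\ref{alt_2}(2). The only difference is presentational --- the paper repeats the computation of Proposition~\ref{alt_2}(2) in this setting, while you verify the axioms of Definition~\ref{group_action} so that the proposition can be invoked as a black box; both rest on exactly the same ingredients.
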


\begin{proof}
This follows by the same argument as in Proposition~\ref{alt_2}~(2), using Lemma~\ref{alt} plus the regularity of the action of $G$ on each set $\psi(\C, \overline{c})$ such that $\models \varphi(\overline{c})$.
\end{proof}

\begin{lemma}
\label{strong_skel_uniq}
Suppose that $k \geq n$, $(a_1, \ldots, a_k), (b_1, \ldots, b_k) \in I^{(k)}$, and that $(A_1, \ldots, A_{n-1})$ and $(B_1, \ldots, B_{n-1})$ are maximal compatible systems going up to $P_{n-1}$ in the sense of Definition~\ref{compatible} such that $A_1 = \{a_1, \ldots, a_k\}$ and $B_1 = \{b_1, \ldots, b_k\}$. Let $$\acl_i(\overline{a}) = \bigcup_{u \subset_i \{1, \ldots, k\}} \acl(\overline{a}_u),$$ and $\acl_i(\overline{b})$ is defined similarly. Then there is an elementary map $f : \acl_{n-1}(\overline{a}) \rightarrow \acl_{n-1}(\overline{b})$ such that $f(a_i) = b_i$ for every $i \in \{1, \ldots, k\}$ and $f(A_j) = B_j$ for every $j \in \{1, \ldots, n-1\}$.
\end{lemma}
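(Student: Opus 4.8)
The plan is to construct, by induction on $m = 1, \ldots, n-1$, a coherent system of elementary bijections
$$\varphi_u \colon \acl(\overline{a}_u) \to \acl(\overline{b}_u) \qquad (u \subseteq [k],\ 1 \le |u| \le n-1)$$
such that $\varphi_u(a_i) = b_i$ for $i \in u$, such that $t \subseteq u$ implies $\varphi_t \subseteq \varphi_u$, and — crucially — such that $\varphi_u$ carries the distinguished fiber element of the first system to that of the second. By maximality of the compatible systems, for each $u$ with $2 \le |u| \le n-1$ there is a unique $\alpha_u \in A_{|u|} \cap P_{|u|}(\overline{a}_u)$ and a unique $\beta_u \in B_{|u|} \cap P_{|u|}(\overline{b}_u)$; I also set $\alpha_{\{i\}} = a_i$ and $\beta_{\{i\}} = b_i$. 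Once the system is built, Lemma~\ref{skeletal_maps} glues the maps $\langle \varphi_u : |u| \le n-1 \rangle$ into a single elementary $\varphi \colon \overline{A}_{[k]} \to \overline{B}_{[k]}$, and its restriction $f$ to $\acl_{n-1}(\overline{a}) = \bigcup_{|u| = n-1} \acl(\overline{a}_u)$ maps onto $\acl_{n-1}(\overline{b})$. Since $f \supseteq \varphi_u$ for every $u$ and $A_j = \{\alpha_u : |u| = j\}$, $B_j = \{\beta_u : |u| = j\}$, we obtain $f(A_j) = B_j$ for all $j \le n-1$, as required.

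For the base case $m = 1$ the map $\varphi_{\{i\}}$ sending $a_i \mapsto b_i$ exists because $I$ is indiscernible. For the inductive step, fix $u$ with $|u| = m$ and write $\overline{a}_u = (a_{i_1}, \ldots, a_{i_m})$. The family $\langle \varphi_t : t \subsetneq u \rangle$ is a coherent system of bijections between the proper faces, so by Lemma~\ref{skeletal_maps} (applied with $m$ in place of both $n$ and $k$, legitimate since $T$ has $(\le m)$-uniqueness) I obtain an elementary bijection $\varphi_u^0 \colon \acl(\overline{a}_u) \to \acl(\overline{b}_u)$ extending all of them. Because each $\varphi_t$ already sends $\alpha_t \mapsto \beta_t$, the map $\varphi_u^0$ sends $\pi^m(\alpha_u) = (\alpha_{u \setminus \{i_1\}}, \ldots, \alpha_{u \setminus \{i_m\}})$ to $\pi^m(\beta_u)$ and carries $\bd(\overline{a}_u)$ onto $\bd(\overline{b}_u)$. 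The only possible defect is that $\gamma := \varphi_u^0(\alpha_u)$ need not equal $\beta_u$, so the step reduces to correcting $\varphi_u^0$ on this single fiber element.

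The heart of the matter is the sub-claim $\gamma \equiv_{\bd(\overline{b}_u)} \beta_u$. Granting it, pick $\psi \in \Aut(\mathfrak{C} / \bd(\overline{b}_u))$ with $\psi(\gamma) = \beta_u$; as $m \ge 2$ we have $\overline{b}_u \subseteq \bd(\overline{b}_u)$, so $\psi$ stabilizes $\acl(\overline{b}_u)$ and fixes $\bigcup_j \acl(\overline{b}_{u \setminus \{i_j\}}) \subseteq \bd(\overline{b}_u)$ pointwise, whence $\varphi_u := (\psi \restriction \acl(\overline{b}_u)) \circ \varphi_u^0$ is an elementary bijection with $\varphi_u(\alpha_u) = \beta_u$ that still extends every $\varphi_t$, completing the step. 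To prove the sub-claim I use that, by the structure of the symmetric witness, each fiber $P_m(\overline{b}_u)$ over an $I$-tuple is a single complete type over $\overline{b}_u$ (at the top level this is Lemma~\ref{pair_types}); since $\gamma, \beta_u \in P_m(\overline{b}_u)$, there is $\sigma \in \Aut(\mathfrak{C} / \overline{b}_u)$ with $\sigma(\gamma) = \beta_u$. As $\pi^m$ is definable and $\pi^m(\gamma) = \pi^m(\beta_u) =: \overline{h}$, applying $\sigma$ coordinatewise forces $\sigma$ to fix $\overline{h}$ pointwise, so $\gamma \equiv_{\overline{h}} \beta_u$. Finally, by isolation of types (Definition~\ref{symm_wit}(1a)) both $\tp(\gamma / \bd(\overline{b}_u))$ and $\tp(\beta_u / \bd(\overline{b}_u))$ are isolated over $\overline{h}$, and $\overline{h} \subseteq \bd(\overline{b}_u)$ by algebraicity (Definition~\ref{symm_wit}(2)); since $\gamma$ and $\beta_u$ agree over $\overline{h}$, the common isolating formula yields $\gamma \equiv_{\bd(\overline{b}_u)} \beta_u$.

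The main obstacle is exactly this sub-claim — matching the two distinguished fiber elements — and it is the only point where the defining properties of a symmetric witness (isolation and algebraicity), rather than mere skeletal uniqueness, are invoked; the projection-fixing observation is what upgrades ``same type over the $I$-tuple'' to ``same type over $\bd$,'' which is precisely what the correcting automorphism needs. All remaining ingredients — the two applications of Lemma~\ref{skeletal_maps}, equivalently relative $(m,m)$- and $(n,k)$-uniqueness, available via Fact~\ref{skeletal_uniqueness} from $(\le n)$-uniqueness — are routine diagram-chasing.
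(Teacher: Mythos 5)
Your overall route is the paper's intended one: the paper's own proof of this lemma is a single sentence citing exactly the two ingredients you use, namely Lemma~\ref{skeletal_maps} for the gluing and the isolation-of-types clause of Definition~\ref{symm_wit} for matching distinguished fiber elements. Your correction step for a single fiber element — producing $\varphi^0_u$ from the lower maps, observing that any automorphism over $\overline{b}_u$ moving $\gamma$ to $\beta_u$ must fix $\pi^m(\beta_u)$ pointwise, and then upgrading equality of types over $\pi^m(\beta_u)$ to equality over $\bd(\overline{b}_u)$ via the common isolating formula — is carried out correctly.

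The genuine gap is in your final line, where you assert $A_j = \{\alpha_u : |u| = j\}$ and conclude $f(A_j) = B_j$. By the paper's conventions, $A_1^{(j)}$ is the set of all \emph{ordered} $j$-tuples of distinct elements of $A_1$, and a maximal compatible system (Definition~\ref{compatible} together with Claim~\ref{face_fill}(2)) contains exactly one element of $P_j(w)$ for \emph{every} $w \in A_1^{(j)}$; moreover fibers over distinct orderings of the same set are disjoint, since ``over'' in Definition~\ref{over_P1} is defined for ordered tuples (compare Section~4, where $P(a_1,a_2,a_3)$ and $P(a_2,a_1,a_3)$ are genuinely different fibers related by $\iota_\sigma$). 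So $|A_j| = k!/(k-j)!$, and your family $\{\alpha_u : |u| = j\}$, indexed by subsets (i.e.\ by increasing tuples), is a proper subfamily of $A_j$. Your induction only guarantees that $f$ matches the system elements sitting over increasing tuples: for a non-increasing ordering $w$ of $u$, the element $f(\alpha_w)$ lands in the correct fiber, but nothing forces it to equal the element $\beta_{w'}$ that the $B$-system chose there, because the two systems' choices over different orderings of the same set are independent of one another apart from projection-compatibility. To repair this, the correction step for $u$ must send \emph{all} $|u|!$ elements $\gamma_w := \varphi^0_u(\alpha_w)$, with $w$ ranging over the orderings of $u$, to the corresponding $\beta_{w'}$ simultaneously; this requires equality of the \emph{joint} types $\tp\bigl((\gamma_w)_w / \bd(\overline{b}_u)\bigr)$ and $\tp\bigl((\beta_{w'})_{w'} / \bd(\overline{b}_u)\bigr)$, and that does not follow from what you invoke: clause~(1a) of Definition~\ref{symm_wit} isolates only the type of a single fiber element, and clause~(1b) only covers pairs in $P_n$ with the \emph{same} projection. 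A smaller caveat of the same kind: your sub-claim uses that $P_m(\overline{b}_u)$ is a single complete type over $\overline{b}_u$, which holds for the witness built in Proposition~\ref{symm_wit_existence} and is used implicitly by the paper in Lemma~\ref{pair_types}, but it is not among the stated clauses of Definition~\ref{symm_wit}.
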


\begin{proof}
Apply Lemma~\ref{skeletal_maps} and the ``isolation of types'' clause for a symmetric witness to the failure of $(n+1)$-uniqueness (part~(1) of Definition~\ref{symm_wit}).



\end{proof}

Finally, we put all the pieces together to prove the main theorem of the section.

\emph{Proof of Theorem~\ref{polygroup_definability}:}
Recall that we are assuming $T$ is stable with $k$-uniqueness for every $k \leq n$ and that $T$ does not have $(n+1)$-uniqueness over $A$. By Proposition~\ref{symm_wit_existence}, there is a symmetric witness to the failure of $(n+1)$-uniqueness over $A$. In other words, there is an infinite Morley sequence $I$ over some $A = \acl(A)$ and a relatively $I$-definable connected $n$-ary quasigroupoid $\mathcal{H} = (I, P_2, \ldots, P_n, Q)$ satisfying (1) and (2) of Definition~\ref{symm_wit}. Clause (2) of this definition implies that $\mathcal{H}$ is locally finite.

The group $G$ defined in Definition~\ref{group_comp} and its standard action (Definition~\ref{std_action_G}) form a regular action on the quasigroupoid $\mathcal{H}$ by Lemma~\ref{alt}. By Lemma~\ref{strong_skel_uniq} above, this action is $(n+2)$-homogeneous (as in Definition~\ref{homog_action}).  Now Theorem~\ref{associativity_action} above implies that, possibly after replacing $Q$ by $Q_g$ for some $g \in G$ (which certainly preserves relative $I$-definability), we may assume that $Q$ is associative. So $\mathcal{H}$ is an $n$-ary polygroupoid. Finally, condition~(4) of Theorem~\ref{polygroup_definability} is satisfied by $\mathcal{H}$ due to clause~(1) of Definition~\ref{symm_wit} (``isolation of types'').


For the converse, suppose that we have a Morley sequence $I$ over $A$ and a relatively $I$-definable $n$-ary polygroupoid $\mathcal{H}$ satisfying (1)-(4) of the hypothesis. Then for any $(a_1, \ldots, a_{n+1}) \in I^{(n+1)}$ and any $f \in P_n(a_1, \ldots, a_n)$, evidently $f \in \acl_A(a_1, \ldots, a_n)$ (because $\mathcal{H}$ is locally finite), $f \notin \bd_A(a_1, \ldots, a_n)$ (by clause~(4) and the fact that $G \neq 1$), and $$f \in \dcl(\acl_A(\widehat{a}_1 a_2 \ldots a_{n+1}), \acl_A(a_1 \widehat{a}_2 \ldots a_{n+1}), \ldots, \acl_A(a_1 \ldots \widehat{a}_n a_{n+1})$$ (by picking appropriate elements $f_i \in P_n(a_1, \ldots, \widehat{a}_i, \ldots, a_{n+1})$ and using the $Q$ relation). Therefore $T$ fails to have the property defined as $B(n+1)$ above which is equivalent to $(n+1)$-uniqueness in any stable theory with $n$-uniqueness (Fact~\ref{Bn_uniqueness}). $\dashv$

\bigskip

It is worth noting that the $n$-ary polygroupoid constructed above is totally categorical:

\begin{corollary}
If $T$ is a stable theory with $(\leq n)$ uniqueness but not $(n+1)$-uniqueness, then there is some Morley sequence $I$ and some relatively $I$-definable $n$-ary polgroupoid $\mathcal{H} = (I, P_2, \ldots, P_n, Q)$ witnessing the failure of $(n+1)$-uniqueness such that the complete theory of $\mathcal{H}$ is totally categorical, in the $n$-sorted language with sorts for $I, P_2, \ldots, P_n$, a relation symbol for $Q$, and function symbols for the projection maps $\pi^i : P_i \rightarrow (P_{i-1})^i$.

More precisely, there is a relatively $I$-definable, totally categorical $n$-ary polygroupoid as above such that some closed independent $(n+1)$-amalgamation problem $\mathcal{A}$ in $\mathcal{H}$ has two non-isomorphic solutions, and this yields a closed independent $(n+1)$-amalgamation problem in the original theory $T$ with two non-isomorphic solutions (since the set $I$ is independent and the definable structure on $\mathcal{H}$ is definable from $T$).

\end{corollary}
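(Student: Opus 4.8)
The plan is to verify the three ingredients of total categoricity separately for the reduct $\mathcal{H}$ in the polygroupoid language --- $\aleph_0$-categoricity, $\omega$-stability, and the absence of Vaughtian pairs --- and then conclude via the Baldwin--Lachlan theorem together with Ryll--Nardzewski. Throughout I would lean on the structural features of the $\mathcal{H}$ produced by Theorem~\ref{polygroup_definability}: it is connected (Definition~\ref{connected}) and locally finite (Definition~\ref{local_finiteness}), $I = P_1$ is an infinite Morley sequence over $A$, and every $f \in P_i$ is algebraic over its support $\supp(f) \in I^{(i)}$, with fibers $P_i(\overline{a})$ of a constant finite size (by the isolation-of-types and algebraicity clauses of Definition~\ref{symm_wit}).

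First I would establish a lifting principle: every permutation of $I$ extends to an automorphism of $\mathcal{H}$. Since $I$ is a Morley sequence over $A$, it is an indiscernible set, so any permutation of $I$ is an $A$-elementary map and extends to some $\sigma \in \Aut(\mathfrak{C}/A)$ with $\sigma(I) = I$ setwise; because the sorts $P_i$, the maps $\pi^i$, and the relation $Q$ are all relatively $I$-definable, $\sigma$ restricts to an automorphism of $\mathcal{H}$. With this in hand, $\aleph_0$-categoricity follows from Ryll--Nardzewski by a crude orbit count: a $k$-tuple $\overline{e}$ from $\mathcal{H}$ has support $S = \bigcup_j \supp(e_j) \subseteq I$ of size $m \le nk$, and $\overline{e}$ lies in the finite set $\cl(S)$; fixing for each $m \le nk$ a reference $m$-subset $S_0^{(m)} \subseteq I$ and using the lifting principle to move $S$ onto $S_0^{(m)}$, every $k$-tuple is $\Aut(\mathcal{H})$-conjugate to one inside $\cl(S_0^{(m)})^k$, so the number of orbits on $\mathcal{H}^k$ is at most $\sum_{m \le nk} |\cl(S_0^{(m)})|^k < \infty$.

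Next I would record that $\mathcal{H}$ is coordinatized by $I$: the set $I$ is strongly minimal with degenerate pregeometry (by $\aleph_0$-categoricity and indiscernibility, every definable subset of $I$ either omits or contains all points generic over the parameters, and there is no nontrivial algebraicity inside $I$), and every element of every model lies in $\acl(I)$ by local finiteness and connectedness. Hence $\Th(\mathcal{H})$ has finite Morley rank and is $\omega$-stable. To rule out Vaughtian pairs I would argue that any infinite parameter-definable set $E = \phi(x,\overline{b})$ has cofinite support in $I$: by indiscernibility of $I$ over $\overline{b}$, once one element of $E$ has a point of $I$ generic over $\overline{b}$ in its support, every such generic point occurs in the support of some element of $E$, so $\supp(E) \supseteq I \setminus \supp(\overline{b})$. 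Thus if $M \prec N$ with $M \ne N$, then $I(M) \subsetneq I(N)$ (otherwise $N \subseteq \acl(I(N)) = \acl(I(M)) \subseteq M$), and any $a \in I(N) \setminus I(M)$ lies in the support of some $e \in E(N)$; such an $e$ cannot lie in $M$, so $E(M) \ne E(N)$. Therefore $\Th(\mathcal{H})$ has no Vaughtian pair, and by Baldwin--Lachlan it is uncountably categorical; with the previous step this gives total categoricity.

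Finally, the ``more precisely'' clause is essentially a restatement of what Theorem~\ref{polygroup_definability} already yields: over an $(n+1)$-subset of $I$ one forms the closed independent $(n+1)$-amalgamation problem whose top pieces are fibers of $P_n$, and two solutions differing by a nonzero element of $G$ are non-isomorphic (this is exactly the failure of $B(n+1)$, equivalently of $(n+1)$-uniqueness, via Fact~\ref{Bn_uniqueness} and Fact~\ref{Bn_equivalent}); since $I$ is $A$-independent in $T$ and the whole polygroupoid is $T$-definable over $I$, the same problem and its two solutions transfer back to $T$. I expect the only genuinely delicate step to be the lifting principle and the attendant bookkeeping --- making sure that the automorphism group of the \emph{reduct} $\mathcal{H}$ (and not merely of $\mathfrak{C}$) is rich enough, and that the fibers have a single finite size independent of the base tuple --- since the remaining model-theoretic steps are then routine applications of standard categoricity theory.
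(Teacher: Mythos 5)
Your proposal is correct and takes essentially the same route as the paper's proof: permutations of $I$ lift to automorphisms of $\mathcal{H}$ (since $I$ is indiscernible and the structure is relatively $I$-definable), local finiteness plus orbit counting gives $\aleph_0$-categoricity, strong minimality of $I$ together with $\mathcal{H} \subseteq \acl(I)$ gives uncountable categoricity, and the finite fibers of $P_n$ tied together by the $Q$-relation furnish the two non-isomorphic solutions of the $(n+1)$-amalgamation problem, which transfer back to $T$. The only divergence is that you expand the final step through Baldwin--Lachlan ($\omega$-stability plus an explicit no-Vaughtian-pairs argument), whereas the paper directly invokes the standard coordinatization fact that a structure algebraic over a strongly minimal set is uncountably categorical; this is a fleshing-out of the same argument rather than a different one.
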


\begin{proof}
Let $\mathcal{H}$ be the $n$-ary polygroupoid constructed in Theorem~\ref{polygroup_definability}. At this point it is clear that there is a closed independent $(n+1)$-amalgamation problem over $\acl(\emptyset)$ in $\mathcal{H}$ with two non-isomorphic solutions (use the algebraic closures of independent elements of $I$ as the ``vertices'' $\mathcal{A}(\{i\})$, and the non-uniqueness of solutions comes from the interdefinability of fibers in $P_n$ given by the $Q$ relation).

For the $\omega$-categoricity of $\mathcal{H}$, we show that for every $k > 0$ there are finitely many complete $k$-types over $\emptyset$.  Since $I$ is an indiscernible set in $T$, for every permutation $\sigma$ of $I$ there is an automorphism $\varphi_\sigma$ of the structure $\mathcal{H}$ which extends $\sigma$. Therefore if $A$ is any set of $n \cdot k$ distinct elements of $I$, any orbit of any $(f_1, \ldots, f_k)$ under $\Aut(\mathcal{H})$ intersects $\cl(A)$. But since $\mathcal{H}$ is locally finite, $\cl(A)$ is finite.

Finally, to show that the theory of $\mathcal{H}$ is uncountably categorical, note that for any finite $A \subseteq \mathcal{H}$, the set $I \setminus \supp(A)$ is an indiscernible set over $A$ (since any permutation of $I \setminus \supp(A)$ can be extended to an elementary map in $\Aut(\mathfrak{C} / A)$, and hence to an automorphism of $\mathcal{H}$ fixing $A$). Thus $I$ is a strongly minimal set in the theory of $\mathcal{H}$, and since $\mathcal{H}$ is the algebraic closure of $I$, it is uncountably categorical.

\end{proof}

\section{Examples of $n$-ary quasigroupoids and $n$-ary polygroupoids}

In this section, we describe a family of examples of connected $n$-ary quasigroupoids and $n$-ary polygroupoids. These are the simplest possible $n$-ary quasigroupoids equipped with a regular action of a finite abelian group $G$: in these examples, the fibers $P_2, \ldots, P_{n-1}$ are all ``trivial'' in the sense that for any $k \in \{2, \ldots, n-1\}$ and any $(a_1, \ldots, a_k) \in I^{(k)}$, there is a unique element in $P_k(a_1, \ldots, a_k)$. We will show that their complete theories (in a suitable language) are totally categorical, eliminate quantifiers, have $k$-uniqueness for every $k \leq n$, and fail to have $(n+1)$-uniqueness.

The language $L_n$ for these examples is as follows. There are $(n+1)$ sorts $I = P_1, P_2, \ldots, P_n = P$ and $G$, plus function symbols for $\pi^k : P_k \rightarrow (P_{k-1})^k$. There are function symbols for a binary operation on $G$ and for an action from $(G \times P)$ to $P$, both of which we will denote by ``$+$'' without risk of confusion. There is an $(n+1)$-ary relation symbol $Q$ on $P$. Finally, for every $\sigma$ in the symmetric group $S_n$, there is a function symbol $\iota_\sigma : P \rightarrow P$.

Next we describe the standard countable $L_n$-structure $\mathcal{H}_{G,n}$ whose complete theory will be $T_{G,n}$. This structure depends on choices of both an integer $n \geq 2$ and a finite nontrivial abelian group $G$.

In $\mathcal{H}_{G,n}$, the sort $I$ is interpreted as $\omega$ and for every $k \in \{2, \ldots, n-1\}$, the sort $P_k$ is simply $\omega^{(k)}$. The sort $P_n$ is interpreted as $\omega^{(n)} \times G$. The projection functions $\pi^k : P_k \rightarrow (P_{k-1})^k$ are interpreted in the natural way: for $k < n$ and $i \in \{1, \ldots, k\}$, $$\pi^k_i ((a_1, \ldots, a_k)) = (a_1, \ldots, \widehat{a}_i, \ldots, a_k),$$ and $\pi^n$ is given by $$\pi^n_i((a_1, \ldots, a_n), g) = (a_1, \ldots, \widehat{a}_i, \ldots, a_n).$$ If $(f_1, \ldots, f_{n+1})$ is a compatible $(n+1)$-tuple from $P$ such that $f_i = ((a_1, \ldots, \widehat{a}_i, \ldots, a_{n+1}), g_i)$, then we define $Q$ so that $$Q(f_1, \ldots, f_{n+1}) \Leftrightarrow \sum_{i=1}^{n+1} (-1)^i g_i = 0.$$ The group $G$ acts on $P$ by the rule $$((a_1, \ldots, a_n), g) + h = ((a_1, \ldots, a_n), g+h).$$ Finally, the function symbols $\iota_\sigma : P \rightarrow P$ are interpreted by the rule $$\iota_\sigma( (a_1, \ldots, a_n), g) = (a_{\sigma(1)}, \ldots, a_{\sigma(n)}), \textup{sign}(\sigma)).$$

The following facts about $\mathcal{H}_{G,n}$ are all easily checked from the definition of the structure:

\begin{proposition}
\label{std_model_Tng}
\begin{enumerate}
\item $\mathcal{H}_{G,n}$ is a connected $n$-ary quasigroupoid.
\item For any $k \in \{2, \ldots, n-1\}$ and any $(a_1, \ldots, a_k) \in I^{(k)}$, $|P_k(a_1, \ldots, a_k)| = 1$.
\item $\mathcal{H}_{G,n}$ is associative (hence an $n$-ary polygroupoid).
\item The action of $G$ on $\mathcal{H}_{G,n}$ is regular.
\item For any $\sigma \in S_n$ and $(a_1, \ldots, a_n) \in I^{(n)}$, $\iota_\sigma$ induces a bijection from $P(a_1, \ldots, a_n)$ to $P(a_{\sigma(1)}, \ldots, a_{\sigma(n)})$.
\item For any $\sigma, \tau \in S_n$, $$\iota_{\sigma \circ \tau} = \iota_\sigma \circ \iota_\tau.$$
\item For any $g \in G$, any $\sigma \in S_n$, and any $f \in P$, $$\iota_\sigma (f + g) = \iota_\sigma(f) + (-1)^{\textup{sign}(g)}(g).$$
\item If $f_1, \ldots, f_{n+1}$ is a compatible tuple from $P$ such that $Q(f_1, \ldots, f_{n+1})$ holds, then for any $i \in \{1, \ldots, n-1\}$, if $\sigma_i \in S_n$ is the transposition $(i, i+1)$, then $$Q(\iota_{\sigma_{i-1}}(f_1), \ldots, \iota_{\sigma_{i-1}}(f_{i-1}), f_{i+1}, f_i, \iota_{\sigma_i}(f_{i+2}), \ldots, \iota_{\sigma_i}(f_{n+1}) ).$$
\end{enumerate}
\end{proposition}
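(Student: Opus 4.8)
The plan is to verify each clause directly from the interpretations given above; the only clauses with nonzero content are (1), (3), (4), and (8), and each reduces to a short computation with the alternating sum $\sum_i (-1)^i g_i$ in $G$ together with some bookkeeping of which underlying $I$-coordinates are being permuted.

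For (1) and (2), I would first observe that the projections $\pi^k$ literally delete a coordinate, so the coherence axiom (compatibility of $\pi^k(f)$) and the compatibility-of-$Q$ axiom are immediate — the latter because $Q$ is only declared on compatible tuples — and that for $k<n$ the element of $P_k$ over a given tuple $(a_1,\dots,a_k)\in I^{(k)}$ is the tuple itself, which gives clause (2) and shows that every compatible tuple of lower faces lies in the image of the next projection (the first half of connectedness). The remaining $Q$-axioms are pure linear algebra: writing $f_j = ((\dots),g_j)$, the relation $Q(f_1,\dots,f_{n+1})$ says $\sum_j (-1)^j g_j = 0$, so fixing all but one coordinate and solving for the remaining $g_i$ yields a unique solution in $G$. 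This gives the uniqueness of horn-filling (axiom 4 of Definition~\ref{quasigroupoid}) and the existence of horn-fillers (the second half of connectedness, Definition~\ref{connected}).

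For (3) and (4), the regular action of clause (4) is translation in the $G$-coordinate of each fiber $P_n(a_1,\dots,a_n)=\{(a_1,\dots,a_n)\}\times G$, which is visibly transitive and free and fixes $\pi^n$; the compatibility requirement (3) of Definition~\ref{group_action} holds because acting by $g$ on two adjacent arguments $p_i,p_{i+1}$ changes the alternating sum by $(-1)^i g + (-1)^{i+1} g = 0$. Associativity (clause (3)) is then the ``$\bd^2=0$'' identity already used in the proof of Theorem~\ref{associativity_action}: writing $g^i_j$ for the $G$-component of $p^i_j$, the hypothesis $p^i_j=p^{j+1}_i$ gives $g^i_j = g^{j+1}_i$, and then $\sum_{i=1}^{n+2}(-1)^i\big(\sum_{j=1}^{n+1}(-1)^j g^i_j\big) = \sum_{i,j}(-1)^{i+j}g^i_j = 0$ by the same pairing of terms as in Claim~\ref{Q_defect}; since $n+1$ of the inner sums vanish by hypothesis, so does the last, which is exactly $Q(p^\ell_1,\dots,p^\ell_{n+1})$.

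Clauses (5)--(7) are immediate from the formula for $\iota_\sigma$, using that $\sigma \mapsto \textup{sign}(\sigma)$ is a homomorphism. The one clause requiring genuine care is (8), and I expect it to be the main (if still modest) obstacle: one must correctly identify, for each $k \notin \{i,i+1\}$, how the swap $a_i \leftrightarrow a_{i+1}$ of underlying vertices acts on the support tuple of $f_k$ — it is the transposition $\sigma_{i-1}$ when $k<i$ and $\sigma_i$ when $k>i+1$, which is precisely why those two different twists appear in the statement — and then verify that the new $G$-components still sum to zero. Granting $\textup{sign}(\sigma_{i-1}) = \textup{sign}(\sigma_i) = -1$, the new alternating sum is $-\sum_{k\neq i,i+1}(-1)^k g_k + (-1)^i g_{i+1} + (-1)^{i+1} g_i$, and substituting $\sum_{k\neq i,i+1}(-1)^k g_k = -(-1)^i g_i - (-1)^{i+1}g_{i+1}$ from the hypothesis $Q(f_1,\dots,f_{n+1})$ collapses this to $0$, establishing (8).
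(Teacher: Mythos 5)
Your verification is correct and is exactly the routine check that the paper leaves to the reader (the paper offers no proof beyond asserting that these facts ``are all easily checked from the definition of the structure''): your alternating-sum computations for horn-filling, regularity, and clause (8), and the $\bd^2=0$ pairing argument for associativity, are precisely what the definitions require. Nothing further is needed.
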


We think of the maps $\iota_\sigma$ as ``generalized inverse maps'' on the sort $P$, noting that when $n=2$ and $\sigma = (1 2)$ this really is an inverse map from $\mor(a_1,a_2)$ to $\mor(a_2, a_1)$. It turns out that is necessary to add the maps $\iota_\sigma$ to the basic language if we hope to have elimination of quantifiers. For instance, in the case where $n=3$, consider any $(a_1, a_2, a_3, a_4) \in I^{(4)}$ and pair of elements $(f_1, f_2)$ from $P$ such that $f_1 \in P(a_1, a_2, a_4 )$ and $f_2 \in P(a_2, a_1, a_{4})$. From the pair $(f_1, f_2)$, we can define a bijection $\iota_{(f_1, f_2)}$ from $P(a_1, a_2, a_3)$ to $P(a_2, a_1, a_3)$ by the rule that whenever $$Q(h_1, h_2, f_1, f)$$ holds, then so does $$Q(h_2, h_1, f_2, \iota_{(f_1,f_2)}(f)).$$ As is easy to verify, this definition is independent of the choices of $h_1$ and $h_2$. Thus over $\acl^{eq}(a_1, a_2)$ we have a family of definable bijections from $P(a_1, a_2, b)$ to $P(a_2, a_1, b)$, which it turns out is equal to $\iota_{(12)} + g$ for some fixed $g \in G$.

\begin{definition}
\label{polygroup_with_inverses}
A \emph{connected $n$-ary polygroupoid with inverses} is an $L_n$-structure satisfying all the properties (1) through (8) of Proposition~\ref{std_model_Tng}.
\end{definition}

\begin{definition}
\label{star_set}
If $(I, <)$ is any linearly ordered set such that $| I | \geq n$ with a minimal element $a \in I$, then the \emph{$n$-star of $(I,<)$} is the set $$\Star_n(I, <) := \{ (a, b_1, \ldots, b_{n-1}) : a < b_1 < \ldots < b_{n-1} \}.$$

A set $\mathcal{S} \subseteq I^{(n)}$ is an \emph{$n$-star centered at $a$} or an \emph{$n$-star over $I$} if $\mathcal{S} = \Star_n(I, <)$ for some linear ordering $<$ on $I$ in which $a$ is the minimal element.

If $\mathcal{S}$ is an $n$-star, a \emph{solution on $\mathcal{S}$} is a function  $s : \mathcal{S} \rightarrow P_n$ such that for any $u \in \mathcal{S}$ we have that $s(u) \in P_n(u)$.
\end{definition}

\begin{proposition}
\label{isomorphisms_stars}
Suppose $\mathcal{H} = (I, P_2, \ldots, P_n, Q, G) $ and $\mathcal{H}^* = (I^*, P^*_2, \ldots P^*_n, Q^*, G^*)$ are two connected $n$-ary polygropuoids with inverses with $|I| \geq n$, and that we have the following:

\begin{enumerate}
\item A bijection $\varphi: I \rightarrow I^*$;
\item A group isomorphism $\psi: G \rightarrow G^*$;
\item An $n$-star $\mathcal{S}$ over $I$ centered at $a$, which corresponds via $\varphi$ to an $n$-star $\mathcal{S}^* := \varphi(\mathcal{S})$ centered at $a^* := \varphi(a)$; and
\item Two solutions $s : \mathcal{S} \rightarrow P_n$ and $s^* : \mathcal{S}^* \rightarrow P^*_n$.
\end{enumerate}

Then there is a unique isomorphism $\chi : \mathcal{H} \rightarrow \mathcal{H}^*$ of $L_n$-structures which extends $\varphi \cup \psi$ and satisfies

$$\chi(s(u)) = s^*(\varphi(u))$$

for every $u \in \mathcal{S}$.

\end{proposition}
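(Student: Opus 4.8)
The plan is to build $\chi$ one sort at a time, observing that everything except the action on the top sort $P_n$ is already forced, and then to pin down the action on $P_n$ using a single coherent ``global section'' extending the given solution. First I would transport the linear order witnessing $\mathcal{S}$ across $\varphi$, so that $\varphi$ becomes an order isomorphism of the two index sets and in particular carries increasing tuples to increasing tuples; this lets me build the $s^*$-based data in $\mathcal{H}^*$ by an identical recipe. On the base sort I set $\chi=\varphi$ and on the group sort $\chi=\psi$. For each $k$ with $2\le k\le n-1$, property~(2) of Proposition~\ref{std_model_Tng} says every fiber $P_k(a_1,\dots,a_k)$ is a singleton, so commuting with $\pi^k$ forces $\chi$ to send the unique element of $P_k(a_1,\dots,a_k)$ to the unique element of $P^*_k(\varphi(a_1),\dots,\varphi(a_k))$, automatically compatibly with all projections below $P_n$. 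Thus the whole problem reduces to defining $\chi$ on $P_n$.

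Next I would extend the solution $s$ to a distinguished element $\hat s(u)\in P_n(u)$ for every ordered tuple $u\in I^{(n)}$. For an increasing tuple $v=(c_2,\dots,c_{n+1})$ not containing the center $a$, the set $\{a\}\cup v$ is an $(n+1)$-set all of whose faces other than $v$ lie in the star $\mathcal{S}$; by connectedness (existence of horn fillers, Definition~\ref{connected}) together with uniqueness of horn filling (axiom~(4) of Definition~\ref{quasigroupoid}) there is a unique $\hat s(v)\in P_n(v)$ making $Q$ hold on the distinguished elements over all faces of $\{a\}\cup v$, the given horn being automatically compatible since the lower fibers are singletons. I then propagate to arbitrary tuples by $\hat s(\sigma u):=\iota_\sigma(\hat s(u))$, which is well defined and $\iota$-coherent by properties~(6)--(7). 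Building $\hat s^*$ from $s^*$ the same way, I define $\chi$ on $P_n$ by $\chi(\hat s(u)+g):=\hat s^*(\varphi u)+\psi(g)$, using regularity of the $G$-action.

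The verifications that $\chi$ intertwines the group actions via $\psi$ and commutes with every $\iota_\sigma$ are immediate from the construction, properties~(6)--(7), and the coherence $\hat s(\sigma u)=\iota_\sigma\hat s(u)$. The heart of the argument, and the step I expect to be the main obstacle, is that $\chi$ preserves $Q$. By Proposition~\ref{alt_2}(3), once $Q$ holds on the distinguished tuple over the faces of an $(n+1)$-set $W$, it holds on $(\hat s(u_1)+g_1,\dots,\hat s(u_{n+1})+g_{n+1})$ exactly when $\sum_k(-1)^k g_k=0$; so preservation of $Q$ follows once I show that in each structure the \emph{defect} of $\hat s$ vanishes on every $(n+1)$-set $W$, i.e.\ $Q$ already holds on the distinguished elements over the faces of $W$. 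For $W\ni a$ this is precisely how $\hat s$ was defined. For $W\not\ni a$, I apply the associativity axiom (Definition~\ref{associativity}) to the $(n+2)$-set $W\cup\{a\}$: taking $p^i_j$ to be the distinguished element $\hat s$ over the corresponding $n$-face gives a collection in which $p^i_j=p^{j+1}_i$ (by single-valuedness of $\hat s$ and the simplicial identities) and each row is compatible (the fibers below $P_n$ are singletons); every row except the one indexed by the face $W$ corresponds to an $(n+1)$-set containing $a$ and hence already satisfies $Q$, so associativity forces $Q$ on the $W$-row, i.e.\ zero defect on $W$. (If $|I|\le n+1$ there are no such $W$ and nothing extra is needed.) Since $\psi$ is a group isomorphism, $\sum_k(-1)^k g_k=0$ iff $\sum_k(-1)^k\psi(g_k)=0$, so the vanishing defects in $\mathcal{H}$ and in $\mathcal{H}^*$ yield $Q(f_1,\dots,f_{n+1})\Leftrightarrow Q^*(\chi f_1,\dots,\chi f_{n+1})$.

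Finally, uniqueness is forced at every stage: any admissible $\chi$ must equal $\varphi\cup\psi$ on $I$ and $G$, must agree with the singleton-fiber map on $P_2,\dots,P_{n-1}$, and on $P_n$ is $\psi$-equivariant and hence determined by the images $\chi(\hat s(u))$. These equal $s^*(\varphi u)=\hat s^*(\varphi u)$ on the star by hypothesis, propagate to all increasing tuples because $\chi$ preserves $Q$ and $\hat s^*(\varphi v)$ is the unique horn filler, and propagate to all tuples because $\chi$ commutes with the $\iota_\sigma$. Hence $\chi$ is the unique isomorphism with the required properties.
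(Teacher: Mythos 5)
Your proposal is correct and follows essentially the same route as the paper's proof: both determine the map on fibers whose support contains the center $a$ from the star data via the inverse maps $\iota_\sigma$ and the $G$-action, extend to fibers over tuples avoiding $a$ by filling $Q$-horns at $a$, and handle the crucial case of $Q$-preservation for tuples avoiding $a$ by applying associativity to the $(n+2)$-set obtained by adjoining $a$ (the paper's case (b), via Remark~\ref{Q_solutions}). Your packaging through a single global section $\hat{s}$ with a defect-vanishing lemma is only a cosmetic reorganization of the paper's decomposition of the map on $P_n$ into $\varphi_a \cup \varphi_b$, and the uniqueness arguments coincide.
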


\begin{proof}
First, we fix a linear ordering $<$ over $I$ such that $\mathcal{S} = \Star_n(I, <)$ and $a$ is the $<$-minimal element of $I$ (in fact, for $n \geq 3$, this ordering is uniquely determined by the set $\mathcal{S}$). This transfers via $\varphi$ to a linear ordering over $I^*$ with $a^*$ as the minimal element such that $\mathcal{S}^* = \Star_n(I^*, <)$, and we will use the same symbol $<$ for both orderings without risk of confusion.

Form the partition $I^{(n)} = I_a \cup I_b$ where $I_a$ consists of all tuples $I^{(n)}$ which contain the element $a$ and $I_b := \left( I \setminus \{a\} \right)^{(n)}$. Let $P = P_a \cup P_b$ be the corresponding partition of $P = P_n$, and we define sets $I^*_a, I^*_b, P^*_a,$ and $P^*_b$ in $\mathcal{H}_2$ analogously (replacing $a$ everywhere by $a^*$).

The first step is to define a function $\varphi_a : P_a \rightarrow P^*_a$ using the inverse maps. Given any tuple $(c_1, \ldots, c_n) \in I_a$, first select the unique $(b_1, \ldots, b_n) \in \mathcal{S}$ and the unique $\sigma \in S_n$ such that $$(c_1, \ldots, c_n) = (b_{\sigma(1)}, \ldots, b_{\sigma(n)}).$$ Then for any $f \in P(c_1, \ldots, c_n)$, there is a unique $g \in G$ such that $$f = \iota_\sigma \left[ s(b_1, \ldots, b_n) \right] + g,$$ and we define $$\varphi_a(f) := \iota^*_\sigma \left[ s^*( \varphi(b_1), \ldots, \varphi(b_n)) \right] + \psi(g).$$

Observe that it follows immediately from the definition that $\varphi_a$ respects the action of the group $G$, that is, that $\varphi_a(f + g) = \varphi_a(f) + \psi(g)$ for any $f \in \dom(\varphi_a)$ and any $g \in G$. It is also immediate that $\varphi_a$ satisfies the requirement that $\varphi_a(s(u)) = s^*(\varphi(u))$ for any $u \in \mathcal{S}$, simply by letting $\sigma$ be the identity permutation and by letting $g = 0$.

Now we define a function $\varphi_b : P_b \rightarrow P^*_b$ via $\varphi^a$ and the $Q$-relation: if $f \in P_b$, say $f \in P(c_1, \ldots, c_n)$, then we can pick elements $f_i \in P(a, c_1, \ldots, \widehat{c_i}, \ldots, c_n)$ such that $Q(f, f_1, \ldots, f_n)$ holds, and then define $\varphi_b(f)$ to be the unique element of $P^*(\varphi(c_1), \ldots, \varphi(c_n))$ such that $$Q^*(\varphi_b(f), \varphi_a(f_1), \ldots, \varphi_a(f_n)).$$

The first thing to check is that $\varphi_b$ is well-defined, that is, that the definition of $\varphi_b(f)$ does not depend on the choice of the elements $f_i$. If we also have $Q(f, h_1, \ldots, h_n)$ where $h_i \in P(a, c_1, \ldots, \widehat{c_i}, \ldots, c_n)$, then $h_i = f_i + g_i$ for some $g_i \in G$, and by Proposition~\ref{alt_2}, $$\sum_{i = 1}^n (-1)^i g_i = 0.$$ The alternating sum of the elements $\psi(g_i)$ is also $0$ since $\psi$ is a group isomorphism. Therefore by Proposition~\ref{alt_2} again, $$Q^*(\varphi_b(f), \varphi_a(f_1) + \psi(g_1), \ldots, \varphi_a(f_n) + \psi(g_n)),$$ and by the observation above that $\varphi_a$ commutes with the action of the group $G$, we conclude that $Q^*(\varphi_b(f), \varphi_a(h_1), \ldots, \varphi_a(h_n))$.

We now define $\chi$ as the function from $\mathcal{H}$ to $\mathcal{H}^*$ which acts as $\varphi$ on the $I$ sort, as $\psi$ on the $G$ sort, as $\varphi^a \cup \varphi^b$ on the $P_n$ sort, and commutes with the projection functions $\pi^i$. Evidently this $\chi$ induces bijections between all the sorts which respect the fibers in $P$. We must check that $\chi$ respects the rest of the $L_n$-definable structure: that it commutes with the action of $G$ and the inverse maps $\iota_\sigma$, and that it preserves the $Q$ relation.

\textbf{$\chi$ respects the action of $G$:} Let $f \in P(u)$ and $g \in G$. As already observed above, it follows directly from the definition of $\varphi_a$ that if $u \in P_a$, then $\chi(f + g) = \chi(f) + \chi(g)$. So suppose that $g \in G$ and $f \in P_b$ and $$Q(f, f_1, \ldots, f_n)$$ for elements $f_i \in P_a$. We must check that $\varphi_b(f + g) = \varphi_b(f) + \psi(g)$. But by the definition of $\varphi_b$ and Proposition~\ref{alt_2}, $$Q^*(\varphi_b(f) + \psi(g), \varphi_a(f_1) + \psi(g), \varphi_a(f_2), \ldots, \varphi_a(f_n))$$ holds, and since $\varphi_a$ commutes with the $G$ action, $$Q^*(\varphi_b(f) + \psi(g), \varphi_a(f_1 + g), \varphi_a(f_2), \ldots, \varphi_a(f_n)).$$ By Proposition~\ref{alt_2} again, we also have that $$Q(f + g, f_1 + g, f_2, \ldots, f_n),$$ and so $$Q^*(\varphi_b(f + g), \varphi_a(f_1 + g), \varphi_a(f_2), \ldots, \varphi_a(f_n)).$$ Comparing the last two sentences, we conclude that $\varphi_b(f+ g) = \varphi_b(f) + \psi(g)$.

\textbf{$\chi$ commutes with the inverse maps:} Fix $\sigma \in S_n$ and $f \in P$, and we check that $\varphi(\iota_\sigma(f)) = \iota^*_\sigma(\varphi(f))$. We break into cases depending on whether $f \in P_a$ or $f \in P_b$.

First suppose that $f \in P_a$, and say $f \in (c_1, \ldots, c_n)$ where some $c_i$ is equal to $a_1$. As in the definition of $\varphi_a$ above, we first fix $(b_1, \ldots, b_n) \in \mathcal{S}$ and $\tau \in S_n$ such that $$(c_1, \ldots, c_n) = (b_{\tau(1)}, \ldots, b_{\tau(n)})$$ and we pick $g \in G$ such that $$f = \iota_\tau ( s(b_1, \ldots, b_n) ) + g.$$ We calculate: $$\chi \left( \iota_\sigma(f) \right) = \varphi_a \left( \iota_{\sigma \circ \tau} (s(b_1, \ldots, b_n)) + (-1)^{\textbf{sg}(\sigma)} g \right),$$ which by the definition of $\varphi_a$ is equal to $$\iota^*_{\sigma \circ \tau} \left[s^*(\varphi(b_1), \ldots, \varphi(b_n)) \right] +  (-1)^{\textbf{sg}(\sigma)} \psi(g) $$ $$= \iota^*_{\sigma} \left[\iota^*_\tau (s^*(\varphi(b_1), \ldots, \varphi(b_n))) + \psi(g) \right]$$ $$= \iota^*_\sigma (\varphi_a(f)), $$ which is what we wanted to show.

For the other case, suppose $f \in P_b$ and pick elements $f_j \in P_a$ (for $j \in \{1, \ldots, n\}$) such that $$Q(f, f_1, \ldots, f_n),$$ and so by definition of $\varphi_b$, $$Q^*(\varphi_b(f), \varphi_a(f_1), \ldots, \varphi_a(f_n)).$$ By property~(8) of Proposition~\ref{std_model_Tng} (the preservation of $Q$ under transpositions) applied to both of the $Q$ relations immediately above, we conclude that $$(\dag) \hspace{.1in} Q(\iota_{\sigma_i}(f), \iota_{\sigma_i}(f_1), \ldots, f_{i+1}, f_i, \iota_{\sigma_{i+1}}(f_{i+2}), \ldots, \iota_{\sigma_{i+1}}(f_n))$$ and $$Q^*(\iota^*_{\sigma_i}(\varphi_b(f)), \iota^*_{\sigma_i}(\varphi_a(f_1)), \ldots, \varphi_a(f_{i+1}), \varphi_a(f_i), \iota^*_{\sigma_{i+1}}(\varphi(f_{i+2})), \ldots, \iota^*_{\sigma_{i+1}}(\varphi_a(f_n))).$$ Since we know that $\varphi_a$ commutes with the inverse maps, we derive from the $Q^*$ relation above that $$(\ddag) \hspace{.1in} Q^*(\iota^*_{\sigma_i}(\varphi_b(f)), \varphi_a(\iota_{\sigma_i}(f_1)), \ldots, \varphi_a(f_{i+1}), \varphi_a(f_i), \varphi(\iota_{\sigma_{i+1}}(f_{i+2})), \ldots, \varphi_a(\iota_{\sigma_{i+1}}(f_n))).$$ By $(\dagger)$ and the definition of $\varphi$, we also have that $$(\dag \dag \dag) \hspace{.1in} Q^*(\varphi_b(\iota_{\sigma_i}(f)), \varphi_a(\iota_{\sigma_i}(f_1)), \ldots, \varphi_a(f_{i+1}), \varphi_a(f_i), \varphi_a(\iota_{\sigma_{i+1}}(f_{i+2})), \ldots, \varphi_a(\iota_{\sigma_{i+1}}(f_n))).$$ Comparing $(\ddag)$ with $(\dag \dag \dag)$, we conclude that $\iota^*_{\sigma_i}(\varphi_b(f)) = \varphi_b(\iota_{\sigma_i}(f))$, as desired.

\textbf{$\chi$ respects the $Q$ relation:} Suppose that $h_1, \ldots, h_{n+1}$ are from $P$. It suffices to check that, assuming $Q(h_1, \ldots, h_{n+1})$ holds, then so does $Q^*(\varphi(h_1), \ldots, \varphi(h_{n+1}))$.

Case~(a) is when some $h_i$ belongs to $P_a$. By repeated applications of Property~(8) of Proposition~\ref{std_model_Tng} plus the fact noted above that $\varphi$ commutes with the inverses, it we may assume without loss of generality that $h_1 \in P_a$. But by the very definition of $\varphi_b$ given above, we have that $Q(h_1, h_2, \ldots, h_{n+1})$ implies $Q^*(\varphi_b(h_1), \varphi_a(h_2), \ldots, \varphi_a(h_{n+1}))$, which is what we wanted.

Case~(b) is when \textbf{every} $h_i$ belongs to $P_b$. (This seems to be the only step of the proof which uses the fact that $\mathcal{H}$ and $\mathcal{H}^*$ satisfy the associative law.) First we use the associativity of $\mathcal{H}$ and Remark~\ref{Q_solutions} to pick a collection of elements $\left\{f_{ij} : 1 \leq i < j \leq n+2\right\}$ such that $f_{ij} \in P(a_1, b_2, \ldots, \widehat{b_i}, \ldots, \widehat{b_j}, \ldots, b_{n+2})$, the $Q$ relation holds of every tuple $F_i = (f_{1i}, \ldots, \widehat{f_{ii}}, \ldots, f_{i, n+2})$ (for each $i \in \{1, \ldots, n+2\}$), and $F_1$ is the original tuple $(h_1, \ldots, h_{n+1})$. Then for every $i \in \{2, \ldots, n+2\}$, by case~(a) immediately above, the $Q^*$ relation holds of the tuple $\chi(F_i) := (\chi(f_{1i}), \ldots, \chi(f_{i,n+2}))$. By the associativity of $\mathcal{H}^*$, the $Q^*$ relation also holds of $\chi(F_1)$, that is, $$Q^*(\chi(h_1), \ldots, \chi(h_{n+1})),$$ and we are finished.

Finally, we prove the uniqueness of the isomorphism $\chi: \mathcal{H} \rightarrow \mathcal{H}^*$ satisfying the hypotheses of the Proposition. First note that $\varphi_a$ is the only possible function which maps $P_a$ onto $P^*_a$, commutes with the group isomorphism $\psi: G \rightarrow G^*$ and the inverse maps $\iota_{\sigma}$, and satisfies $\varphi_a(s(u)) = s^*(\varphi(u))$ for every $u \in \mathcal{S}$. Then note that all of $P$ is generated from $P_a$ using the $Q$ relation, so the map $\varphi_a$ has at most one extension to a function on all of $P$ which respects the $Q$ relation. This was all we needed to verify.

\end{proof}

Immediately from Proposition~\ref{isomorphisms_stars}, we obtain the following important corollaries on automorphisms:

\begin{corollary}
\label{automorph_stars}
If $\mathcal{H} = (I, \dots, P, G, Q)$ is any connected $n$-ary polygroupoid with inverses and $S \subseteq I^{(n)}$ is an $n$-star over $I$, then for any function $\varphi : S \rightarrow G$, there is a unique automorphism $\chi \in \Aut(\mathcal{H} / I \cup G)$ such that for every $w \in S$ and every $f \in P(w)$, $$\chi(f) = f + \varphi(w).$$
\end{corollary}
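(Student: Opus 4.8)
The plan is to obtain $\chi$ as a direct application of Proposition~\ref{isomorphisms_stars} in the special case $\mathcal{H}^* = \mathcal{H}$, taking the bijection $I \to I$ to be $\id_I$ and the group isomorphism $G \to G$ to be $\id_G$, and choosing the two solutions so that the second is a ``$\varphi$-twist'' of the first. First I would fix an arbitrary solution $s : S \rightarrow P_n$ on the $n$-star $S$; such a solution exists because connectedness (condition~(1) of Definition~\ref{connected}, applied iteratively) guarantees that every fiber $P_n(w)$ is nonempty. I then define a second function $s^* : S \rightarrow P_n$ by $s^*(w) := s(w) + \varphi(w)$. Since the action of $G$ preserves fibers (property~(1) of Definition~\ref{group_action}, i.e. $\pi^n(g.p) = \pi^n(p)$), we have $s^*(w) \in P_n(w)$, so $s^*$ is again a solution on $S$.

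Next I would invoke Proposition~\ref{isomorphisms_stars} with $\mathcal{H}^* = \mathcal{H}$, the identity bijection $\id_I$ on $I$, the identity isomorphism $\id_G$ on $G$, the same $n$-star $S$ (so $\mathcal{S}^* = S$), and the two solutions $s$ and $s^*$. This produces a unique isomorphism $\chi : \mathcal{H} \rightarrow \mathcal{H}$ extending $\id_I \cup \id_G$; in particular $\chi$ fixes $I \cup G$ pointwise, so $\chi \in \Aut(\mathcal{H} / I \cup G)$, and it satisfies $\chi(s(w)) = s^*(w) = s(w) + \varphi(w)$ for every $w \in S$.

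It remains to upgrade this identity on the distinguished values $s(w)$ to the full formula $\chi(f) = f + \varphi(w)$ for \emph{every} $f \in P(w)$. Here I would use that $G$ acts regularly on $P_n(w)$ (property~(2) of Definition~\ref{group_action}), so any $f \in P(w)$ may be written uniquely as $f = s(w) + g$ for some $g \in G$. Since $\chi$ is an $L_n$-isomorphism fixing $G$ pointwise it commutes with the $G$-action, and since $G$ is abelian (Proposition~\ref{alt_2}) I compute
\[
\chi(f) = \chi(s(w) + g) = \chi(s(w)) + g = \bigl(s(w) + \varphi(w)\bigr) + g = \bigl(s(w) + g\bigr) + \varphi(w) = f + \varphi(w),
\]
where the fourth equality uses commutativity of $G$. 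This gives the existence statement of the Corollary.

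Finally, uniqueness follows immediately from the uniqueness clause of Proposition~\ref{isomorphisms_stars}: any $\chi' \in \Aut(\mathcal{H}/I \cup G)$ with $\chi'(f) = f + \varphi(w)$ for all $w \in S$ and $f \in P(w)$ in particular extends $\id_I \cup \id_G$ and sends $s(w)$ to $s(w) + \varphi(w) = s^*(w)$, hence coincides with the $\chi$ provided by the Proposition. The only non-formal step is the fiberwise computation displayed above, and even that is routine once regularity of the action and the abelianness of $G$ are in hand; I expect no real obstacle beyond this bookkeeping, since all the substantive work has already been carried out in Proposition~\ref{isomorphisms_stars}.
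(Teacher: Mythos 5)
Your proof is correct and follows exactly the route the paper intends: the paper derives this corollary ``immediately'' from Proposition~\ref{isomorphisms_stars}, which is precisely your application with $\mathcal{H}^* = \mathcal{H}$, $\id_I$, $\id_G$, and the twisted solution $s^*(w) = s(w) + \varphi(w)$. Your extra care in upgrading $\chi(s(w)) = s^*(w)$ to $\chi(f) = f + \varphi(w)$ on the whole fiber (via regularity, $G$-equivariance, and abelianness of $G$) just fills in the bookkeeping the paper leaves implicit.
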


\begin{corollary}
\label{indsicernibility_I}
For any connected $n$-ary polygroupoid with inverses $\mathcal{H} = (I, G, \ldots )$, any permutation $\varphi$ of $I$ can be extended to an automorphism of $\mathcal{H}$ which fixes $G$ pointwise.
\end{corollary}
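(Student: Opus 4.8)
The plan is to deduce Corollary~\ref{indsicernibility_I} directly from Proposition~\ref{isomorphisms_stars} by taking the target polygroupoid to be $\mathcal{H}$ itself and the group isomorphism to be the identity. Concretely, I would set $\mathcal{H}^* := \mathcal{H}$, take $\psi := \id_G : G \to G$, and let $\varphi$ be the given permutation of $I$ in the role of the bijection $I \to I^*$. It then remains only to supply the two remaining inputs required by the Proposition: an $n$-star $\mathcal{S}$ over $I$ and a pair of solutions $s,s^*$.

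First I would fix any linear ordering $<$ on $I$ with a minimal element $a$ and set $\mathcal{S} := \Star_n(I,<)$; this is legitimate since $|I| \geq n$. The key observation to record is that $\mathcal{S}^* := \varphi(\mathcal{S})$ is again an $n$-star, centered at $a^* := \varphi(a)$: transporting $<$ along $\varphi$ (that is, declaring $x <' y \iff \varphi^{-1}(x) < \varphi^{-1}(y)$) yields a linear ordering of $I$ in which $\varphi(a)$ is minimal and for which $\mathcal{S}^* = \Star_n(I,<')$. Next I would choose an arbitrary solution $s : \mathcal{S} \rightarrow P_n$ and an arbitrary solution $s^* : \mathcal{S}^* \rightarrow P_n$; these exist because $\mathcal{H}$ is connected, so every fiber $P_n(u)$ with $u \in I^{(n)}$ is nonempty (by Definition~\ref{connected}, iterating condition~(1) gives elements of $P_i(w)$ over every $w \in I^{(i)}$).

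With these data in hand, Proposition~\ref{isomorphisms_stars} produces a (unique) isomorphism $\chi : \mathcal{H} \rightarrow \mathcal{H}$ of $L_n$-structures extending $\varphi \cup \id_G$. Being an isomorphism of $\mathcal{H}$ onto itself, $\chi$ is an automorphism; it restricts to $\varphi$ on the sort $I$ and, because $\psi = \id_G$, it fixes the sort $G$ pointwise. This is exactly the required automorphism, so the corollary follows. There is essentially no obstacle here beyond the bookkeeping just described; the only point meriting a line of verification is that the $\varphi$-image of a star is again a star, which is immediate from transporting the defining linear order as above, and the nonemptiness of the fibers guaranteeing that solutions $s, s^*$ can be chosen.
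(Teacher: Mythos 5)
Your proposal is correct and takes essentially the same route as the paper: the paper derives this corollary immediately from Proposition~\ref{isomorphisms_stars}, exactly as you do, by taking $\mathcal{H}^* = \mathcal{H}$, $\psi = \id_G$, and transporting an $n$-star and a solution along the given permutation $\varphi$. The two verifications you spell out (that $\varphi(\mathcal{S})$ is again an $n$-star, via the transported linear order, and that connectedness makes every fiber $P_n(u)$ nonempty so solutions exist) are precisely the routine details the paper leaves implicit.
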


\begin{lemma}
\label{acl_homesort}
Suppose that $M = (I, P_2, \ldots, P_n, G) \models T_{n,G}$ (with $G$ finite), and let $$M^{home} = I(M) \cup P_2(M) \cup \ldots \cup P_n(M) \cup G(M)$$ (that is, the union of the ``home sorts'' of $M$). Then for any $X \subseteq M^{home}$, $$\acl^{eq}(X) \cap M^{home} = \cl(X)$$ in the sense of Definition~\ref{support}.
\end{lemma}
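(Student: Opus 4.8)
The plan is to pass to a monster model $\C \models T_{n,G}$ and use the standard criterion that for $X \subseteq M^{home}$ an element $b$ lies in $\acl^{eq}(X)$ precisely when its orbit under $\Aut(\C/X)$ is finite; since algebraic closure is absolute and $\cl(X)$ depends only on $\supp(X)$ and the (singleton or finite) fibres above it, it is harmless to compute in $\C$ rather than $M$. The home sort of $\C$ is a connected $n$-ary polygroupoid with inverses (Definition~\ref{polygroup_with_inverses}), since properties (1)--(8) of Proposition~\ref{std_model_Tng} are first-order in $L_n$, so Proposition~\ref{isomorphisms_stars} and Corollaries~\ref{automorph_stars} and \ref{indsicernibility_I} all apply. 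As $G$ is finite we have $G \subseteq \acl^{eq}(\emptyset)$, so $\acl^{eq}(X) \cap M^{home} \supseteq G$ for every $X$; accordingly I will read the group sort into the closure and prove $\acl^{eq}(X) \cap M^{home} = \cl(X) \cup G$. Throughout write $S := \supp(X)$.

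The inclusion $\cl(X) \cup G \subseteq \acl^{eq}(X)$ is routine. The projection maps $\pi^k_i$ are definable, so $S \subseteq \dcl(X)$; for $w \in S^{(i)}$ with $i < n$ the fibre $P_i(w)$ is a singleton by Proposition~\ref{std_model_Tng}(2) and hence lies in $\dcl(S)$; for $w \in S^{(n)}$ the fibre $P_n(w)$ is finite, whence $P_n(w) \subseteq \acl(S) \subseteq \acl(X)$; and $G \subseteq \acl^{eq}(\emptyset)$.

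For the reverse inclusion it suffices, since $\Aut(\C/\cl(X) \cup G) \subseteq \Aut(\C/X)$, to show that every $b \in M^{home} \setminus (\cl(X) \cup G)$ already has infinite orbit under $\Aut(\C/\cl(X) \cup G)$. Such a $b$ satisfies $\supp(b) \not\subseteq S$: otherwise, if $b \in P_i$ with $i < n$ it would be the unique point over a tuple in $S^{(i)}$, and if $b \in P_n$ it would lie in some $P_n(w)$ with $w \in S^{(n)}$, in either case landing in $\cl(X)$. Fix $a \in \supp(b) \setminus S$. The idea is to produce, for each of the infinitely many $a' \in I \setminus (S \cup \supp(b))$, an automorphism $\chi_{a'} \in \Aut(\C/\cl(X) \cup G)$ with $\chi_{a'}(a) = a'$: then $\chi_{a'}(b)$ has support $(\supp(b) \setminus \{a\}) \cup \{a'\}$, and distinct $a'$ give distinct supports, hence distinct images, so the orbit of $b$ is infinite.

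To build $\chi_{a'}$, fix a linear order $<$ on $I$ in which $S$ is an initial segment, so that $\Star_n(S,<) = \Star_n(I,<) \cap S^{(n)}$; put $\mathcal{S} := \Star_n(I,<)$ and fix a solution $s$ on $\mathcal{S}$. Let $\varphi$ be the transposition $(a\ a')$. Since $a,a' \notin S$, the map $\varphi$ fixes $S$ pointwise and $\mathcal{S}^* := \varphi(\mathcal{S})$ is again an $n$-star containing $\Star_n(S,<)$. Apply Proposition~\ref{isomorphisms_stars} with $\mathcal{H}^* = \mathcal{H}$, $\psi = \id_G$, the star $\mathcal{S}$, and a solution $s^*$ on $\mathcal{S}^*$ chosen so that $s^* \restriction \Star_n(S,<) = s \restriction \Star_n(S,<)$; this produces $\chi_{a'}$ with $\chi_{a'} \restriction I = \varphi$ and $\chi_{a'} \restriction G = \id$. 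Because $\varphi(S) = S$, the automorphism $\chi_{a'}$ maps the sub-polygroupoid $\cl(X)$ onto itself, and its restriction there is an automorphism of $\cl(X)$ fixing $S$ and $G$ pointwise and fixing each value $s(u)$ for $u \in \Star_n(S,<)$ (indeed $\chi_{a'}(s(u)) = s^*(\varphi(u)) = s^*(u) = s(u)$). When $|S| \geq n$, the uniqueness clause of Proposition~\ref{isomorphisms_stars}, applied \emph{inside} $\cl(X)$ with reference solution $s \restriction \Star_n(S,<)$, forces this restriction to be the identity; when $|S| < n$ there are no top fibres over $S$ and the conclusion that $\chi_{a'}$ fixes $\cl(X)$ is immediate. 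Thus $\chi_{a'}$ fixes $\cl(X) \cup G$ pointwise while sending $a$ to $a'$, completing the argument.

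The one genuinely delicate point is this last step: arranging a \emph{single} automorphism that simultaneously fixes every point of every top fibre $P_n(w)$ with $w \in S^{(n)}$ and yet moves $a$. Ordering $I$ so that $S$ is an initial segment is exactly what lets the uniqueness half of Proposition~\ref{isomorphisms_stars}, applied to the sub-polygroupoid $\cl(X)$, upgrade ``fixes a solution on $\Star_n(S,<)$'' to ``fixes $\cl(X)$ pointwise''; the remaining verifications (that $\mathcal{S}^*$ is a star, that the supports of the $\chi_{a'}(b)$ are distinct, and the easy inclusion) are bookkeeping.
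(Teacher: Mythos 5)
Your proof is correct, but it takes a genuinely more labor-intensive route than the paper's. The paper's argument is a short two-step reduction: since the projection maps are definable, $\supp(b) \subseteq \dcl^{eq}(b)$ for every $b$ in the home sorts, and since all fibers are finite, $\acl^{eq}(X) = \acl^{eq}(\supp(X))$; hence if some $b \notin \cl(X)$ lay in $\acl^{eq}(X)$, then some element $a \in I \setminus \supp(X)$ would lie in $\acl^{eq}(\supp(X))$, which is impossible because by Corollary~\ref{indsicernibility_I} any permutation of $I$ fixing $\supp(X)$ pointwise extends to an automorphism of $\mathcal{H}$ fixing $G$ pointwise, giving $a$ an infinite orbit. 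Pushing the problem down into the sort $I$ in this way means one only ever needs automorphisms fixing $\supp(X) \cup G$ pointwise, not all of $\cl(X)$, so the delicate point you correctly identify (fixing every element of every top fiber over $\supp(X)$ while moving a point of $I$) never arises. Your approach instead proves the stronger homogeneity statement that the pointwise stabilizer of $\cl(X) \cup G$ still has infinite orbits outside $\cl(X) \cup G$, and your mechanism for it --- ordering $I$ so that $\supp(X)$ is an initial segment and invoking the uniqueness clause of Proposition~\ref{isomorphisms_stars} inside the substructure $\cl(X)$ --- is sound, granted one routine verification you leave implicit: that $\cl(X)$ is itself a \emph{connected} $n$-ary polygroupoid with inverses (this holds because horn-fillers and $\pi$-preimages computed in $\mathfrak{C}$ have support contained in $\supp(X)$ and so lie in $\cl(X)$). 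Two further points in your favor: your explicit passage to the monster model is in fact needed by both arguments (inside $M$ itself, $\supp(X)$ could be cofinite in $I(M)$, leaving no room to move points of $I$), and your restatement of the conclusion as $\acl^{eq}(X) \cap M^{home} = \cl(X) \cup G$ repairs a small imprecision in the lemma as stated, since $G \subseteq \acl^{eq}(\emptyset)$ but $G \not\subseteq \cl(X)$ under Definition~\ref{support}.
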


\begin{proof}
It is immediate that every element of $\cl(X)$ is in $\acl^{eq}(X)$ (this uses the finiteness of $G$). For the other direction, of $\acl^{eq}(X) \cap M^{home}$ contained any element outside of $\cl(X)$, then it would necessarily contain some element $a \in I \setminus (X \cap I)$. But by Corollary~\ref{indsicernibility_I} and the infinitude of $I$, no such element $a$ can belong to $\acl^{eq}(X\cap I) = \acl^{eq}(X)$.
\end{proof}

\begin{corollary}
\label{automorphisms_polygroupoids}

If $\mathcal{H} = (I, P, G, \ldots)$ is any connected $n$-ary polygroupoid with inverses, then $\Gamma = \Aut(\mathcal{H})$ has the following normal series:

$$\Gamma \rhd \Gamma_1 \rhd \Gamma_2,$$

where:

\begin{enumerate}

\item $\Gamma_1 = \Aut(\mathcal{H} / I)$;
\item $\Gamma_2 = \Aut(\mathcal{H} / I \cup G) \cong \prod_{\mathcal{S}} G$, the direct product of infinitely many copies of $G$ indexed by some $n$-star $\mathcal{S}$ over $I$;
\item $\Gamma_2 \lhd \Gamma$;
\item $\Gamma_1 / \Gamma_2 \cong \Aut(G)$; and
\item $\Gamma / \Gamma_2 \cong \textup{Sym}(I)$.
\end{enumerate}
\end{corollary}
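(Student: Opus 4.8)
The plan is to realize each term of the series as the kernel of a natural restriction homomorphism; this yields the normality assertions and the quotient computations at once, after which the identification of $\Gamma_2$ with a power of $G$ falls out of Corollary~\ref{automorph_stars}. Since $I$ and $G$ are sorts, every $\chi\in\Gamma$ restricts to a permutation $\chi\restriction I$ of $I$ and, as it preserves the group operation symbol on $G$, to a group automorphism $\chi\restriction G\in\Aut(G)$. This gives restriction homomorphisms $\rho_I:\Gamma\to\textup{Sym}(I)$, $\rho_G:\Gamma\to\Aut(G)$, and the combined map $\rho=(\rho_I,\rho_G):\Gamma\to\textup{Sym}(I)\times\Aut(G)$. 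By the very definitions, $\ker\rho_I=\Aut(\mathcal{H}/I)=\Gamma_1$ and $\ker\rho=\Aut(\mathcal{H}/I\cup G)=\Gamma_2$, which establishes (1), exhibits both $\Gamma_1$ and $\Gamma_2$ as normal subgroups of $\Gamma$ (so $\Gamma\rhd\Gamma_1$ and (3) hold), and, restricting $\rho_G$ to $\Gamma_1$, exhibits $\Gamma_2=\ker(\rho_G\restriction\Gamma_1)$ as normal in $\Gamma_1$ (so $\Gamma_1\rhd\Gamma_2$).

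For the quotients I would prove surjectivity of the relevant maps. Corollary~\ref{indsicernibility_I} says every permutation of $I$ extends to an automorphism of $\mathcal{H}$, so $\rho_I$ is onto and $\Gamma/\Gamma_1\cong\textup{Sym}(I)$ (this is the content of item (5); note that the kernel of the restriction-to-$I$ map is $\Gamma_1$). To see that $\rho_G\restriction\Gamma_1$ is onto, given $\psi\in\Aut(G)$ I would invoke Proposition~\ref{isomorphisms_stars} with $\mathcal{H}^\ast=\mathcal{H}$, $\varphi=\id_I$, the group isomorphism $\psi$, a fixed $n$-star $\mathcal{S}$, and the same solution $s^\ast=s$ on $\mathcal{S}$: the resulting $\chi$ fixes $I$ pointwise, satisfies $\chi\restriction G=\psi$, and fixes the chosen solution, so $\chi\in\Gamma_1$ with $\rho_G(\chi)=\psi$. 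Hence $\Gamma_1/\Gamma_2\cong\Aut(G)$, which is (4).

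It remains to prove (2). Fix an $n$-star $\mathcal{S}$ over $I$ and define $\Theta:\Gamma_2\to\prod_{\mathcal{S}}G$ as follows. For $\chi\in\Gamma_2$ and $w\in\mathcal{S}$, every $f\in P(w)$ satisfies $\pi^n(\chi(f))=\chi(\pi^n(f))=w$, so $\chi(f)\in P(w)$; since $G$ acts regularly on $P(w)$ there is a unique $g_w(\chi)\in G$ with $\chi(f)=f+g_w(\chi)$, and as $\chi$ fixes $G$ pointwise and commutes with the action we have $\chi(f+g)=\chi(f)+g$, so $g_w(\chi)$ does not depend on the choice of $f$. Put $\Theta(\chi)=(g_w(\chi))_{w\in\mathcal{S}}$. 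The computation $\chi_1\chi_2(f)=\chi_1(f+g_w(\chi_2))=f+g_w(\chi_1)+g_w(\chi_2)$, using that $G$ is abelian (Proposition~\ref{alt_2}), shows $\Theta$ is a homomorphism, and Corollary~\ref{automorph_stars} furnishes its inverse: every tuple $(g_w)_{w\in\mathcal{S}}$ is realized by a unique automorphism in $\Aut(\mathcal{H}/I\cup G)$, so $\Theta$ is bijective. Thus $\Gamma_2\cong\prod_{\mathcal{S}}G$.

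All steps are short given the earlier corollaries; the one requiring care, and the point I regard as the main obstacle, is item (2): checking that $g_w(\chi)$ is well defined independently of the fiber element and that $\chi$ is pinned down by the tuple $(g_w(\chi))_{w\in\mathcal{S}}$. Both reduce to the uniqueness half of Proposition~\ref{isomorphisms_stars} (an isomorphism is determined by its action on $I$, on $G$, and on a single solution over an $n$-star) together with regularity of the action, so no genuinely new argument is needed.
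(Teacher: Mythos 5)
Your proof is correct, and it follows the route the paper intends: the paper states this corollary with no proof at all, leaving it as an immediate consequence of Proposition~\ref{isomorphisms_stars}, Corollary~\ref{automorph_stars}, and Corollary~\ref{indsicernibility_I}, and your restriction-homomorphism argument (kernels give normality, the extension results give surjectivity of the restrictions, and Corollary~\ref{automorph_stars} identifies $\Gamma_2$ with $\prod_{\mathcal{S}}G$) is exactly how those ingredients are meant to be assembled. One point you handled deserves to be made explicit: item (5) as literally printed, $\Gamma/\Gamma_2 \cong \textup{Sym}(I)$, is false whenever $\Aut(G)$ is nontrivial. Indeed, your own maps show that the combined restriction $\rho = (\rho_I,\rho_G) : \Gamma \rightarrow \textup{Sym}(I) \times \Aut(G)$ is surjective (apply Proposition~\ref{isomorphisms_stars} with an arbitrary pair $(\sigma,\psi)$, a star $\mathcal{S}$, and any solutions on $\mathcal{S}$ and $\sigma(\mathcal{S})$), so in fact $\Gamma/\Gamma_2 \cong \textup{Sym}(I)\times\Aut(G)$; the statement that is true, and evidently intended, is $\Gamma/\Gamma_1 \cong \textup{Sym}(I)$, which is what you prove. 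A minor notational quibble: for $\chi \in \Gamma_2$ and $f \in P(w)$ you write $\pi^n(\chi(f)) = \chi(\pi^n(f)) = w$, conflating $\pi^n$ (valued in $(P_{n-1})^n$) with the support map $\pi$ (valued in $I^{(n)}$); since the intermediate fibers of a connected polygroupoid with inverses are singletons (property (2) of Proposition~\ref{std_model_Tng}), an automorphism fixing $I$ pointwise fixes each $P_k$, $k<n$, pointwise, so the conclusion $\chi(f)\in P(w)$, and with it your definition of $\Theta$, stands.
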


\begin{theorem}
\label{total_cat_qe}
If we define $T_{G,n}$ to be the complete $L_n$-theory of the structure $\mathcal{H}_{G,n}$ above (for $n \geq 2$ and $G$ some finite nontrivial abelian group), then:

\begin{enumerate}
\item $T_{G,n}$ is completely axiomatized by a set of sentences asserting the following:
\begin{enumerate}
\item $\mathcal{H}_{G,n}$ is a connected $n$-ary polygroupoid with inverses (that is, it satisfies properties (1) through (8) of Proposition~\ref{std_model_Tng});
\item $I$ is infinite; and
\item A list of axioms for the group structure $(G, +)$ (which characterizes $G$ up to isomorphism, since $G$ is finite).
\end{enumerate}

\item $T_{G,n}$ is totally categorical.
\item $T_{G,n}$ has quantifier elimination after adding constants for each element of $G$.
\end{enumerate}

\end{theorem}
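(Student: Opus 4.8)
The plan is to derive all three parts from the rigidity result Proposition~\ref{isomorphisms_stars}, starting with the categoricity that underlies both (1) and (2). Any model $M$ of the proposed axiom set is by fiat a connected $n$-ary polygroupoid with inverses (Definition~\ref{polygroup_with_inverses}) whose home sort $I$ is infinite and whose group sort is isomorphic to the fixed $G$. Property~(2) of Proposition~\ref{std_model_Tng}, which is among the axioms, forces each intermediate fiber $P_k(u)$ (for $2\le k\le n-1$) to be a singleton; connectedness forces every fiber to be nonempty; and the regular action gives $|P_n(u)|=|G|$. Hence $|M|=|I|$ whenever $I$ is infinite. Given two such models $M$ and $M^*$ of the same infinite cardinality $\kappa$, I would choose a bijection $\varphi\colon I\to I^*$, the group isomorphism $\psi\colon G\to G^*$, an $n$-star $\mathcal{S}$ over $I$ together with its image $\mathcal{S}^*=\varphi(\mathcal{S})$, and arbitrary solutions $s,s^*$ (which exist since the fibers are nonempty), and then apply Proposition~\ref{isomorphisms_stars} to obtain an isomorphism $M\cong M^*$. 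This shows the axiom set is $\kappa$-categorical for every infinite $\kappa$.

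Since the axiom set has only infinite models and is $\aleph_0$-categorical in the countable language $L_n$, the {\L}o\'s--Vaught test shows it is complete; and because $\mathcal{H}_{G,n}$ satisfies it, it axiomatizes $T_{G,n}$, which is part~(1). Part~(2), total categoricity, is then immediate, since the models of $T_{G,n}$ are exactly the models of the axiom set and we have already proved $\kappa$-categoricity in every infinite $\kappa$ (including $\aleph_0$).

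For the quantifier elimination in part~(3), I would first add a constant for each element of $G$ and then use the standard criterion that, in an $\omega$-categorical theory, QE is equivalent to the statement that every isomorphism $\theta\colon A\to B$ between finitely generated substructures of the monster model $\mathfrak{C}$ extends to an automorphism of $\mathfrak{C}$ fixing $G$. A finitely generated substructure $A$ has finite support $J=\supp(A)\subseteq I$; since $G$ is now named and its action is regular, $A$ contains, over each tuple in some set $U\subseteq J^{(n)}$, a \emph{full} fiber $P_n(u)$, and the lower fibers present in $A$ are singletons determined by $J$. After harmlessly enlarging $J$ (using that $I$ is an indiscernible set, so any bijection of finite subsets of $I$ is partial elementary) I may assume $|J|\ge n$ and in fact replace $A,B$ by the full finite sub-polygroupoids $\cl(J),\cl(J')$, to which $\theta$ extends by the finite case of Proposition~\ref{isomorphisms_stars}. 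I would then extend $\theta\restriction I\colon J\to J'$ to a bijection of $I$ and lift it, via Corollary~\ref{indsicernibility_I}, to an automorphism $\chi_0$ of $\mathfrak{C}$ fixing $G$; since $\theta$ and $\chi_0$ agree on the $I$-sort and both are $G$-equivariant and commute with the $\pi$ maps, $\chi_0$ maps $\cl(J)$ onto $\cl(J')$, so $\eta:=\theta\circ(\chi_0\restriction\cl(J))^{-1}$ is an automorphism of $\cl(J')$ fixing both $I$ and $G$ pointwise, hence acting on each fiber $P_n(w)$ by translation by some $g_w\in G$. Choosing a star $S_0$ over $J'$, the family $(g_w)_{w\in S_0}$ is a function $S_0\to G$, which I extend (say by $0$) to a star $S$ over all of $I$ and feed into Corollary~\ref{automorph_stars} to produce $\tilde\eta$ in the group $\Gamma_2=\Aut(\mathfrak{C}/I\cup G)$ of Corollary~\ref{automorphisms_polygroupoids}; uniqueness in Proposition~\ref{isomorphisms_stars} gives $\tilde\eta\restriction\cl(J')=\eta$, so $\chi:=\tilde\eta\circ\chi_0$ is the desired automorphism extending $\theta$.

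The main obstacle is the last step: knowing that the fiber-translations $(g_w)$ determined by $\eta$ are realized by a single star-function, so that Corollary~\ref{automorph_stars} applies and the resulting $\tilde\eta$ genuinely restricts to $\eta$ rather than merely agreeing with it on the star. This amounts to checking that the $g_w$ satisfy the compatibility (``cocycle'', $\partial^2=0$-type) constraints imposed by the $\iota_\sigma$-equivariance and the $Q$-relation; these hold automatically because $\eta$ is an honest automorphism of the substructure $\cl(J')$, but the cleanest way to make it rigorous is precisely the reduction to the full closures $\cl(J),\cl(J')$ carried out above, which guarantees that the chosen star $S_0$ meets every fiber relevant to $\eta$ and that the data on $S_0$ determines $\eta$ on all of $\cl(J')$. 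Everything else in the argument is routine bookkeeping about supports, fibers, and the action of $G$.
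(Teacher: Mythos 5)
Your treatment of parts (1) and (2) is correct, and it is actually more direct than the paper's: where the paper first extracts a one-point extension statement (Claim~\ref{back_and_forth}) from Proposition~\ref{isomorphisms_stars} and then runs a back-and-forth, you apply Proposition~\ref{isomorphisms_stars} once, globally, to two models of the same infinite cardinality (your cardinality count $|M|=|I(M)|$ is right, since the axioms force singleton intermediate fibers and $P_n$-fibers of size $|G|$) and finish with {\L}o\'s--Vaught. Both arguments rest on the same rigidity proposition; yours is shorter, while the paper's organization is designed so that the one-point extension claim can be re-quoted for part (3).

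Part (3) is where your proposal has a genuine gap, at the step ``replace $A,B$ by the full finite sub-polygroupoids $\cl(J),\cl(J')$, to which $\theta$ extends by the finite case of Proposition~\ref{isomorphisms_stars}.'' That proposition only lets you prescribe the isomorphism on a single $n$-star; the map it returns has no reason to agree with $\theta$ on fibers of $A$ over tuples not containing the star's center, because inside the substructure $A$ there need be no $Q$-instances linking those fibers to the star. Your final paragraph concedes exactly this and then asserts the needed compatibility ``holds automatically because $\eta$ is an honest automorphism of the substructure $\cl(J')$'' --- but that presupposes the reduction to full closures, which is the very thing being proved; the argument is circular at its only nontrivial point. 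Moreover the missing step is not just unproved, it is false. Take $n=2$, $g\in G\setminus\{0\}$, distinct $1,2,3,4\in I$, and $f_{12}\in P(1,2)$, $f_{23}\in P(2,3)$, $f_{34}\in P(3,4)$, $f_{41}\in P(4,1)$. The substructure $A$ generated by these elements (together with the named constants from $G$) consists of $G$, the four points of $I$, and the eight fibers over the edges of the $4$-cycle; it contains no compatible triple, hence no instance of $Q$. The map fixing $G$, $I\cap A$, $f_{23},f_{34},f_{41}$ and sending $f_{12}\mapsto f_{12}+g$ (extended $G$- and $\iota$-equivariantly) is therefore an isomorphism of $A$ onto itself. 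But it extends to no automorphism of $\mathfrak{C}$ fixing $G$: working in the standard model, every automorphism fixing $G$ pointwise has the form $((a,b),h)\mapsto((\rho a,\rho b),\,h+u(b)-u(a))$ for a permutation $\rho$ of $I$ and a function $u:I\to G$ (commutation with the $G$-action forces fiberwise translations, $\iota$-equivariance forces antisymmetry, and $Q$-preservation via Proposition~\ref{alt_2}(3) forces the cocycle identity, whence the coboundary form), and any such map preserves the sum of the translations around the directed cycle $1\to2\to3\to4\to1$, which your extension would have to change from $0$ to $g$.

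This has two consequences worth recording. First, since your ultrahomogeneity criterion is a correct equivalent of quantifier elimination for this $\omega$-categorical theory, the example shows that the two $4$-tuples $(f_{12},f_{23},f_{34},f_{41})$ and $(f_{12}+g,f_{23},f_{34},f_{41})$ have equal quantifier-free types (all $L_n(G)$-terms in these variables stay inside the eight cycle fibers, which support no $Q$-atom) but distinct types; so statement (3), as literally formulated in $L_n$ plus constants for $G$, fails --- the root cause being that $Q$ is a relation rather than a (partial) composition function, so the automorphism-invariant ``holonomy'' of a cycle is not term-definable. Second, the same example exposes the gap in the paper's own one-line proof of (3): the ``standard argument'' for QE needs the extension property for arbitrary finitely generated substructures, whereas Claim~\ref{back_and_forth} only extends isomorphisms between full closures $\cl(A)$, and the substructure $A$ above is not of that form. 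So your instinct that this step was ``the main obstacle'' was exactly right; it cannot be patched by Proposition~\ref{isomorphisms_stars}, and analogous configurations (sign-equivariant $G$-valued assignments on the facets of a triangulated $(n-1)$-sphere, e.g.\ the boundary of the $n$-dimensional cross-polytope, which admit no extension to a global cocycle) produce the same obstruction for every $n\geq 2$.
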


\begin{proof}
We prove all three parts of the theorem simultaneously by a back-and-forth argument using partial $L_n$-isomorphisms determined by $n$-stars. The key point is the following:

\begin{claim}
\label{back_and_forth}
Suppose that $\mathcal{H} = (I, P_2, \ldots, G)$ and $\mathcal{H}^* = (I^*, P^*_2, \ldots, G^*)$ are two connected $n$-ary polygroupoids with inverses with a group isomorphism $\psi: G \rightarrow G^*$. Then given any two sets $A \subseteq I$ and $A^* \subseteq I^*$, any two elements $b \in I \setminus A$ and $b^* \in I^* \setminus A^*$, and any isomorphism of $L_n$-structures $\chi : \cl(A) \rightarrow \cl(A^*)$ extending $\psi$, there is an extension $\tilde{\chi} \supseteq \chi$ to an isomorphism of $L_n$-structures such that $\tilde{\chi}: \cl(A \cup \{b\}) \rightarrow \cl(A^* \cup \{b^*\})$.

\end{claim}

To prove Claim~\ref{back_and_forth}, first pick any $a \in I$ and any $n$-star $\mathcal{S} \subseteq I^{(n)}$ centered at $a$. Let $a^* = \chi(a)$ and let $\mathcal{S}^*$ be the image of $\mathcal{S}$ under $\chi$ (which is an $n$-star centered at $a^*$). Finally, pick an arbitrary function $s: \mathcal{S} \rightarrow P_n$ such that $s(u) \in P_n(u)$ for every $u \in \mathcal{S}$ and let $s^*: \mathcal{S}^* \rightarrow P^*_n$ be the function induced by $\chi$ and $s$, that is, so that $s^*(\chi(u)) = \chi(s(u))$ for every $u \in \mathcal{S}$. Pick any $n$-star $\tilde{\mathcal{S}} \supseteq \mathcal{S}$ (also centered at $a$) and any extension $\tilde{s} \supseteq s$ to a selection function $\tilde{s} : \tilde{\mathcal{S}} \rightarrow P_n$, and let $\tilde{\mathcal{S}}^* \subseteq (I^*)^{(n)}$ and $\tilde{s}^*: \tilde{\mathcal{S}}^* \rightarrow P^*_n$ be induced by $\chi$ as before. Now we define the bijection $\varphi : (A \cup \{b\}) \rightarrow (A^* \cup \{b^*\})$ as the restriction of $\chi$ to $A$ extended so that $\varphi(b) = b^*$, and we apply Proposition~\ref{isomorphisms_stars} using this $\varphi$, the group isomorphism $\psi: G \rightarrow G^*$, and the solutions $\tilde{s}$ and $\tilde{s}^*$; this yields an isomorphism $\tilde{\chi} : \cl(A \cup \{b\}) \rightarrow \cl(A^* \cup \{b^*\})$ of $L_n$-structures. The fact that $\tilde{\chi}$ extends $\chi$ follows from the uniqueness clause of Proposition~\ref{isomorphisms_stars} applied to $\tilde{\chi} \upharpoonright \cl(A)$ and $\chi$.

Given Claim~\ref{back_and_forth}, we now consider any two connected $n$-ary polygroupoids with inverses $\mathcal{H} = (I, P_2, \ldots, G)$ and $\mathcal{H}^* = (I^*, P^*_2, \ldots, G^*)$ such that $|I| = |I^*| \geq n$ and $G \cong G^*$. First note that for any $n$-element sets $A \subseteq I$ and $A^* \subseteq I^*$, there is an isomorphism $\chi: \cl(A) \rightarrow \cl(A^*)$ of $L_n$-structures which extends $\psi: G \rightarrow G^*$ (this is straightforward, but since $\cl(A)$ includes fibers $P_n(u)$ for all $n!$ permutations of $A$, we do need properties~(6) and (7) of Propostion~\ref{std_model_Tng} to ensure the existence of $\chi$). With this as a base case, we can now use Claim~\ref{back_and_forth} as the induction step of a standard back-and-forth argument to conclude that $\mathcal{H} \cong \mathcal{H}^*$ as $L_n$-structures. This gives us the completeness of the axioms in part~(1) of Theorem~\ref{total_cat_qe} plus the total categoricity of $T_{G,n}$. Finally, the elimination of quantifiers for $T_{G,n}$ in the language $L_n$ plus constants for $G$ also follows immediately from Claim~\ref{back_and_forth} by a standard argument.

\end{proof}

Everywhere below, $G$ is always a finite nontrivial abelian group.

\begin{lemma}
\label{independence}
If $\mathfrak{C}$ a monster model of $T_{G,n}$ and $A, B, C$ substets of $I(\mathfrak{C}) \cup P(\mathfrak{C})$, $$A \ind_B C \Leftrightarrow \left[ \cl(A \cup B) \cap \cl(C \cup B) \right] = \cl(B).$$
\end{lemma}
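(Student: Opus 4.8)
The plan is to derive both directions from the standard non-forking calculus of the stable theory $T_{G,n}$, together with two structural inputs: that $\cl$ agrees with algebraic closure on the home sorts (Lemma~\ref{acl_homesort}), and that $I$ is a strongly minimal set carrying a trivial (degenerate) pregeometry. The first move, useful for both directions, is a reduction to closed sets. Since $\cl(X)=\acl^{eq}(X)\cap\{\text{home sorts}\}$ and $X\subseteq\cl(X)\subseteq\acl^{eq}(X)$, we get $\acl^{eq}(AB)=\acl^{eq}(\cl(AB))$ and similarly for $B$ and $CB$; combined with the invariance of non-forking under algebraic closure this yields
$$A \ind_B C \Leftrightarrow \cl(AB) \ind_{\cl(B)} \cl(CB).$$
Thus, writing $A_1=\cl(AB)$, $B_1=\cl(B)$, $C_1=\cl(CB)$, it suffices to prove the statement for the closed sets $A_1\supseteq B_1\subseteq C_1$, whose right-hand side now reads $A_1\cap C_1=B_1$.

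For the forward direction I would argue purely from stability. Assuming $A\ind_B C$, invariance under algebraic closure gives $\acl^{eq}(AB)\ind_{\acl^{eq}(B)}\acl^{eq}(CB)$. Any $d\in\cl(AB)\cap\cl(CB)$ then lies in both $\acl^{eq}(AB)$ and $\acl^{eq}(CB)$, so by monotonicity $d\ind_{\acl^{eq}(B)}d$; since an element independent from itself over a set is algebraic over that set, $d\in\acl^{eq}(B)$. As $d$ also lies in the home sorts, Lemma~\ref{acl_homesort} gives $d\in\cl(B)$. The reverse containment $\cl(B)\subseteq\cl(AB)\cap\cl(CB)$ is just monotonicity of $\cl$, so $\cl(AB)\cap\cl(CB)=\cl(B)$.

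The reverse direction carries the real content. Work with the closed sets $A,B,C$, where $B=A\cap C$, and use finite character of forking: it is enough to show $\bar a\ind_B C$ for every finite tuple $\bar a$ from $A$. By local finiteness (finiteness of the fibers, as $G$ is finite), each such $\bar a$ lies in $\acl(I_0)$ for some finite $I_0\subseteq\supp(A)$, hence in $\acl(\{a_1,\dots,a_m\}\cup B)$ where $a_1,\dots,a_m$ enumerate $I_0\setminus\supp(B)$. These $a_i$ are distinct elements of $I$ lying outside $\supp(C)$: indeed $a_i\in\supp(A)\setminus\supp(B)$, while $\supp(A)\cap\supp(C)=\supp(B)$ because closed sets contain and are determined by their supports and $A\cap C=B$. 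Since $I$ is strongly minimal with trivial geometry (Corollary~\ref{indsicernibility_I} and the infinitude of $I$, as exploited in Lemma~\ref{acl_homesort}), we have $\acl(C)\cap I=\supp(C)$ and $\acl(B)\cap I=\supp(B)$, so $\{a_1,\dots,a_m\}$ is $\acl$-independent over both $B$ and $C$ with $\dim(a_1\dots a_m/B)=m=\dim(a_1\dots a_m/C)$. As forking inside a strongly minimal set is detected by dimension, $a_1\dots a_m\ind_B C$. Finally, from $\bar a\in\acl(\{a_1,\dots,a_m\}\cup B)$ we get $\bar a\ind_{Ba_1\dots a_m}C$, and transitivity gives $\bar a\,a_1\dots a_m\ind_B C$, whence $\bar a\ind_B C$.

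The only place the argument is not purely formal is this last step, where the triviality of the geometry on $I$ is essential: everything hinges on the fact that the finitely many new home-sort generators of $\bar a$ can be taken inside $I$, that they avoid $\supp(C)$, and that avoiding $\supp(C)$ is precisely genericity over $C$. I expect the main obstacle to be the careful bookkeeping that makes this precise, namely verifying (via local finiteness) that a finite tuple from a closed set is algebraic over finitely many elements of its support, and that closed sets are determined by and intersect in their supports. Once these are in hand, the forking-calculus manipulations (monotonicity, transitivity, finite character, and that algebraic elements do not fork) assemble the conclusion routinely.
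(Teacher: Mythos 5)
Your proposal is correct, but for the substantive direction ($\Leftarrow$) it takes a genuinely different route from the paper. The paper's argument is syntactic and very short: by quantifier elimination, the only formulas $\varphi(\ov{x};\ov{c})$ over $C\cup B$ that can divide over $B$ are those forcing $\ov{x}$ into $\cl(C\cup B)\setminus\cl(B)$, and since forking equals dividing in a stable theory, the hypothesis $\cl(A\cup B)\cap\cl(C\cup B)=\cl(B)$ leaves nothing in $\tp(A/BC)$ that could fork over $B$. You instead localize the problem to the sort $I$: reduce to closed sets, use local finiteness to write each finite tuple from $A$ as algebraic over $B$ together with finitely many support points $a_1,\dots,a_m\in I$ avoiding $\supp(C)$, obtain $a_1\dots a_m\ind_B C$ by dimension counting in the strongly minimal set $I$, and finish with left transitivity; the forward direction you spell out is the one the paper calls immediate from Lemma~\ref{acl_homesort}. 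Both arguments are sound; yours trades the paper's direct analysis of dividing formulas for standard forking calculus plus the degenerate pregeometry on $I$, which makes the mechanism of independence more transparent at the cost of more bookkeeping. One citation should be tightened: strong minimality of $I$ (a unique non-algebraic $1$-type over \emph{every} parameter set) does not follow from Corollary~\ref{indsicernibility_I} alone, which only gives indiscernibility of $I$ over $\emptyset$; the clean source is quantifier elimination (Theorem~\ref{total_cat_qe}), from which every definable subset of $I$ is finite or cofinite, while Lemma~\ref{acl_homesort} supplies the triviality of the geometry via $\acl^{eq}(X)\cap I=\supp(X)$. So your route, too, ultimately rests on QE --- the same engine as the paper --- just packaged through the geometry of $I$ rather than through the shape of dividing formulas.
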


\begin{proof}
The direction ``$\Rightarrow$'' is immediate using the characterization of algebraic closure given by Lemma~\ref{acl_homesort}. For the direction ``$\Leftarrow$,'' suppose that $A,B$, and $C$ are subsets of $\mathfrak{C}$. By quantifier elimination, the only formulas $\varphi(\overline{x}; \overline{c})$ over $C$ which divide over $B$ are ones which imply that $\overline{x} \in \cl(C \cup B) \setminus \cl(B)$ (since the atomic $L_n$-formulas over $\overline{c}$ can only express that a variable from $\overline{x}$ is in some finite substructure generated by $\overline{c}$). But since $T_{G,n}$ is stable, forking equals dividing. Therefore if $\left[ \cl(A \cup B) \cap \cl(C \cup B) \right] = \cl(B),$ then $A \ind_B C$.
 \end{proof}



\begin{proposition}
\label{weak_ei}
After naming constants for every element of $G$, $T_{G,n}$ has weak elimination of imaginaries (that is, elimination of imaginaries up to names for finite sets).
\end{proposition}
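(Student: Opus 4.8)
The plan is to verify the standard criterion for weak elimination of imaginaries: a theory (in $T^{eq}$, here with constants named for every element of $G$) has weak EI if and only if for every imaginary $e$ one has $e \in \dcl^{eq}(W_e)$, where $W_e := \acl^{eq}(e) \cap M^{home}$. (The forward direction is immediate; conversely, an enumeration $c$ of $W_e$ is a real finite tuple, its $\Aut(\mathfrak{C}/e)$-orbit is finite since $W_e$ is a finite $\dcl^{eq}(e)$-invariant set, so $c \in \acl^{eq}(e)$ and $e \in \dcl^{eq}(c)$.) So I would fix an arbitrary imaginary $e$ and aim to show $e \in \dcl^{eq}(W_e)$.

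First I would reduce to finite support. By total categoricity of $T_{G,n}$ (Theorem~\ref{total_cat_qe}) the theory is $\omega$-categorical, so $\mathrm{Stab}(e)$ is open; since every element of $\mathfrak{C}^{eq}$ lies in $\dcl^{eq}$ of finitely many elements of $I$ (local finiteness: each $P_i(w)$ is finite and $G$ is finite), $\mathrm{Stab}(e)$ contains $\Aut(\mathfrak{C}/\cl(X))$ for some finite $X \subseteq I$, i.e.\ $e \in \dcl^{eq}(\cl(X))$. By Lemma~\ref{acl_homesort}, $W_e = \acl^{eq}(e) \cap M^{home} \subseteq \acl^{eq}(\cl(X)) \cap M^{home} = \cl(X)$ is finite, and it is $\cl$-closed (again by Lemma~\ref{acl_homesort}, $\cl(W_e) \subseteq \acl^{eq}(W_e)\cap M^{home} \subseteq \acl^{eq}(e)\cap M^{home} = W_e$), so $W_e = \cl(X_0)$ with $X_0 := W_e \cap I$. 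It is then routine that $e$ has finitely many conjugates over $W_e$, i.e.\ $e \in \acl^{eq}(W_e)$, since $\Aut(\mathfrak{C}/\cl(X_0))/\Aut(\mathfrak{C}/\cl(X))$ is finite.

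The heart of the argument is to upgrade $e \in \acl^{eq}(W_e)$ to $e \in \dcl^{eq}(W_e)$, i.e.\ to show that every $\tau \in \Aut(\mathfrak{C}/\,W_e)$ (recall $G$ is named, so $\tau$ fixes $G$) fixes $e$. Here I would use the explicit description of $\Aut(\mathfrak{C}/\cl(X_0))$ from Corollary~\ref{automorphisms_polygroupoids}: modulo the fiber-twisting subgroup $\Gamma_2 \cong \prod_{\mathcal{S}} G$ it is $\mathrm{Sym}(I\setminus X_0)$, so $\tau$ is a product of a permutation of $I \setminus X_0$ (lifted via Corollary~\ref{indsicernibility_I}) and a $G$-valued fiber-twist supported off $X_0$ (Corollary~\ref{automorph_stars}). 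The crux, and the step I expect to be the main obstacle, is the following structural claim: if $e$ is genuinely moved by a permutation displacing a point $a \in I \setminus X_0$, or by a twist acting on the fibers over some star-tuple $w \not\subseteq X_0$, then that point $a$ (respectively, the elements of $w$) must lie in $\acl^{eq}(e)$, contradicting $\acl^{eq}(e) \cap M^{home} = W_e = \cl(X_0)$. I would prove this by letting the relevant datum range over its (infinite) $\Aut(\mathfrak{C}/\cl(X_0))$-orbit: using Lemma~\ref{independence} to move it to fresh, $\cl(X_0)$-independent copies, genuine dependence of $e$ on it would produce infinitely many distinct conjugates of $e$ over $\cl(X_0)$, contradicting $e \in \acl^{eq}(W_e)$; hence the support of any such dependence is algebraic over $e$ and so already lies in $W_e$. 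Quantifier elimination (Theorem~\ref{total_cat_qe}) guarantees that these are the only ways $\tau$ can act, so $\tau(e)=e$, giving $e \in \dcl^{eq}(W_e)$ and hence weak EI.

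Finally I would remark on why only the \emph{weak} form holds: the finite fibers $P_n(w)$ and the $G$-action produce imaginaries — such as unordered configurations of fiber elements over a fixed $w$ — that are coded by finite real \emph{sets} rather than by single real tuples, which is exactly the slack allowed by weak elimination of imaginaries and the reason full elimination of imaginaries cannot be expected.
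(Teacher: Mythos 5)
Your route is genuinely different from the paper's: the paper verifies Poizat's two-part criterion for weak elimination of imaginaries (no infinite descending chain of algebraic closures of finite sets, and $\Aut(\mathfrak{C}/A\cap B)$ is generated by $\Aut(\mathfrak{C}/A)$ and $\Aut(\mathfrak{C}/B)$, the latter proved by splitting the $G$-valued twist function of an automorphism as $t=t_A+t_B$ via the star/solution-function machinery), whereas you work imaginary-by-imaginary with the criterion $e\in\dcl^{eq}\bigl(W_e\bigr)$, $W_e:=\acl^{eq}(e)\cap M^{home}$. That criterion is legitimate, and your reductions are fine: $e\in\dcl^{eq}(\cl(X))$ for some finite $X\subseteq I$, and $W_e=\cl(X_0)$ with $X_0=W_e\cap I$, both follow from Lemma~\ref{acl_homesort} and local finiteness. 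However, there is a genuine gap at the pivotal step.

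The gap is your assertion that $e\in\acl^{eq}(W_e)$ ``since $\Aut(\mathfrak{C}/\cl(X_0))/\Aut(\mathfrak{C}/\cl(X))$ is finite.'' That index is infinite whenever $X\not\subseteq X_0$: any $a\in X\setminus X_0$ satisfies $a\notin\acl^{eq}(\cl(X_0))$ by Lemma~\ref{acl_homesort}, so $a$ has an infinite orbit under $\Aut(\mathfrak{C}/\cl(X_0))$, and distinct images of $a$ give distinct cosets of $\Aut(\mathfrak{C}/\cl(X))$. (If instead $X\subseteq X_0$, there is nothing left to prove, since then $e\in\dcl^{eq}(\cl(X))\subseteq\dcl^{eq}(W_e)$.) So the justification fails exactly in the nontrivial case, and the failure is load-bearing: your ``crux'' step derives its contradiction from producing infinitely many conjugates of $e$ over $\cl(X_0)$, which is only absurd if $e\in\acl^{eq}(W_e)$ is already known --- the argument is circular where the real content lies. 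Note that $e\in\acl^{eq}\bigl(\acl^{eq}(e)\cap M^{home}\bigr)$ is not a general model-theoretic fact (it fails for an equivalence relation with infinitely many infinite classes); in this theory it is essentially the substance of weak elimination of imaginaries. What is missing is an independent proof of it, for instance: take $X\subseteq I$ of minimal cardinality with $e\in\acl^{eq}(\cl(X))$; if some $a\in X$ were not in $\acl^{eq}(e)$, choose $\rho\in\Aut(\mathfrak{C}/e)$ with $a':=\rho(a)\notin X$; then $\cl(X)\ind_{\cl(X\cap\rho(X))}\cl(\rho(X))$ by Lemma~\ref{independence}, and since $e$ is algebraic over both $\cl(X)$ and $\cl(\rho(X))$ we get $e\ind_{\cl(X\cap\rho(X))}e$, hence $e\in\acl^{eq}(\cl(X\cap\rho(X)))$, contradicting minimality because $|X\cap\rho(X)|<|X|$. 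This yields $X\subseteq X_0$ and hence $e\in\acl^{eq}(W_e)$, after which your dcl upgrade can be attempted (though its final appeal to quantifier elimination, and the claim that every $\tau\in\Aut(\mathfrak{C}/\cl(X_0))$ decomposes as a lifted permutation composed with a fiber twist each fixing $\cl(X_0)$, also need to be written out rather than asserted).
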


\begin{proof}
We use the criterion given by Lemma~16.17 of \cite{poizat}, which says that it suffices to check the following two conditions (working in $\mathfrak{C}$ and not $\mathfrak{C}^{eq}$):

\begin{enumerate}
\item There is no strictly decreasing sequence $A_0 \supsetneq A_1 \supsetneq \ldots$, where every $A_i$ is the algebraic closure of a finite set of parameters; and
\item If $A$ and $B$ are algebraic closures of finite sets of parameters in $\mathfrak{C}$, then $\Aut(\mathfrak{C} / A \cap B)$ is generated by $\Aut(\mathfrak{C} / A)$ and $\Aut(\mathfrak{C} / B)$.
\end{enumerate}

Condition (1) is immediate from the characterization of algebraic closure given by Lemma~\ref{acl_homesort} (that each $A_i$ is $\cl(I_i)$ for some $I_i \subseteq I(\mathfrak{C})$). 

For condition (2), suppose that $\sigma \in \Aut(\C / A \cap B)$, and assume that $A = \cl(A_0)$ and $B = \cl(B_0)$ where $A_0, B_0 \subseteq I(\C)$.  By Proposition~\ref{isomorphisms_stars}, any permutation of $I(\C)$ which fixes $A_0$ can be extended to an automorphism of $\Aut(\C / A)$, and likewise for $B_0$ and $B$.  

Using the fact that $\textup{Sym}(I/ A_0 \cap B_0)$ is generated by $\textup{Sym}(I/ A_0)$ and $\textup{Sym}(I/ B_0)$ to find an automorphism $\tau \in \Aut(\C)$ such that $\tau$ is in the subgroup generated by $\Aut(\C / A)$ and $\Aut(\C / B)$ and $\widetilde{\sigma}:=\sigma \circ \tau^{-1}$ fixes $I$ pointwise.

Now we write $\widetilde{\sigma}$ as the composition of two automorphisms $\sigma_A\in \Aut(\C/A)$ and $\sigma_B\in \Aut(\C/B)$. Take an arbitrary point $a\in A_0\cap B_0$ (if the intersection is empty, choose $a\in A_0$). Let $\mathcal{S}$ be an $n$-star at $a$ and let $s:\mathcal{S}\to P_n$ be a solution function. Let $s^*=\widetilde{\sigma}\circ s$. Then $s^*$ is a solution function on the same $n$-star $\mathcal{S}$. Let $t:\mathcal{S}\to G$ be the function uniquely determined by the condition $s^*(a,c_1,\dots,c_{n-1})=s(a,c_1,\dots,c_{n-1})+t(a,c_1,\dots,c_{n-1})$.

By Proposition~\ref{isomorphisms_stars}, $\widetilde{\sigma}$ is completely determined by $s$ and $s^*$ (because it fixes $I$ and $G$). Equivalently, the isomorphism $\widetilde{\sigma}$ is uniquely determined by the function $t$. We now show how to write $t$ as a sum of two functions $t_A$ and $t_B$ that give isomorphisms fixing $A$ and $B$ respectively.

Since $\widetilde{\sigma}$ fixes $A\cap B$ pointwise, we have 
$$
t(a,c_1,\dots,c_{n-1})=0\textrm{ for all } c_1,\dots,c_{n-1}\in A_0\cap B_0. \qquad (*)
$$
Define the functions 
\begin{gather*}
t_A(a,c_1,\dots,c_{n-1})=\begin{cases} 0, & \textrm{if } c_1,\dots,c_{n-1}\in A_0;\\
t(a,c_1,\dots,c_{n-1}), & \textrm{otherwise}; \end{cases}
\\
t_B(a,c_1,\dots,c_{n-1})=\begin{cases} t(a,c_1,\dots,c_{n-1}), & \textrm{if } c_1,\dots,c_{n-1}\in A_0;\\
0, & \textrm{otherwise}; \end{cases}.
\end{gather*}
It is clear that $t=t_A+t_B$. For all $c_1,\dots,c_{n-1}\in B_0$ we have $t_B(a,c_1,\dots,c_{n-1})=0$: if at least one of $c_1,\dots,c_{n-1}$ is not in $A_0$, this follows by the definition; and if $c_1,\dots,c_{n-1}\in A_0\cap B_0$, the function $t_B(a,c_1,\dots,c_{n-1})=t(a,c_1,\dots,c_{n-1})=0$ by $(*)$.

Let $\sigma_A \in \Aut(\C)$ be the automorphism determined by the identity permutation of $I$, identity isomorphism of $G$ and the function $t_A$. Similarly let $\sigma_B \in \Aut(\C)$ be the automorphism determined by the functon $t_B$. Since the function $t_A(a,c_1,\dots,c_{n-1})=0$ for all $c_1,\dots,c_{n-1}\in A_0$ and $t_B(a,c_1,\dots,c_{n-1})=0$ for all $c_1,\dots,c_{n-1}\in B_0$, we have $\sigma_A \in \Aut(\C/A)$ and $\sigma_B \in \Aut(\C/B)$. Finally, since $t_A(a,c_1,\dots,c_{n-1})+t_B(a,c_1,\dots,c_{n-1})=t(a,c_1,\dots,c_{n-1})$ for all $c_1,\dots,c_{n-1}\in I$, we have $\sigma_B\circ \sigma_A =\widetilde{\sigma}$. Finally, we get $\sigma_B \circ \sigma_A \circ \tau = \sigma$.
\end{proof}

\begin{proposition}
 \label{standard_amalg}
 \begin{enumerate}
\item $T_{G,n}$ has $(\leq n)$-uniqueness.
\item $T$ does not have $(n+1)$-uniqueness over $\acl(\emptyset)$.
\item If $A \subseteq \mathfrak{C}$ is any algebraically closed set containing at least one element of $I$, then $T$ has $k$-uniqueness over $A$ for every $k$.
\end{enumerate}
 \end{proposition}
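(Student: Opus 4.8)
The plan is to reduce all three parts to the property $B(k)$ of Definition~\ref{Bn} and then read off the needed boundary computations from the explicit description of algebraic closure (Lemma~\ref{acl_homesort}), of forking (Lemma~\ref{independence}), and of the automorphism group (Corollary~\ref{automorph_stars}). Since $T_{G,n}$ is totally categorical (Theorem~\ref{total_cat_qe}) it is in particular stable, so Fact~\ref{Bn_uniqueness} applies: given $(k-1)$-uniqueness, $k$-uniqueness is equivalent to $B(k)$. As $2$-uniqueness is automatic in any stable theory, parts (1) and (3) become inductive verifications that $B(k)$ holds (for $k\le n$, respectively for \emph{all} $k$ over a base meeting $I$), while part (2) becomes an explicit failure of $B(n+1)$ over $\acl(\emptyset)$, which together with (1) and Fact~\ref{Bn_uniqueness} yields the failure of $(n+1)$-uniqueness.

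The single observation organizing every computation is a dictionary between boundary membership and $G$-valued twists of the top fibers. By Lemma~\ref{acl_homesort} the only elements of $\acl_B(\bar c)$ that can witness a failure of $B(k)$ are $P_n$-fiber elements $f\in P_n(w)$ over $n$-subsets $w$ of the $I$-points involved; and by Corollary~\ref{automorph_stars} such an $f$ lies in a boundary $\bd_B(\bar c)$ exactly when it is fixed by every automorphism that fixes $I\cup G$ and each proper face pointwise, i.e. by every $G$-valued twist of the fibers supported off the faces. Thus $B(k)$ asserts precisely that a fiber element which the $Q$-relation forces into $\acl_B(c_1,\dots,c_{k-1})$ only once the extra vertex $c_k$ is adjoined is already forced without $c_k$. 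This is a ``$\partial^2=0$''-type vanishing statement about $G$-cochains on the simplex of $I$-points, entirely parallel to Proposition~\ref{alt_2} and the defect calculation of Claim~\ref{Q_defect}.

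For part (2) I would exhibit the failure directly. Take an independent tuple $a_1,\dots,a_{n+1}\in I$ over $\acl(\emptyset)$ and any $f\in P_n(a_1,\dots,a_n)$. Using existence of horn-fillers (Definition~\ref{connected}) choose $f_i\in P_n(\{a_1,\dots,a_{n+1}\}\setminus\{a_i\})$ for $i\le n$ with $Q(f_1,\dots,f_n,f)$; since $\{a_1,\dots,a_{n+1}\}\setminus\{a_i\}=\{a_{n+1}\}\cup\{a_j: j\le n,\ j\ne i\}$, each $f_i$ lies in $\acl_{\{a_{n+1}\}}(\{a_j: j\le n,\ j\ne i\})$, and uniqueness of horn-filling (axiom~(4) of Definition~\ref{quasigroupoid}) makes $f$ definable from $f_1,\dots,f_n$, so $f\in\bd_{\{a_{n+1}\}}(a_1,\dots,a_n)\cap\acl(a_1,\dots,a_n)$. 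On the other hand, choosing a linear order making $a_1<\dots<a_n$ minimal, the twist of Corollary~\ref{automorph_stars} that adds a nonzero $g\in G$ only on the star element $(a_1,\dots,a_n)$ fixes $I\cup G$ and all $(n-1)$-faces (whose fibers are singletons), hence fixes $\bd(a_1,\dots,a_n)$ pointwise while sending $f$ to $f+g\ne f$. Therefore $f\notin\bd(a_1,\dots,a_n)$ and $B(n+1)$ fails.

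For parts (1) and (3) the content is that this cochain obstruction vanishes. For $k\le n$ there are too few independent directions: any $Q$-constraint lives on an $(n+1)$-subset $V$ of the involved $I$-points, and to force a fiber over some $w=V\setminus\{\mathrm{pt}\}$ into $\acl_B(c_1,\dots,c_{k-1})$ only after adjoining $c_k$ one needs every other $n$-face $V\setminus\{x\}$ to be covered by a proper face of the $c_1,\dots,c_{k-1},c_k$ system; distributing $n+1$ points among $k\le n$ blocks so that all these faces are covered first becomes possible at $k=n+1$, and I would make this precise by exhibiting the required extension as an explicit coboundary, as in Claim~\ref{Q_defect}. For part (3), once the base $A$ contains a point $a\in I$ every $n$-star can be taken centered at $a$ (Corollary~\ref{automorph_stars}, Proposition~\ref{isomorphisms_stars}), so the fiber-indexing complex is a cone with apex $a$; being ``contractible'' it supports no nontrivial cocycle, so every admissible twist is a coboundary and $B(k)$ holds over $A$ for \emph{all} $k$, giving $k$-uniqueness over $A$ for all $k$ by induction. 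The main obstacle is exactly this last vanishing: making rigorous that a single center point in $I$ trivializes the obstruction uniformly in $k$ (rather than only removing the one $(n+1)$-dimensional defect), while keeping the boundary bookkeeping honest when the vertices $c_i$ are closures of several $I$-points rather than single points.
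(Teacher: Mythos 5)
Your part (2) is correct and is essentially the paper's argument (in fact you are more careful than the paper: you justify $f\notin\bd(a_1,\ldots,a_n)$ by an explicit twist from Corollary~\ref{automorph_stars} rather than just asserting non-definability). The problem is parts (1) and (3), which are the real content of the proposition: what you have written there is a plan, not a proof. The entire positive direction --- that the ``cochain obstruction vanishes,'' i.e.\ that $B(k)$ holds globally for $3\le k\le n$ and over bases meeting $I$ for all $k$ --- is deferred (``I would make this precise by exhibiting the required extension as an explicit coboundary''), and you yourself flag the key vanishing statement as the unresolved obstacle. Note that $B(k)$ (Definition~\ref{Bn}) quantifies over \emph{arbitrary} independent tuples over \emph{arbitrary} algebraically closed bases, so the blocks $\acl_B(a_i)$ are closures of whole sets of $I$-points (plus imaginaries, which must be handled via Lemma~\ref{acl_homesort} and Proposition~\ref{weak_ei}); verifying $B(k)$ in this generality is exactly the bookkeeping you postpone, and it does not follow formally from Proposition~\ref{alt_2} or Claim~\ref{Q_defect}, which concern a single $(n+2)$-simplex of fibers.

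There is also a structural problem with the reduction itself in part (3). Fact~\ref{Bn_uniqueness} as stated in the paper is a \emph{global} equivalence: $(n-1)$-uniqueness plus $B(n)$ (over every set) iff $n$-uniqueness (over every set). Your induction for (3) needs a localized version --- ``$(k-1)$-uniqueness over $A$ plus $B(k)$ over $A$ implies $k$-uniqueness over $A$'' --- which the paper never states, and localization of these properties is known to be delicate (compare the Remark in Section~1 that $n$-existence over $A$ need not pass to supersets of $A$). One can likely repair this by proving $B(k)$ over every algebraically closed $B\supseteq A$ (all such $B$ contain your point $a\in I$) and re-running the proof of the equivalence, but that is additional work you have not done. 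By contrast, the paper bypasses $B(k)$ entirely for the positive directions: it takes two independent solutions of a closed independent amalgamation problem and builds an isomorphism between them directly via Proposition~\ref{isomorphisms_stars}, the key combinatorial point being that every fiber over a tuple of an $n$-star centered at a fresh point (for (1)), or at a point of the base (for (3)), automatically lies in the image of a proper-face transition map; quantifier elimination (Theorem~\ref{total_cat_qe}) then makes the star-induced isomorphism elementary. Your ``cone/contractibility'' intuition for (3) is precisely this argument, but to complete your proof you would essentially have to carry it out, at which point the detour through $B(k)$ buys nothing for (1) and (3).
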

 
\begin{proof}
(1) We will show that $T_{G,n}$ has $n$-uniqueness over any set, and note that $k$-uniqueness for $k < n$ can be proved by a similar argument. Let $\cA: \mathcal{P}^-([n]) \rightarrow \cC$ be a closed independent $n$-amalgamation problem over the algebraically closed set $B \subseteq \mathfrak{C}$ (as in the notation of Section~1), and let $\cA', \cA''$ be two independent solutions to $\cA$. If $\cA(\{i\})$ is the algebraic closure over $B$ in $T_{G,n}^{eq}$ of $m_i$ elements from $I \setminus B$ (for each $i \in [n]$), then the image of $\cA'_{\{i\}, [n]}$ is the algebraic closure over $B$ of some tuple $\overline{b}_i \subseteq I$ of size $m_i$, and the image of $\cA''_{\{i\}, [n]}$ is the algebraic closure over $B$ of a tuple $\overline{c}_i \subseteq I$ of size $m_i$, where both families $\{\overline{b}_i : i \leq n \}$ and $\{\overline{c}_i : i \leq n\}$ are pairwise disjoint. Pick some $a \in I$ which is not contained in $B$ nor in any of the tuples $\overline{b}_i$ or $\overline{c}_i$. Choose some $n$-star $\mathcal{S}$ centered at $a$ over the set $B \cup \{a\} \cup \{\overline{b}_i : i \leq n\}$ and another $n$-star $\mathcal{S}^*$ centered at $a$ over $\supp(B) \cup \{a\} \cup \{\overline{c}_i : i \leq n\}$. We arbitrarily select a solution function $s: \mathcal{S} \rightarrow P_n$ as in Proposition~\ref{isomorphisms_stars}, and note that for any $u \in \mathcal{S}$, the element $s(u)$ must be completely contained in the image of some transition map $\cA'_{v, [n]}$ for $v$ a proper subset of $[n]$. Fix some bijection $$\varphi : \supp(B) \cup \{a\} \cup \{\overline{b}_i : i \leq n \} \rightarrow \supp(B) \cup \{a\} \cup \{\overline{c}_i : i \leq n\}$$ which fixes $\supp(B) \cup \{a\}$ pointwise. Then $\mathcal{S}$ is transferred via $\varphi$ to some $n$-star $\mathcal{S}^*$ centered at $a$ over the set $\supp(B) \cup \{a\} \cup \{\overline{c}_i : i \leq n \}$, and there is a unique solution function $s^* : \mathcal{S}^* \rightarrow P_n$ such that for any $u \in \mathcal{S}$, there is some proper subset $v$ of $[n]$ and some $f \in P_n$ such that $s(u) = \cA'_{v,[n]}(f)$ and $s^*(\varphi(u)) = \cA''_{v, [n]}(f)$. By Proposition~\ref{isomorphisms_stars}, we can construct a unique isomorphism $$\chi : \cl(B \cup \{a\} \cup \{\overline{b}_i : i \leq n\}) \rightarrow \cl(B \cup \{a\} \cup \{\overline{c}_i : i \leq n \})$$ extending $\varphi$ and the identity map on $G$ and sending $s$ to $s^*$, and $\chi$ is an elementary map since $T_{G,n}$ eliminates quantifiers. This gives the desired isomorphism between $\cA'$ and $\cA''$.


(2) Let $(a_1, \ldots, a_{n+1})$ be a sequence of distinct elements of $I(\mathfrak{C})$, which form an independent set (by Lemma~\ref{independence}), then pick any elements $f_1, \ldots, f_{n+1}$ such that $f_i \in P(a_1, \ldots, \widehat{f_i}, \ldots, f_{n+1})$ and $Q(f_1, \ldots, f_{n+1})$ holds, and clearly $f_i \in \acl(\overline{a}_{\neq i}) \setminus \dcl(\overline{a}_{\neq i})$, so $T$ does not satisfy the property $B(n+1)$ over $\acl(\emptyset)$.

(3) To show $k$-uniqueness over $A$ with $a \in A \cap I$, consider two solutions $\cA'$ and $\cA''$ to the same $k$-amalgamation problem $\cA$, and we repeat essentially the same argument as in the proof of (1), only this time we use $n$-stars $\mathcal{S}$ and $\mathcal{S}^*$ over the given point $a$. Then $$\cA'([k]) = \cl (\{s(u) : u \in \mathcal{S} \})$$ and any $s(u)$ for $u \in \mathcal{S}$ is automatically in the image of some transition map $\cA'_{v, [k]}$ for some $v \subset_{k-1} [k]$, so by Proposition~\ref{isomorphisms_stars} we can build the desired isomorphism from $\cA'([k])$ to $\cA''([k])$.

\end{proof}

This concludes our discussion of models of $T_{G,n}$, which establishes that there do indeed exist connected $n$-ary polygroupoids for every $n \geq 2$ definable in theories that are very well-behaved ($\omega$-categorical and $\omega$-stable), and that for any finite nontrivial abelian $G$ we can find such a structure on which $G$ acts regularly (in the sense of Definition~\ref{group_action} above). It seems that it is possible to construct more complicated examples of $n$-ary polygroupoids, for example in which the sorts $P_k$ for $k \in \{2, \ldots, n-1\}$ have more than one fiber above every tuple in $I^{(k)}$, or even with more complicated ``inverse maps'' $\iota_\sigma : P(a_1, \ldots, a_n) \rightarrow P(a_{\sigma(1)}, \ldots, P_{\sigma(n)})$ such that $\iota^2_{(12)} \neq \id$ for the transposition $(12)$. We will leave a detailed study of all these possibilities for a future paper.

\end{document}